\newtheorem{theorem}{Theorem}
\newtheorem{corollary}[theorem]{Corollary}
\newtheorem{definition}[theorem]{Definition}
\newtheorem{example}[theorem]{Example}
\newtheorem{lemma}[theorem]{Lemma}
\newtheorem{proposition}[theorem]{Proposition}
\newtheorem{remark}[theorem]{Remark}
\newenvironment{proof}[1][Proof]{\noindent\textbf{#1.} }{\ \rule{0.5em}{0.5em}}
\begin{document}

\title{Connections between commutative rings and some algebras of logic}
\author{Cristina Flaut and Dana Piciu}
\date{}
\maketitle

\begin{abstract}
In this paper using the connections between some subvarieties of residuated
lattices, we investigated some properties of the lattice of ideals in
commutative and unitary rings. We give new characterizations for commutative
rings $A$ in which $Id(A)$ is an MV-algebra, a Heyting algebra or a Boolean
algebra and we establish connections between these types of rings. We are
very interested in the finite case and we present summarizing statistics. We
show that the lattice of ideals in a finite commutative ring of the form $A=%
\mathbb{Z}_{k_{1}}\times \mathbb{Z}_{k_{2}}\times ...\times \mathbb{Z}%
_{k_{r}},$ where $k_{i}=p_{i}^{\alpha _{i}}$ and $p_{i}$ a prime number, for
all $i\in \{1,2,...,r\},$ \ is a Boolean algebra or an MV-algebra (which is
not Boolean).

Using this result we generate the binary block codes associated to the
lattice of ideals in finite commutative rings and we present a new way to
generate all (up to an isomorphism) finite MV-algebras using rings.

\textbf{Keywords:} commutative ring, ideal, BCK-algebra, residuated lattice,
MV-algebra, Boolean algebra, Heyting algebra, Chang property, block codes.

\textbf{AMS Subject Classification 2010:} 03G10, 03G25, 06A06, 06D05, 08C05,
06F35.
\end{abstract}

\section{\textbf{Introduction}}

Residuated lattices were introduced by Dilworth and Ward, through the papers
[Di; 38], [WD; 39].\medskip\ The study of residuated lattices is originated
in 1930 in the context of theory of rings, with the study of ring ideals. It
is known that the lattice of ideals of a commutative and unitary ring is a
residuated lattice. Based on this result, many researchers ([BN; 09], [Bl;
53], [CL; 19], [Pi; 07] and [TT; 22]) have been interested in this
construction.

In this paper, using the connections between some subvarieties of residuated
lattices, we investigated properties of the lattice of ideals in commutative
and unitary rings. We are very interested in the finite case. An argument
for the importance of this case comes from computational consideration.

In general, a solution that is computational tractable consists in
considering algebras with a reasonable small number of elements.

This makes us study in this paper, finite residuated lattices and finite
rings.

In whole this
paper, by a finite commutative unitary ring, we will understand a finite
ring of the form $A=\mathbb{Z}_{k_{1}}\times \mathbb{Z}_{k_{2}}\times
...\times \mathbb{Z}_{k_{r}},$ where $k_{i}=p_{i}^{\alpha _{i}}$ and $p_{i}$
a prime number, for all $i\in \{1,2,...,r\}.$ The paper is organized as follows:

Section 2 contains basic properties and definitions that we use in the paper.

In Section 3 we give new characterization for commutative rings for which
the lattice of ideals is an MV-algebra (Theorem \ref{Theorem 3.4},
Corollaries \ref{Corollary 3.6}, \ref{Corollary 3.8} and \ref{Corollary 3.13}%
), a Heyting algebra (Theorem \ref{Theorem 3.18}) or a Boolean algebra
(Corollaries \ref{Corollary 3.23}, \ref{Corollary 3.25}, \ref{Corollary 3.32}
and \ref{Corollary 3.34}). Also, we establish new connections between these
rings (Corollaries \ref{Corollary 3.23} and \ref{Corollary 3.34}).

We introduce the notion of Chang ring, as commutative unitary ring which
satisfies Chang property: $I+J=(J:(J:I)),$ for every $I,J\in Id(A)$ and we
show that any finite commutative ring has Chang property, that is, its
lattice of ideals is a Boolean algebra or an MV-algebra (which is not
Boolean), see Corollary \ref{Corollary 3.30}.

Also, we prove that if $A$ is a commutative ring, then $Id(A)$\textit{\ }is
a Boolean algebra if and only if\textit{\ }$A$ is a Von Neumann regular ring
satisfying Chang property, or equivalent, if and only if, \textit{\ }$A$ is
a Von Neumann regular ring in which $Ann(Ann(I))=I,$ for every $I\in Id(A),$
see Corollaries \ref{Corollary 3.25} and \ref{Corollary 3.34}$\mathit{.}$
Moreover, if $A$\ is finite, then $Id(A)$\ is a Boolean algebra if and only
if $A$ is a Von Neumann regular ring, see Corollary \ref{Corollary 3.25}.

One of recent applications of residuated lattices is given by Coding Theory.
In Section 4, using the results obtained in Section 3, we describe the
binary block codes associated to the lattice of ideals in a finite
commutative ring with $n$ ideals. In particular, we construct these binary
block codes for $n=4,6,8.$

In Section 5, we present a new way to generate all (up to an isomorphism)
finite MV-algebras using commutative rings and we present summarizing
statistics.

\section{\textbf{Preliminaries}}

Let $A$ be a commutative unitary ring.

The set $Id\left( A\right) $ denotes the set of all ideals of the ring $A$.

We denote by $<x>$ \ the ideal of $A$ generated by $x\in A$.

Let $I,J\in Id\left( A\right) $. The following sets are also ideals in
the ring $A$:
\begin{equation*}
I+J=<I\cup J>=\{i+j,i\in I,j\in J\}\text{, the sum of two ideals = the ideal
generated by }I\cup J\text{;}
\end{equation*}%
\begin{equation*}
I\otimes J=\{\underset{i=1}{\overset{n}{\sum }}f_{i}g_{i},f_{i}\in
I,g_{i}\in J\}\text{, the product of two ideals;}
\end{equation*}%
\begin{equation*}
\left( I:J\right) =\{x\in A,x\cdot J\subseteq I\}\text{, the quotient of two
ideals;}
\end{equation*}%
\begin{equation*}
Ann\left( I\right) =\left( \mathbf{0}:I\right) \text{, the annihilator of
the ideal }I\text{,}
\end{equation*}%
where\textbf{\ }$\mathbf{0}=<0>.$

\begin{remark}
\label{Remark 2.1} ([BP; 02] ) Let $A$ be a commutative unitary ring and $%
I,J,K\in Id\left( A\right) $. Then the following hold:

1) $I\otimes J\subseteq I+J;$

2) $Ann\left( I\right) =\{x\in A,x\cdot I=0\}=\{x\in A,x\cdot i=0,$ for all $%
i\in I\}$;

3) $\underset{x\in A,x\neq 0}{\bigcup Ann(}<x>)=$ the set of all zero
divisors of the ring $A;$

4) $\ Ann\left( \mathbf{0}\right) =A$ and \ $Ann\left( A\right) =\mathbf{0}$;

5) $\ Ann\left( I+J\right) =Ann\left( I\right) \cap Ann\left( J\right) $;

6) $\left( A:I\right) =A$, $\left( I:A\right) =I~$and $\left( I:I\right) =A$.

7) $\ I\subseteq \left( I:J\right) ;$

8) $\left( I:J\right) \otimes J\subseteq I;$

9) $\ I\subseteq J\Leftrightarrow \left( J:I\right) =A$;

10) $\ ((I:J):K)=\ (I:(J\otimes K))=\ ((I:K):J);$

11) $\ (K:(I+J))=\ (K:I)\cap (K:J).$
\end{remark}

\begin{definition}
\label{Definition 2.2} ([BP; 02] ) Let $A$ be a commutative unitary ring and
$I,J\in Id\left( A\right) $. \ The ideals $I$ and $J$ are called \textit{%
coprime} if $I+J=A$, that means there are $i\in I$, $j\in J$ such that $%
i+j=1 $.
\end{definition}

\begin{remark}
\label{Remark 2.3}

1) For a commutative ring $A$, if $I,J\in Id\left( A\right) $ are coprime
ideals, then $I\otimes J=I\cap J$, see [BP; 02], Prop. 2.19.

2) If an ideal $I$ is coprime with $Ann\left( I\right) $, therefore $I\cap
Ann\left( I\right) =\{0\}$. Indeed, since $I\cap Ann\left( I\right)
=I\otimes Ann\left( I\right) $, if $x\in I\cap Ann\left( I\right) $, we have
$x=\underset{i=1}{\overset{n}{\sum }}f_{i}g_{i},~f_{i}\in I,g_{i}\in
Ann\left( I\right) $, therefore $x=0$.
\end{remark}

\begin{definition}
\label{Definition 2.4} ([BD;74]) The partially ordered set $\left( \mathcal{L%
},\leq \right) $ is a \textit{lattice} if for each two elements$~x,y\in
\mathcal{L}$ their supremum and infimum elements exist, denoted by%
\begin{equation*}
sup\{x,y\}=x\vee y\text{ and }inf\{x,y\}=x\wedge y.
\end{equation*}

The lattice $\left( \mathcal{L},\leq \right) $ is a \textit{distributive
lattice } if for each elements $x,y,z\in \mathcal{L}$ we have the following
relation:%
\begin{equation*}
x\wedge \left( y\vee z\right) =(x\wedge y)\vee (x\wedge z).
\end{equation*}

A lattice $\left( \mathcal{L},\leq \right) $ is a \textit{bounded lattice}
if there are the elements $0$ and $1$, the least element in $\mathcal{L}$,
respectively, the greatest element in $\mathcal{L}.$

With the above notations, for a lattice $\left( \mathcal{L},\leq \right) $
an element $x\in \mathcal{L}$ has a \textit{complement }if there is an
element $y\in \mathcal{L}$ satisfying the following relations:
\begin{equation*}
x\vee y=1~\text{and}~x\wedge y=0.
\end{equation*}%
In this situation, the element $x$ is called \textit{complemented}. A
complement of an element is not unique, but, if $\left( \mathcal{L},\leq
\right) $ is distributive, then each element has at most a complement.

The lattice $\left( \mathcal{L},\leq \right) $ is a \textit{complemented
lattice} if it is a bounded lattice and each element $x\in \mathcal{L}$ has
a complement.
\end{definition}

\begin{definition}
\label{Definition 2.5}\ ([BD;74]) An algebra $\left( \mathcal{B},\vee
,\wedge ,^{\prime },0,1\right) $ $~$is called a \textit{Boolean algebra} if $%
\left( \mathcal{B},\vee ,\wedge ,0,1\right) ~$is a bounded distributive
lattice and a complemented lattice in which
\begin{equation*}
b\vee b^{\prime }=1\text{ and }b\wedge b^{\prime }=0,
\end{equation*}%
for all elements $b\in \mathcal{B}$.
\end{definition}

\begin{definition}
\label{Definition 2.6}\ ([I; 09]) A \textit{BCK-algebra} is a structure $%
(X,\leq ,\rightarrow ,1)$ where $(X,\leq )$ is a poset with a greatest
element $1$ and $\rightarrow $ is a binary operation on $L$ such that:

BCK1) $x\rightarrow y\leq (y\rightarrow z)\rightarrow (x\rightarrow z);$

BCK2) $x\leq (x\rightarrow y)\rightarrow y;$

BCK3) $\ \ x\leq y$ iff $x\rightarrow y=1,$ for every $x,y,z\in X.$
\end{definition}

Obviously, by a bounded BCK-algebra we mean a BCK-algebra with a least
element $0.$

For other details regarding BCK-algebras, the readers are referred to [AAT;
96], [Me-Ju; 94].\medskip

Fundamental examples of BCK-algebras come from algebras of logic:

\begin{definition}
\label{Definition 2.7} ([Di; 38], [WD; 39])\textbf{\ \ }A \emph{residuated
lattice} \ is an algebra $(L,\wedge ,\vee ,\odot ,\rightarrow ,0,1)$
equipped with an order $\leq $\ satisfying the following axioms:

LR1) $\ (L,\wedge ,\vee ,0,1)$ is a bounded lattice;

LR2) $\ \ (L,\odot ,1)$ is a commutative ordered monoid;

LR3) $\ \odot $ and $\rightarrow $\ form an \textit{adjoint pair}, i.e., $%
z\leq x\rightarrow y$\ iff $x\odot z\leq y,$\ for all $x,y,z\in L$.
\end{definition}

It is obviously that if $(L,\vee ,\wedge ,\odot ,\rightarrow ,0,1)$ is a
residuated lattice, then $(L,\rightarrow ,0,1)$ is a bounded BCK-algebra.

\begin{remark}
\label{Remark 2.8} ([T; 99]) Let $(\mathcal{B},\wedge ,\vee ,^{\prime },0,1)$
be a \emph{Boolean algebra}. \ If we define for every $x,y\in \mathcal{B}%
,x\odot y=x\wedge y$ and $x\rightarrow y=x^{\prime }\vee y,$ then $(\mathcal{%
B},\wedge ,\vee ,\odot ,\rightarrow ,0,1)$ becomes a residuated lattice.
\end{remark}

\begin{proposition}
\label{Proposition 3.33} ([BD; 74], [COM; 00])\textit{\ Let }$(L,\vee
,\wedge ,\odot ,\rightarrow ,0,1)$\textit{\ be a residuated lattice. The
following assertions are equivalent:}

\textit{(i) }$\ x^{2}=x$ and \textit{\ }$x^{\ast \ast }=x,$\textit{\ for
every \thinspace }$x\in L;$

(ii) $\ x^{2}=x$ \textit{for every \thinspace }$x\in L$ and \textit{\ }$[$%
for $y\in L,$ $y^{\ast }=0\Rightarrow y=1]$;

\textit{(iii) }$\ \ x\vee x^{\ast }=1,$ \textit{\ for every \thinspace }$%
x\in L.$
\end{proposition}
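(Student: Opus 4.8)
The plan is to establish the cycle $(i)\Rightarrow(ii)\Rightarrow(iii)\Rightarrow(i)$, writing $x^{\ast}=x\rightarrow 0$ and $x^{2}=x\odot x$ throughout. First I would assemble the standard residuated-lattice facts that carry the argument: $x\odot x^{\ast}=0$ and $x\leq x^{\ast\ast}$ (both immediate from the adjunction LR3), $0^{\ast}=1$ (from BCK3, since $0\leq 0$), the distributivity $x\odot(y\vee z)=(x\odot y)\vee(x\odot z)$, the De~Morgan identity $(y\vee z)^{\ast}=y^{\ast}\wedge z^{\ast}$ (a case of $(y\vee z)\rightarrow w=(y\rightarrow w)\wedge(z\rightarrow w)$, compare Remark \ref{Remark 2.1}.11), and the inequality $a\odot b\leq a\wedge b$ (from $a\odot b\leq a\odot 1=a$ and the symmetric bound). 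The one conceptual observation behind everything is that the hypothesis $x^{2}=x$ for all $x$ forces $\odot$ to coincide with $\wedge$: from $x\wedge y\leq x$ and $x\wedge y\leq y$ one gets $(x\wedge y)^{2}\leq x\odot y\leq x\wedge y$, so idempotency gives $x\odot y=x\wedge y$. I would record this equivalence first and then use it freely.

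The step $(i)\Rightarrow(ii)$ is essentially free: idempotency is common to both statements, and if $y^{\ast}=0$ then applying $x^{\ast\ast}=x$ at $x=y$ gives $y=y^{\ast\ast}=0^{\ast}=1$. For $(ii)\Rightarrow(iii)$ the idea is to feed $x\vee x^{\ast}$ into the implication $[y^{\ast}=0\Rightarrow y=1]$: using De~Morgan and then $\odot=\wedge$ (legitimate since (ii) assumes idempotency), I compute $(x\vee x^{\ast})^{\ast}=x^{\ast}\wedge x^{\ast\ast}=x^{\ast}\odot x^{\ast\ast}=0$, the last equality being $a\odot a^{\ast}=0$ at $a=x^{\ast}$; hence $x\vee x^{\ast}=1$.

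For $(iii)\Rightarrow(i)$ I would extract both conclusions from $x\vee x^{\ast}=1$ by applying $\odot$ and distributing. Multiplying by $x$: $x=x\odot(x\vee x^{\ast})=x^{2}\vee(x\odot x^{\ast})=x^{2}\vee 0=x^{2}$, which is idempotency. Multiplying instead by $x^{\ast\ast}$: $x^{\ast\ast}=x^{\ast\ast}\odot(x\vee x^{\ast})=(x^{\ast\ast}\odot x)\vee 0=x^{\ast\ast}\odot x\leq x$, where $x^{\ast\ast}\odot x^{\ast}=0$ is again $a\odot a^{\ast}=0$ at $a=x^{\ast}$ and the final inequality is $a\odot b\leq b$; combined with the always-valid $x\leq x^{\ast\ast}$, this yields $x^{\ast\ast}=x$.

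I expect the only genuinely delicate point to be the $x^{\ast\ast}=x$ half of $(iii)\Rightarrow(i)$. The temptation is to argue informally that ``a Heyting algebra satisfying $x\vee x^{\ast}=1$ is Boolean, hence involutive,'' but I would instead give the short direct residuation computation above, taking care that each identity invoked ($a\odot a^{\ast}=0$ at $a=x^{\ast}$, distributivity of $\odot$ over $\vee$, and $a\odot b\leq b$) is valid in an arbitrary residuated lattice and does not covertly presuppose the conclusion.
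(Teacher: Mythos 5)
Your proof is correct, but there is nothing in the paper to compare it against: Proposition \ref{Proposition 3.33} is stated as a known result, imported from the references [BD; 74] and [COM; 00], and the paper gives no proof of it. What your argument supplies is therefore a self-contained verification of a fact the paper outsources to the literature. Checking the details: all the preliminary facts you list ($x\odot x^{\ast}=0$, $x\leq x^{\ast\ast}$, $0^{\ast}=1$, distributivity of $\odot$ over $\vee$, the law $(y\vee z)\rightarrow w=(y\rightarrow w)\wedge(z\rightarrow w)$, and $a\odot b\leq a\wedge b$) do hold in an arbitrary residuated lattice and follow from the adjunction exactly as you indicate; the key observation that global idempotency forces $\odot=\wedge$ is sound (from $(x\wedge y)^{2}\leq x\odot y\leq x\wedge y$); and each implication of the cycle goes through. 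In particular the delicate half of $(iii)\Rightarrow(i)$ is handled cleanly: from $x^{\ast\ast}=x^{\ast\ast}\odot(x\vee x^{\ast})=(x^{\ast\ast}\odot x)\vee(x^{\ast\ast}\odot x^{\ast})=x^{\ast\ast}\odot x\leq x$, together with the universally valid $x\leq x^{\ast\ast}$, one gets $x^{\ast\ast}=x$ without ever invoking anything beyond the residuated-lattice axioms, so no circularity enters. The only stylistic remark is that the $\odot=\wedge$ lemma is needed only in $(ii)\Rightarrow(iii)$ (and even there one could instead argue $(x^{\ast}\wedge x^{\ast\ast})^{2}\leq x^{\ast}\odot x^{\ast\ast}=0$ directly from idempotency); the other two implications use only distributivity and $a\odot a^{\ast}=0$, which your write-up already reflects.
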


In a residuated lattice $(L,\wedge ,\vee ,\odot ,\rightarrow ,0,1)$ we
consider the identities:

\begin{equation*}
(div)\qquad x\odot (x\rightarrow y)=x\wedge y\qquad \text{ (\textit{%
divisibility)}};
\end{equation*}%
\begin{equation*}
(DN)\qquad x^{\ast \ast }=x\qquad \text{ (\textit{double negation condition)}%
}.
\end{equation*}

\begin{definition}
\label{Definition 2.8.1} ([I; 09], [T; 99]) \ The residuated lattice $L$ is
called:

(i) \emph{divisible}\textit{\ }if $L$ verifies $(div);$

(ii) \emph{involutive} or \emph{Girard monoid} if $L$ verifies $(DN).$
\end{definition}

\begin{definition}
\label{Definition 2.9} ([CHA; 58], [Mu; 07]) An abelian monoid $\left(
M,\oplus ,0\right) $ is called an \textit{MV-algebra} if we have an unary
operation $^{\ast }$ on $M$ such that:

MV1) $(x^{\ast })^{\ast }=x;$

MV2) $x\oplus 0^{\ast }=0^{\ast };$

MV3) $\left( x^{\ast }\oplus y\right) ^{\ast }\oplus y=$ $\left( y^{\ast
}\oplus x\right) ^{\ast }\oplus x$, for all $x,y\in M.$
\end{definition}

Usually, we denote an MV-algebra by $\left( M,\oplus ,^{\ast },0\right) .$

In an MV-algebra $M,$ the constant element $0^{\ast }$ is denoted with $1$,
that means

\begin{equation*}
1=0^{\ast },
\end{equation*}%
and the following auxiliary operations are also defined:

\begin{equation*}
x\odot y=\left( x^{\ast }\oplus y^{\ast }\right) ^{\ast }\text{ and \ }%
x\rightarrow y=x^{\ast }\oplus y,
\end{equation*}

for every $x,y\in M,$ see [Mu; 07].

In\textbf{\ }[COM; 00], for an MV-algebra $\left( M,\oplus ,^{\ast
},0\right) $ and $x,y\in M$ \ is defined the following order relation:%
\begin{equation*}
x\leq y\text{ if and only if }x^{\ast }\oplus y=1.
\end{equation*}

We recall that, the natural order $\leq $ determines on $M$ a bounded
distributive lattice structure in which for $x,y\in M,$ the join $x\vee y$
and the meet $x\wedge y$ are given by:
\begin{eqnarray*}
x\vee y &=&(x\odot y^{\ast })\oplus y=(y\odot x^{\ast })\oplus x\text{ \ \ \
\ \ \ \ \ \ and } \\
x\wedge y &=&(x^{\ast }\vee y^{\ast })^{\ast }=x\odot (x^{\ast }\oplus
y)=y\odot (y^{\ast }\oplus x).\text{ }
\end{eqnarray*}

MV-algebras are particular cases of residuated lattices. If $\left( M,\oplus
,^{\ast },0\right) $ is an MV-algebra, then using the auxiliary operations
defined above, $(M,\wedge ,\vee ,\odot ,\rightarrow ,0,1)$ becomes a
residuated lattice.

Moreover, if $(L,\wedge ,\vee ,\odot ,\rightarrow ,0,1)$ is a residuated
lattice and for $x,y\in L$ we define $x\oplus y=x^{\ast }\rightarrow y$ \
(equivalent with $x\oplus y=(x^{\ast }\odot y^{\ast })^{\ast }),$ then $%
(L,\oplus ,^{\ast },0)$ is an $MV$ algebra iff
\begin{equation*}
(x\rightarrow y)\rightarrow y=(y\rightarrow x)\rightarrow x,
\end{equation*}%
for every $x,y\in L,$ see [T; 99].

In fact, MV-algebras are commutative bounded BCK-algebras, see [I; 09] and
[Me-Ju; 94].\medskip

\begin{definition}
\label{Definition 2.10}([COM; 00]) An algebra $\left( W,\rightarrow ,^{\ast
},1\right) $ of type $\left( 2,1,0\right) ~$is called a \textit{Wajsberg
algebra }if the following axioms are verified:

W1) $1\rightarrow x=x;$

W2) $\left( x\rightarrow y\right) \rightarrow \left[ \left( y\rightarrow
z\right) \rightarrow \left( x\rightarrow z\right) \right] =1;$

W3) $\left( x\rightarrow y\right) \rightarrow y=\left( y\rightarrow x\right)
\rightarrow x;$

W4) $\left( x^{\ast }\rightarrow y^{\ast }\right) \rightarrow \left(
y\rightarrow x\right) =1,$ for every $x,y,z\in W.$
\end{definition}

We remark that if $\left( W,\rightarrow ,^{\ast },1\right) $ is a Wajsberg
algebra and if on $W$ is defined the following binary relation
\begin{equation*}
x\leq y~\text{if~and~only~if~}x\rightarrow y=1,
\end{equation*}%
then, this relation is an order relation, called \textit{the natural order
relation on }$W\medskip $\textit{\ }(see [FRT; 84]).

\begin{remark}
\label{Remark 2.11}([COM; 00], Lemma 4.2.2 and Theorem 4.2.5)

i) If $\left( W,\rightarrow ,^{\ast },1\right) $ is a Wajsberg algebra,
defining
\begin{equation*}
x\odot y=\left( x\rightarrow y^{\ast }\right) ^{\ast }\text{ and \ }x\oplus
y=x^{\ast }\rightarrow y,
\end{equation*}%
for all $x,y\in W$, we obtain that $\left( W,\oplus ,\odot ,^{\ast
},0,1\right) $ is an MV-algebra.

ii) If $\left( M,\oplus ,\odot ,^{\ast },0,1\right) $ is an MV-algebra,
defining on $M$ \ the operation%
\begin{equation*}
x\rightarrow y=x^{\ast }\oplus y,
\end{equation*}%
it results that $\left( X,\rightarrow ,^{\ast },1\right) $ is a Wajsberg
algebra.
\end{remark}

\begin{remark}
\label{Remark 2.12}([I; 09]) A residuated lattice $(L,\wedge ,\vee ,\odot
,\rightarrow ,0,1)$ with Chang condition:%
\begin{equation*}
(C):\text{ }x\vee y=(x\rightarrow y)\rightarrow y
\end{equation*}%
for every $x,y\in L,$ is an equivalent definition of \ Wajsberg algebra.
\end{remark}

\section{Some remarks regarding the lattice of ideals in a commutative ring}

It is known that if\textbf{\ } $A$ is a commutative unitary ring, then $%
(Id(A),\cap ,+,\otimes \rightarrow ,0=\{0\},1=A)$ is a residuated lattice in
which the order relation is $\subseteq $ and $I\rightarrow J=(J:I),$ for
every $I,J\in Id(A)$, see [TT; 22].

Thus, since $\otimes $ and $\rightarrow $ form and adjoint pair, LR3)
becomes
\begin{equation*}
\ I\otimes J\subseteq K\text{ \ iff \ }I\subseteq (K:J),\text{ \ for every \
}I,J,K\in Id(A).
\end{equation*}

\begin{proposition}
\label{Proposition 3.1} \textit{\ Let }$A$\textit{\ be a commutative unitary
ring. \ Then }$(Id(A),\subseteq ,\rightarrow ,1=A)$\textit{\ is a bounded
BCK-algebra in which }$I\rightarrow J=(J:I),$\textit{\ for every }$I,J\in
Id(A).$
\end{proposition}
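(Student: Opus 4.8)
The plan is to verify directly the three defining axioms of a BCK-algebra from Definition \ref{Definition 2.6}, together with boundedness, using only the arithmetic of ideal quotients collected in Remark \ref{Remark 2.1} and the adjunction LR3 in its ideal form $X\otimes Y\subseteq Z\Leftrightarrow X\subseteq(Z:Y)$. One could instead simply invoke the fact recorded just after Definition \ref{Definition 2.7}, that every residuated lattice yields a bounded BCK-algebra on $(L,\rightarrow,0,1)$: since $(Id(A),\cap,+,\otimes,\rightarrow,\mathbf{0},A)$ is a residuated lattice, the claim is then immediate. I prefer the self-contained route because it isolates exactly which ideal identities are needed.

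First I would settle the order-theoretic frame: $(Id(A),\subseteq)$ is evidently a poset, $A$ is its greatest element, and $\mathbf{0}=\{0\}$ is its least element, so the structure is bounded with top $1=A$ and bottom $0=\mathbf{0}$. Translating the operation $I\rightarrow J=(J:I)$ into BCK3 then asks for $I\subseteq J$ iff $(J:I)=A$, which is precisely Remark \ref{Remark 2.1}(9); this disposes of BCK3 with no further work.

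For BCK2, written in ideal form as $I\subseteq (J:(J:I))$, I would apply the adjunction to reduce the claim to $I\otimes (J:I)\subseteq J$; by commutativity of $\otimes$ this is $(J:I)\otimes I\subseteq J$, which is Remark \ref{Remark 2.1}(8). For BCK1, the statement $I\rightarrow J\subseteq (J\rightarrow K)\rightarrow(I\rightarrow K)$ unwinds to $(J:I)\subseteq ((K:I):(K:J))$. Applying adjointness once reduces this to $(J:I)\otimes(K:J)\subseteq (K:I)$, and applying it a second time against the factor $I$ reduces it further to $(J:I)\otimes(K:J)\otimes I\subseteq K$. Here I would use commutativity and associativity of $\otimes$ to group it as $(K:J)\otimes\bigl((J:I)\otimes I\bigr)$, then bound $(J:I)\otimes I\subseteq J$ by Remark \ref{Remark 2.1}(8), use monotonicity of $\otimes$ to get $(K:J)\otimes J$, and finish with $(K:J)\otimes J\subseteq K$, again Remark \ref{Remark 2.1}(8).

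The one genuine obstacle is bookkeeping rather than depth: because $I\rightarrow J=(J:I)$ reverses the two arguments, one must be scrupulous about which ideal plays the role of the numerator and which the denominator at each use of adjointness, and BCK1 in particular requires chaining the single containment $(X:Y)\otimes Y\subseteq X$ twice while invoking commutativity, associativity and monotonicity of $\otimes$ (all guaranteed by the monoid part LR2 of the residuated-lattice structure). Once the translations are written out correctly, each axiom collapses to one or two lines.
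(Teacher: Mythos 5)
Your proof is correct and follows essentially the same route as the paper: both verify BCK1--BCK3 directly, using Remark \ref{Remark 2.1}(8) and (9) together with the adjointness LR3 of the residuated lattice $(Id(A),\cap,+,\otimes,\rightarrow,\mathbf{0},A)$. The only cosmetic difference is that for BCK1 the paper argues bottom-up (starting from $(I:J)\otimes J\subseteq I$, tensoring with $(K:I)$, then moving factors across by adjointness), while you run the same chain top-down by reducing the goal via adjointness first; the ingredients and the logical content are identical.
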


\begin{proof}
To prove BCK1), using Remark \ref{Remark 2.1}(8), since \ $(I:J)\otimes
J\subseteq I$ we deduce that \ $(I:J)\otimes J\otimes (K:I)\subseteq I$ $%
\otimes (K:I)\subseteq K.$ Since $(Id(A),\cap ,+,\otimes \rightarrow
,0=\{0\},1=A)$ is a residuated lattice we deduce that $(I:J)\otimes
(K:I)\subseteq (K:J)$ and finally, $(I:J)\subseteq ((K:J):(K:I)).$

Now, using LR3) and Remark \ref{Remark 2.1} (8) we \ have \ $(J:I)\otimes
I\subseteq J$ iff $I\subseteq (J:(J:I)),$ so BCK2) holds.

The condition BCK3) follows from Remark \ref{Remark 2.1} (9).
\end{proof}

\begin{proposition}
\label{Proposition 3.2} \textit{Let }$A$\textit{\ be a commutative unitary
ring and }$I,J\in Id(A).$\textit{\ Then:}

\textit{(i) }$\ \ (J:(J:(J:I)))=(J:I);$

\textit{(ii) }$\ \ (I:J)\subseteq (Ann(J):Ann(I));$

\textit{(iii) }$\ \ I\otimes Ann(I)=\mathbf{0};$

\textit{(iv) }$\ \ I\otimes J=\mathbf{0}$\textit{\ iff }$I\subseteq Ann(J);$

\textit{(v) }$\ \ \ Ann(I\otimes J)=(Ann(J):I)=(Ann(I):J);$

\textit{(vi) }$\ \ \ I$\textit{\ and }$J$\textit{\ coprime ideals implies }$%
I^{n}$\textit{\ and }$J^{n}$\textit{\ coprime, for every }$n\geq 1.$
\end{proposition}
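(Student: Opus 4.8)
The plan is to prove each of the six assertions by reducing them to the basic ideal identities collected in Remark \ref{Remark 2.1}, together with the adjunction LR3) rewritten as $I\otimes J\subseteq K$ iff $I\subseteq (K:J)$. The unifying observation is that all six statements are really statements about the residuated-lattice structure on $Id(A)$, where $\otimes$ is ideal product, $\rightarrow$ is the quotient $(J:I)$, and $Ann(I)=(\mathbf{0}:I)$ is negation. Several parts are instances of identities that hold in any residuated lattice; only a couple require genuine ring-theoretic input.

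First I would dispatch (iii) and (iv), which are the foundation for the rest. For (iii), note that $I\otimes Ann(I)=I\otimes(\mathbf{0}:I)$, and by Remark \ref{Remark 2.1}(8) with $J$ replaced by $I$ and $I$ by $\mathbf{0}$ we get $(\mathbf{0}:I)\otimes I\subseteq\mathbf{0}$, hence equality since $\mathbf{0}$ is the least element. For (iv), I would use LR3) directly: $I\otimes J\subseteq\mathbf{0}$ iff $I\subseteq(\mathbf{0}:J)=Ann(J)$, which is immediate once one recognizes $\mathbf{0}$ as the bottom. With (iv) in hand, (v) follows by a chain of quotient identities: using Remark \ref{Remark 2.1}(10), $Ann(I\otimes J)=(\mathbf{0}:(I\otimes J))=((\mathbf{0}:J):I)=(Ann(J):I)$, and by symmetry of $\otimes$ the same equals $(Ann(I):J)$.

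For (i), I would exploit that in any BCK-algebra the negation operation satisfies $x^{***}=x^{*}$; concretely, applying the associativity identity (10) and the fact that $J\subseteq(J:(J:I))$ from BCK2) (Proposition \ref{Proposition 3.1}), one shows $(J:I)\subseteq(J:(J:(J:I)))$ and the reverse inclusion simultaneously, pinning down equality. For (ii), I would take $x\in(I:J)$, so $x\cdot J\subseteq I$, and show $x\cdot Ann(I)\subseteq Ann(J)$: given $a\in Ann(I)$, for any $y\in J$ we have $xy\in I$, hence $a(xy)=0$, i.e. $(xa)y=0$, so $xa\in Ann(J)$. This is the one part where I would argue with elements rather than formally. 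For (vi), I would use that coprimeness $I+J=A$ means $1=i+j$ with $i\in I$, $j\in J$, and then expand $1=(i+j)^{2n-1}$ (or $1=1^{k}$ for suitable $k$) by the binomial theorem, observing that every term lies in $I^{n}$ or in $J^{n}$, so $I^{n}+J^{n}=A$.

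The main obstacle I anticipate is part (vi): unlike the others it is not a soft residuated-lattice manipulation but requires the correct binomial exponent to guarantee each monomial falls into $I^{n}$ or $J^{n}$. The clean choice is to raise $1=i+j$ to the power $2n-1$, since in $\sum\binom{2n-1}{k}i^{k}j^{2n-1-k}$ each term has either $k\geq n$ (so it lies in $I^{n}$) or $2n-1-k\geq n$ (so it lies in $J^{n}$); verifying this dichotomy is the only step needing care. The remaining parts are routine once the adjunction and the quotient identities of Remark \ref{Remark 2.1} are invoked in the right order.
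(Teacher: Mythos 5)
Your proposal is correct, and for parts (i)--(v) it supplies exactly the kind of routine manipulations the paper leaves implicit: its proof dismisses (i)--(v) with ``straightforward calculation,'' and your reductions to Remark \ref{Remark 2.1} and the adjunction LR3) are the natural way to carry those out. The genuine difference is in part (vi), the only part the paper argues in detail: there the authors invoke the general residuated-lattice fact from [BT; 03] that $x\vee y=1$ implies $x^{n}\vee y^{n}=1$, and apply it to the residuated lattice $(Id(A),\cap,+,\otimes,\rightarrow,0,1)$, whereas you give an elementary, self-contained ring-theoretic argument by expanding $1=(i+j)^{2n-1}$ and checking that every monomial $i^{k}j^{2n-1-k}$ lies in $I^{n}$ (when $k\geq n$) or in $J^{n}$ (when $2n-1-k\geq n$). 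Your route avoids the external citation and is more transparent at the level of elements; the paper's route is shorter, stays within its guiding theme of transferring residuated-lattice identities to $Id(A)$, and exhibits (vi) as a purely lattice-theoretic phenomenon rather than something special to rings. One small slip worth fixing in your part (i): BCK2) gives $I\subseteq (J:(J:I))$, not $J\subseteq (J:(J:I))$ (the latter inclusion is also true, but it comes from Remark \ref{Remark 2.1}(7), not from BCK2)); the triple-quotient identity then follows as you intend, since $I\subseteq (J:(J:I))$ together with antitonicity of $X\mapsto (J:X)$ yields $(J:(J:(J:I)))\subseteq (J:I)$, while BCK2) applied to $(J:I)$ itself gives the reverse inclusion.
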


\begin{proof}
Results from $i)-v)$ can be obtained by straightforward calculation.

$(vi).$ Suppose that $I$ and $J$ coprime ideals. Then $I+J=A.$

We recall that if $(L,\wedge ,\vee ,\odot ,\rightarrow ,0,1)$ is a
residuated lattice, then $x\vee y=1$ implies $x^{n}\vee y^{n}=1,$ for every $%
n\geq 1,$ see [BT; 03].

Since, $(Id(A),\cap ,+,\otimes \rightarrow ,0=\{0\},1=A)$ is a residuated
lattice, we deduce that $I^{n}$ and $J^{n}$ coprime ideals, for every $n\geq
1.$
\end{proof}

\medskip

It is known that, a\textbf{\ }commutative ring in which the lattice of
ideals (augmented with the ideal product) is isomorphic to an MV-algebra, is
a direct sum of local Artinian chain rings with unit, see [BN; 09].

\medskip

In the following, using the connections between some subvarieties of
residuated lattices, we give new characterizations for commutative and
unitary rings for which the lattice of ideals is an MV-algebra.

Using Remark \ref{Remark 2.12} and Proposition \ref{Proposition 3.1} we can
give the following definition:

\begin{definition}
\label{Definition 3.3} A commutative unitary ring $A$ has Chang property if:

\begin{equation*}
\text{ }I+J=(J:(J:I)),
\end{equation*}%
for every $I,J\in Id(A).$
\end{definition}

\begin{example}
\label{Example 3.28 copy(1)}  If we consider the commutative ring $(\mathbb{Z%
}_{4},+,\cdot ),$ then $Id(\mathbb{Z}_{4})=\{\{\widehat{0}\},\{\widehat{0},%
\widehat{2}\},$ $\mathbb{Z}_{4}\}$. We remark that
\begin{equation*}
\begin{tabular}{l|lll}
$\ \ +$ & $\{\widehat{0}\}$ & $\{\widehat{0},\widehat{2}\}$ & $\mathbb{Z}_{4}
$ \\ \hline
$\{\widehat{0}\}$ & $\{\widehat{0}\}$ & $\{\widehat{0},\widehat{2}\}$ & $%
\mathbb{Z}_{4}$ \\
$\{\widehat{0},\widehat{2}\}$ & $\{\widehat{0},\widehat{2}\}$ & $\mathbb{Z}%
_{4}$ & $\mathbb{Z}_{4}$ \\
$\mathbb{Z}_{4}$ & $\mathbb{Z}_{4}$ & $\mathbb{Z}_{4}$ & $\mathbb{Z}_{4}$%
\end{tabular}%
\end{equation*}%
and $I+J=(J:(J:I)),$ for  every $I,J\in Id(\mathbb{Z}_{4})$, so, the ring $(%
\mathbb{Z}_{4},+,\cdot )$ has Chang property.
\end{example}

Since, MV-algebras are commutative bounded BCK algebras, i.e., bounded
BCK-algebras that satisfy the additional condition $(x\rightarrow
y)\rightarrow y=(y\rightarrow x)\rightarrow x,$ see [COM; 00] and Wajsberg
algebras are termwise equivalent with MV-algebras (see Remark \ref{Remark
2.11}), we conclude that:

\begin{theorem}
\label{Theorem 3.4}\textit{A commutative ring has Chang property if and only
if its lattice of ideals is an MV-algebra.}
\end{theorem}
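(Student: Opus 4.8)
The plan is to observe that the Chang property of the ring is nothing other than the residuated-lattice Chang condition $(C)$ of Remark~\ref{Remark 2.12} written out in $Id(A)$, and then to run the two termwise equivalences already recorded in the preliminaries to reach the MV-algebra structure. No new computation with ideals is needed beyond unwinding the definitions.

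First I would recall, as stated at the beginning of Section 3, that $(Id(A),\cap ,+,\otimes ,\rightarrow ,\mathbf{0},A)$ is a residuated lattice whose join is $\vee =+$ and whose implication is $I\rightarrow J=(J:I)$. Under these identifications the expression occurring in $(C)$ unwinds directly: writing $X=I\rightarrow J=(J:I)$ we get $(I\rightarrow J)\rightarrow J=X\rightarrow J=(J:X)=(J:(J:I))$, while $I\vee J=I+J$. Consequently the Chang condition $x\vee y=(x\rightarrow y)\rightarrow y$ becomes, for ideals, precisely $I+J=(J:(J:I))$. Thus $A$ has Chang property in the sense of Definition~\ref{Definition 3.3} if and only if the residuated lattice $Id(A)$ satisfies $(C)$.

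From here the conclusion follows by composing known equivalences. By Remark~\ref{Remark 2.12}, a residuated lattice satisfies $(C)$ if and only if it is a Wajsberg algebra; by Remark~\ref{Remark 2.11}, Wajsberg algebras and MV-algebras are termwise equivalent. Hence $Id(A)$ satisfies $(C)$ if and only if $Id(A)$ is an MV-algebra, and together with the previous paragraph this gives the stated biconditional. The only step demanding care is the transcription itself: one must confirm that $\vee$ in $Id(A)$ is $+$ rather than $\cap$, and that the nesting in $(x\rightarrow y)\rightarrow y$ is carried out in the right order so that it yields $(J:(J:I))$ and not another quotient. I do not expect a genuine algebraic obstacle here --- once the dictionary $I\rightarrow J=(J:I)$ of Proposition~\ref{Proposition 3.1} is applied consistently, the theorem reduces to a transcription of Remark~\ref{Remark 2.12} followed by the Wajsberg--MV correspondence of Remark~\ref{Remark 2.11}.
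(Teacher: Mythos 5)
Your proposal is correct and matches the paper's own argument: the paper likewise treats the ring-theoretic Chang property as the transcription of condition $(C)$ into the residuated lattice $(Id(A),\cap,+,\otimes,\rightarrow,\mathbf{0},A)$ with $I\rightarrow J=(J:I)$, and then invokes Remark~\ref{Remark 2.12} (Wajsberg $=$ residuated lattice with $(C)$) together with Remark~\ref{Remark 2.11} (Wajsberg--MV termwise equivalence). Your careful unwinding of $(I\rightarrow J)\rightarrow J=(J:(J:I))$ is exactly the dictionary the paper uses implicitly when stating Definition~\ref{Definition 3.3}.
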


\begin{proposition}
\label{Proposition 3.5} \textit{\ Let }$(L,\vee ,\wedge ,\odot ,\rightarrow
,0,1)$\textit{\ be a residuated lattice. The following assertions are
equivalent:}

\textit{(i) }$\ (x\rightarrow y)\rightarrow y=(y\rightarrow x)\rightarrow x,$%
\textit{\ for every \thinspace }$x,y\in L;$

\textit{(ii) }$\ \ x\vee y=(x\rightarrow y)\rightarrow y,$ \textit{for every
\thinspace }$x,y\in L;$

$\bigskip $\textit{(iii) }$x^{\ast \ast }=x,$\textit{\ }$x\wedge y=x\odot
(x\rightarrow y)$\textit{\ and }$(x\rightarrow y)\vee (y\rightarrow x)=1,$%
\textit{\ for every \thinspace }$x,y\in L;$

\textit{(iv) }$\ ((x\rightarrow y)\rightarrow y)\rightarrow x=y\rightarrow
x, $\textit{\ for every \thinspace }$x,y\in L;$

\textit{(v) \ \ For \thinspace }$x,y\in L,$\textit{\ }$x\leq y$\textit{\
implies }$(y\rightarrow x)\rightarrow x\leq y.$
\end{proposition}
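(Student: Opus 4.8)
The plan is to lean on the fact, already recorded in the Preliminaries, that conditions (i) and (ii) each single out the MV/Wajsberg structure among residuated lattices: by the criterion of [T; 99] recalled just before Definition~\ref{Definition 2.10}, (i) holds iff $(L,\oplus,{}^{\ast},0)$ with $x\oplus y=x^{\ast}\rightarrow y$ is an MV-algebra, while by Remark~\ref{Remark 2.12} together with Remark~\ref{Remark 2.11} the Chang condition (ii) is exactly the statement that $(L,\rightarrow,{}^{\ast},1)$ is a Wajsberg algebra, hence an MV-algebra. Thus (i) $\Leftrightarrow$ (ii) comes essentially for free, and (ii) $\Rightarrow$ (i) is in any case immediate since $(x\rightarrow y)\rightarrow y=x\vee y=y\vee x=(y\rightarrow x)\rightarrow x$. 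I would then close the circle by proving (i) $\Leftrightarrow$ (iv), (v) $\Rightarrow$ (ii), (i) $\Rightarrow$ (v), and (ii) $\Leftrightarrow$ (iii), each by a short computation. The only tools needed are the adjunction LR3), antitonicity of $\rightarrow$ in its first argument, the two basic bounds $x\leq (x\rightarrow y)\rightarrow y$ (BCK2) and $y\leq (x\rightarrow y)\rightarrow y$ (because $(x\rightarrow y)\odot y\leq 1\odot y=y$), and the triple law $((x\rightarrow y)\rightarrow y)\rightarrow y=x\rightarrow y$, which itself follows from these bounds and antitonicity.

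For (i) $\Rightarrow$ (iv) I would substitute $(x\rightarrow y)\rightarrow y=(y\rightarrow x)\rightarrow x$ into the left side of (iv) and apply the triple law in the form $((y\rightarrow x)\rightarrow x)\rightarrow x=y\rightarrow x$. For the converse, write $m=(x\rightarrow y)\rightarrow y$ and $n=(y\rightarrow x)\rightarrow x$; then (iv) reads $m\rightarrow x=y\rightarrow x$, its swapped instance reads $n\rightarrow y=x\rightarrow y$, and the triple law gives $m\rightarrow y=x\rightarrow y$ and $n\rightarrow x=y\rightarrow x$. Using the adjunction to rewrite $m\leq n=(y\rightarrow x)\rightarrow x$ as $(y\rightarrow x)\leq m\rightarrow x$, and $n\leq m=(x\rightarrow y)\rightarrow y$ as $(x\rightarrow y)\leq n\rightarrow y$, both inequalities reduce to trivialities by the identities just listed, so $m=n$, i.e. (i).

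The direction (i) $\Rightarrow$ (v) is quick: if $x\leq y$ then $x\rightarrow y=1$, so by (i) $(y\rightarrow x)\rightarrow x=(x\rightarrow y)\rightarrow y=1\rightarrow y=y\leq y$. I expect the step (v) $\Rightarrow$ (ii) to be the main obstacle, since it is where the nontrivial inequality $(x\rightarrow y)\rightarrow y\leq x\vee y$ must be manufactured (the reverse bound being automatic from the two basic bounds above). Here I would set $u=x\vee y$ and use antitonicity twice on $x\leq u$ to get $(x\rightarrow y)\rightarrow y\leq(u\rightarrow y)\rightarrow y$; then, since $y\leq u$, hypothesis (v) applied to the pair $y\leq u$ yields $(u\rightarrow y)\rightarrow y\leq u$, and chaining the two gives $(x\rightarrow y)\rightarrow y\leq u=x\vee y$, which is (ii).

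Finally, (iii) is nothing but the standard presentation of MV-algebras as the divisible, involutive, prelinear residuated lattices: its three clauses are divisibility $x\wedge y=x\odot(x\rightarrow y)$, double negation $x^{\ast\ast}=x$, and prelinearity $(x\rightarrow y)\vee(y\rightarrow x)=1$. I would obtain (ii) $\Rightarrow$ (iii) by reading these identities off the MV-algebra structure furnished by (ii) (citing [COM; 00], [Mu; 07]), and (iii) $\Rightarrow$ (i) from the fact that a divisible prelinear residuated lattice is a BL-algebra, which under $x^{\ast\ast}=x$ satisfies $(x\rightarrow y)\rightarrow y=(y\rightarrow x)\rightarrow x$ (cf. [COM; 00]). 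The recurring delicate point is orientation: every equality splits into two inequalities, one of which is a general residuated-lattice fact and the other of which must be produced from the hypothesis through a correctly chosen instance of the adjunction, so the care lies entirely in selecting which variable to feed into LR3).
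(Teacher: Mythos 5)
Your proposal is correct, and its global architecture matches the paper's: the equivalence (i)$\Leftrightarrow$(ii) and the equivalence involving (iii) are delegated to the cited MV/BL-algebra facts ([T; 99], [COM; 00]), while (iv) and (v) are settled by short residuation computations; your (iv)$\Rightarrow$(i) argument (rewrite $m\leq n$ through the adjunction so that it reduces to the identity $m\rightarrow x=y\rightarrow x$ supplied by (iv)) is literally the paper's. You deviate in two legs. For (i)$\Rightarrow$(iv) the paper substitutes $(x\rightarrow y)\rightarrow y=x\vee y$ and computes $(x\vee y)\rightarrow x=(x\rightarrow x)\wedge (y\rightarrow x)=y\rightarrow x$, whereas you invoke the purely BCK-style triple law $((a\rightarrow b)\rightarrow b)\rightarrow b=a\rightarrow b$; both are one-liners, but yours avoids appealing to the MV join formula and to the identity $(a\vee b)\rightarrow c=(a\rightarrow c)\wedge (b\rightarrow c)$. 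For (v), the paper closes the cycle by proving (v)$\Rightarrow$(i) directly: it applies (v) to the instance $x\leq (x\rightarrow y)\rightarrow y$ to get $(((x\rightarrow y)\rightarrow y)\rightarrow x)\rightarrow x=(x\rightarrow y)\rightarrow y$, then uses antitonicity from $y\leq (x\rightarrow y)\rightarrow y$ to conclude $(y\rightarrow x)\rightarrow x\leq (x\rightarrow y)\rightarrow y$; you instead prove (v)$\Rightarrow$(ii), applying (v) to $y\leq u=x\vee y$ and squeezing $(x\rightarrow y)\rightarrow y\leq (u\rightarrow y)\rightarrow y\leq u$. Both variants are sound, since the triple law and the bound $y\leq (x\rightarrow y)\rightarrow y$ hold in every residuated lattice; your routing makes the join formula of (ii) appear explicitly and keeps the proof self-contained, while the paper's keeps the whole (v) leg inside the implication-only (BCK) fragment without ever mentioning the join.
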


\begin{proof}
$(i)\Leftrightarrow (ii).$ Follows by [I; 09] and [T; 99].

$(i)\Leftrightarrow (iii).$ $\ $We recall that a BL-algebra is a residuated
lattice that verifies divisibility and prelinearity conditions: $x\wedge
y=x\odot (x\rightarrow y)$ and $(x\rightarrow y)\vee (y\rightarrow x)=1,$
for every \thinspace $x,y\in L.$ MV-algebras are BL-algebras which satisfies
double negation condition: $x^{\ast \ast }=x,$ for every \thinspace $x\in L,$
see [T; 99].

$(i)\Rightarrow (iv).$ Let $x,y\in L.$ Using (i), $L$ is an MV-algebra, so, $%
(x\rightarrow y)\rightarrow y=$ $(y\rightarrow x)\rightarrow x=x\vee y.$
Then $((x\rightarrow y)\rightarrow y)\rightarrow x=(x\vee y)\rightarrow x=$ $%
(x\rightarrow x)\wedge (y\rightarrow x)=y\rightarrow x.$

$(vi)\Rightarrow (i).$ By hypothesis, \ $y\rightarrow x\leq ((x\rightarrow
y)\rightarrow y)\rightarrow x,$ so, $[(x\rightarrow y)\rightarrow y]\odot $\
$(y\rightarrow x)\leq x$ and $(x\rightarrow y)\rightarrow y\leq
(y\rightarrow x)\rightarrow x.$ Similarly, $(y\rightarrow x)\rightarrow
x\leq (x\rightarrow y)\rightarrow y,$ thus, $(x\rightarrow y)\rightarrow
y=(y\rightarrow x)\rightarrow x,$\textit{\ for every \thinspace }$x,y\in L.$

$(i)\Rightarrow (v).$ Obviously.

$(v)\Rightarrow (i).$ Let $x,y\in L.$ From $x\leq (x\rightarrow
y)\rightarrow y,$ we deduce that $((x\rightarrow y)\rightarrow y)\rightarrow
x)\rightarrow x\leq (x\rightarrow y)\rightarrow y,$ that is, $%
(((x\rightarrow y)\rightarrow y)\rightarrow x)\rightarrow x=(x\rightarrow
y)\rightarrow y.$

From $y\leq (x\rightarrow y)\rightarrow y$ we deduce successively $%
((x\rightarrow y)\rightarrow y)\rightarrow x\leq y\rightarrow x,$ $%
(y\rightarrow x)\rightarrow x\leq (((x\rightarrow y)\rightarrow
y)\rightarrow x)\rightarrow x=(x\rightarrow y)\rightarrow y.$ So $%
(y\rightarrow x)\rightarrow x\leq (x\rightarrow y)\rightarrow y$ and
similarly $(x\rightarrow y)\rightarrow y\leq (y\rightarrow x)\rightarrow x.$
We conclude that $(x\rightarrow y)\rightarrow y=(y\rightarrow x)\rightarrow
x.$
\end{proof}

Using Theorem \ref{Theorem 3.4} and Proposition \ref{Proposition 3.5} we
deduce that:

\begin{corollary}
\label{Corollary 3.6}\textit{\ Let }$A$\textit{\ be a commutative unitary
ring. The following assertions are equivalent:}

\textit{(i) }$\ A$\textit{\ has Chang property;}

\textit{(ii) }$(Id(A),\cap ,+,Ann,0=\{0\},1=A)$\textit{\ is an MV-algebra;}

\textit{(iii) }$\ (I:(I:J))=(J:(J:I))$\textit{, for every }$I,J\in Id(A);$

\textit{(iv) }$\ Ann(Ann(I))=I,$\textit{\ }$\ I\cap J=(J:I)\otimes
I=(I:J)\otimes J$\textit{\ \ and }$(I:J)+(J:I)=A,$\textit{\ for every }$%
I,J\in Id(A);$

\textit{(v) }$\ \ (I:(J:(J:I)))=(I:J),$\textit{\ for every }$I,J\in Id(A);$

\textit{(vi) \ For }$I,J\in Id(A),$\textit{\ }$I\subseteq J$\textit{\
implies }$(I:(I:J))\subseteq J.$
\end{corollary}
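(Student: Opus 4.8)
The plan is to leverage the two results we are explicitly told to use, namely Theorem~\ref{Theorem 3.4} (a commutative ring has Chang property iff $Id(A)$ is an MV-algebra) and Proposition~\ref{Proposition 3.5} (the five equivalent conditions on a residuated lattice). Since Proposition~\ref{Proposition 3.1} establishes that $Id(A)$ is a residuated lattice with $I\rightarrow J=(J:I)$, the natural strategy is to pull back each clause of Proposition~\ref{Proposition 3.5} through this dictionary. The anchor is condition (i) of Proposition~\ref{Proposition 3.5}, which in the language of ideals reads $(J:(J:I))=(I:(I:J))$; combined with Theorem~\ref{Theorem 3.4}, this immediately ties the abstract equivalences to the ring-theoretic notion of Chang property.

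First I would handle the backbone equivalences $(i)\Leftrightarrow(ii)\Leftrightarrow(iii)$ of the corollary. For $(i)\Leftrightarrow(ii)$, I note that $A$ has Chang property means $I+J=(J:(J:I))$ by Definition~\ref{Definition 3.3}, and by Theorem~\ref{Theorem 3.4} this is equivalent to $Id(A)$ being an MV-algebra; the only extra point is to identify the MV-structure's operations, where $\vee=+$, $\wedge=\cap$, negation $^{\ast}$ becomes $Ann$ (since $I^{\ast}=I\rightarrow 0=(\mathbf{0}:I)=Ann(I)$), so the MV-algebra is precisely $(Id(A),\cap,+,Ann,\mathbf{0},A)$. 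For $(i)\Leftrightarrow(iii)$, I translate condition~(i) of Proposition~\ref{Proposition 3.5}, $(x\rightarrow y)\rightarrow y=(y\rightarrow x)\rightarrow x$, into ideal notation: $(x\rightarrow y)=(y:x)$, then $((y:x)\rightarrow y)=(y:(y:x))$, so the abstract identity becomes exactly $(J:(J:I))=(I:(I:J))$, which is clause~(iii) of the corollary. Thus (i), (ii), (iii) are linked through the single translation $x\rightarrow y\leftrightarrow (y:x)$ applied to Proposition~\ref{Proposition 3.5}(i)--(ii).

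Next I would dispatch the remaining clauses (iv), (v), (vi) by the same translation applied to Proposition~\ref{Proposition 3.5}(iii), (iv), (v) respectively. Clause~(vi) is the direct image of Proposition~\ref{Proposition 3.5}(v): the hypothesis $x\le y$ becomes $I\subseteq J$, and the conclusion $(y\rightarrow x)\rightarrow x\le y$ becomes $(I:(I:J))\subseteq J$ (reading $(y\rightarrow x)=(x:y)$ and then $((x:y)\rightarrow x)=(x:(x:y))$, so with the roles $x=J$, $y=I$ one gets $(I:(I:J))$). Clause~(v), $(I:(J:(J:I)))=(I:J)$, is the translation of Proposition~\ref{Proposition 3.5}(iv), for which I would additionally invoke Remark~\ref{Remark 2.1}(10) to rewrite nested quotients. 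Clause~(iv) is the most content-heavy: it is the image of Proposition~\ref{Proposition 3.5}(iii), where the three conjuncts $x^{\ast\ast}=x$, $x\wedge y=x\odot(x\rightarrow y)$, and $(x\rightarrow y)\vee(y\rightarrow x)=1$ become $Ann(Ann(I))=I$, $I\cap J=(J:I)\otimes I=(I:J)\otimes J$, and $(I:J)+(J:I)=A$; here I would use $x^{\ast\ast}=Ann(Ann(I))$ together with the fact that $\odot$ corresponds to the ideal product $\otimes$ and $\wedge$ to intersection.

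The main obstacle I expect is not any single deep step but the bookkeeping of the translation: one must be scrupulous about which ideal plays the role of $x$ and which of $y$, because the operation $\rightarrow$ reverses the arguments relative to the quotient (that is, $x\rightarrow y$ corresponds to $(y:x)$, not $(x:y)$), so careless substitution flips inclusions and produces false statements. In particular, verifying clause~(iv) requires checking that the residuated-lattice equality $x\wedge y=x\odot(x\rightarrow y)$ symmetrizes correctly to give both $(J:I)\otimes I$ and $(I:J)\otimes J$ as expressions for $I\cap J$, which relies on the MV-algebra identity $x\wedge y=x\odot(x\rightarrow y)=y\odot(y\rightarrow x)$ recorded in the preliminaries. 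Once the correspondence is pinned down precisely, every clause follows mechanically from Proposition~\ref{Proposition 3.5}, so the proof is essentially a verification that the abstract equivalences survive transport along the isomorphism of structures, with no genuinely new algebra required beyond the already-cited identities of Remark~\ref{Remark 2.1}.
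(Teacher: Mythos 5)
Your proposal is correct and matches the paper's intended argument exactly: the paper gives no separate proof, stating only that the corollary follows from Theorem \ref{Theorem 3.4} and Proposition \ref{Proposition 3.5}, which is precisely the clause-by-clause translation (via $I\rightarrow J=(J:I)$, $\odot=\otimes$, $\vee=+$, $\wedge=\cap$, $^{\ast}=Ann$) that you carry out. The only blemish is the parenthetical in your discussion of clause (vi), where you write the roles as $x=J$, $y=I$ although your own translation (hypothesis $I\subseteq J$, conclusion $(I:(I:J))\subseteq J$) corresponds to $x=I$, $y=J$; since the statement is universally quantified this is harmless.
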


Using some connections between divisible and involutive properties in
residuated lattices we give a new characterization for commutative rings
with Chang property:

\begin{proposition}
\label{Proposition 3.7}\textit{\ Let }$(L,\vee ,\wedge ,\odot ,\rightarrow
,0,1)$\textit{\ be a residuated lattice. Then }$\ L$\textit{\ is divisible
satisfying (DN) condition if and only if }$L$\textit{\ is an MV-algebra.}
\end{proposition}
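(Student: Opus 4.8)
The plan is to reduce the claimed equivalence to Proposition \ref{Proposition 3.5}, item (iii) of which asserts that a residuated lattice is an MV-algebra precisely when it is involutive ($x^{\ast\ast}=x$), divisible ($x\wedge y=x\odot(x\rightarrow y)$) and prelinear ($(x\rightarrow y)\vee(y\rightarrow x)=1$). Read this way, the hypotheses $(DN)$ and $(div)$ of the statement are literally two of the three MV-defining conditions of \ref{Proposition 3.5}(iii). The forward implication is then immediate: an MV-algebra satisfies $(DN)$ by MV1, and satisfies $(div)$ because the lattice meet is $x\wedge y=x\odot(x^{\ast}\oplus y)=x\odot(x\rightarrow y)$, as recalled in the Preliminaries. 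So there is nothing to prove in that direction.

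The reverse implication, $(div)+(DN)\Rightarrow$ MV-algebra, is the substantive part. Rather than verifying prelinearity on its own, I would prove the Chang/Wajsberg identity of \ref{Proposition 3.5}(i), namely $(x\rightarrow y)\rightarrow y=(y\rightarrow x)\rightarrow x$, by establishing the stronger symmetric identity $(x\rightarrow y)\rightarrow y=x\vee y$. Symmetry of the right-hand side then yields \ref{Proposition 3.5}(i) at once, and Proposition \ref{Proposition 3.5} concludes that $L$ is an MV-algebra.

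To prove $(x\rightarrow y)\rightarrow y=x\vee y$, I would first record the standard consequences of $(DN)$: the involutive law $u\rightarrow v=(u\odot v^{\ast})^{\ast}$ (hence $(u\rightarrow v)^{\ast}=u\odot v^{\ast}$ and $(u\odot v)^{\ast}=u\rightarrow v^{\ast}$) and contraposition $u\rightarrow v=v^{\ast}\rightarrow u^{\ast}$, together with the De Morgan law $(x\vee y)^{\ast}=x^{\ast}\wedge y^{\ast}$, which holds in every residuated lattice. Writing $t=(x\rightarrow y)\rightarrow y$, contraposition turns this into $t=y^{\ast}\rightarrow(x\rightarrow y)^{\ast}=y^{\ast}\rightarrow(x\odot y^{\ast})$, and a second application of the involutive law gives $t^{\ast}=y^{\ast}\odot(x\odot y^{\ast})^{\ast}=y^{\ast}\odot(x\rightarrow y)$. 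The crucial step is to rewrite $x\rightarrow y=y^{\ast}\rightarrow x^{\ast}$ (contraposition once more), so that divisibility applies verbatim to the matching factor $y^{\ast}$: $t^{\ast}=y^{\ast}\odot(y^{\ast}\rightarrow x^{\ast})=y^{\ast}\wedge x^{\ast}=(x\vee y)^{\ast}$, whence $t=x\vee y$ by $(DN)$.

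I expect the main obstacle to be exactly this computation of $t^{\ast}$. The naive attempt to force prelinearity directly by showing $(x\odot y^{\ast})\wedge(y\odot x^{\ast})=0$ stalls: one easily gets $(x\odot y^{\ast})\odot(y\odot x^{\ast})=0$ from $x\odot x^{\ast}=y\odot y^{\ast}=0$, but this only yields that the meet is nilpotent, which is insufficient in the presence of nonzero nilpotents. The decisive move is instead to convert $x\rightarrow y$ into the form $y^{\ast}\rightarrow x^{\ast}$ so that the antecedent matches the factor $y^{\ast}$ already present in $t^{\ast}=y^{\ast}\odot(x\rightarrow y)$, allowing divisibility to collapse the product into the meet $y^{\ast}\wedge x^{\ast}$. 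Once this identification is in place, the remainder is routine bookkeeping with the involutive identities and an appeal to Proposition \ref{Proposition 3.5}.
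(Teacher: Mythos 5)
Your proposal is correct and is essentially the paper's own argument: both proofs reduce the statement to the Chang/Wajsberg identity $x\vee y=(x\rightarrow y)\rightarrow y$ and establish it with exactly the same ingredients --- $(DN)$, the De Morgan law $(x\vee y)^{\ast}=x^{\ast}\wedge y^{\ast}$, divisibility applied to the meet of negations, contraposition, and the identity $(u\odot v)^{\ast}=v\rightarrow u^{\ast}$. The only cosmetic difference is direction: the paper starts from $x\vee y=(x\vee y)^{\ast\ast}$ and unfolds to $(y\rightarrow x)\rightarrow x$, whereas you start from $t=(x\rightarrow y)\rightarrow y$, compute $t^{\ast}=(x\vee y)^{\ast}$, and recover $t=x\vee y$ by $(DN)$.
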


\begin{proof}
Obviously, an MV-algebra is an involutive divisible residuated lattice.

Conversely, if $L$ is involutive and divisible, $x=x^{\ast \ast }$ and $%
x\vee y=(x\vee y)^{\ast \ast }=(x^{\ast }\wedge y^{\ast })^{\ast }=[$ $%
x^{\ast }\odot (x^{\ast }\rightarrow y^{\ast })]^{\ast }=$ $(x^{\ast
}\rightarrow y^{\ast })$ $\rightarrow x^{\ast \ast }=(x^{\ast }\rightarrow
y^{\ast })\rightarrow x=(y\rightarrow x)\rightarrow x$, for every $x,y\in L.$
Thus, $L$ is an MV-algebra.
\end{proof}

Using Proposition \ref{Proposition 3.7} we deduce that:

\begin{corollary}
\label{Corollary 3.8}\textit{Let }$A$\textit{\ be a commutative unitary
ring. The following assertions are equivalent:}

\textit{(i) }$\ A$\textit{\ has Chang property;}

\textit{(ii) }$I\cap J=(J:I)\otimes I$\textit{\ and }$Ann(Ann(I))=I,$\textit{%
\ for every }$I,J\in Id(A).$
\end{corollary}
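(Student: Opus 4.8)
The plan is to prove Corollary \ref{Corollary 3.8} by translating the purely ring-theoretic condition (ii) into the residuated-lattice language of Proposition \ref{Proposition 3.7}, and then invoking that proposition together with Theorem \ref{Theorem 3.4}. The key observation is that the residuated lattice in question is $(Id(A),\cap ,+,\otimes ,\rightarrow ,0=\{0\},1=A)$ with $I\rightarrow J=(J:I)$, so the negation $I^{\ast}$ is $I\rightarrow 0=(\mathbf{0}:I)=Ann(I)$. Hence the double negation condition $(DN)$, namely $I^{\ast\ast}=I$, is exactly $Ann(Ann(I))=I$; and the divisibility condition $(div)$, namely $I\wedge J=I\odot(I\rightarrow J)$, translates to $I\cap J=I\otimes(J:I)=(J:I)\otimes I$. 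So condition (ii) of the corollary is precisely ``divisible and involutive'' for this particular residuated lattice.

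First I would record these two translations explicitly: that $Ann(I)=(\mathbf{0}:I)=I^{\ast}$ in $Id(A)$ (using $0=\mathbf{0}$ and the definition $I\rightarrow J=(J:I)$), and that the lattice meet is intersection, i.e.\ $I\wedge J=I\cap J$. With these identifications in hand, condition (ii) of the corollary reads, verbatim, as ``$Id(A)$ is divisible and satisfies $(DN)$.'' Then I would apply Proposition \ref{Proposition 3.7} to conclude that (ii) holds if and only if $Id(A)$ is an MV-algebra. Finally, by Theorem \ref{Theorem 3.4}, $Id(A)$ is an MV-algebra if and only if $A$ has Chang property, which is exactly condition (i). Chaining these equivalences gives (i) $\Leftrightarrow$ (ii).

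I do not expect any serious obstacle here, since the corollary is essentially a dictionary entry: the entire content is the correct matching of $^{\ast}$ with $Ann$ and of $\wedge$ with $\cap$. The only point that requires a moment's care is confirming that divisibility in the form $I\wedge J=I\odot(I\rightarrow J)$ coincides with the stated $I\cap J=(J:I)\otimes I$; this follows from commutativity of $\otimes$ (so $I\otimes(J:I)=(J:I)\otimes I$) and from $I\rightarrow J=(J:I)$. Once this is noted, the proof is a direct citation of Proposition \ref{Proposition 3.7} followed by Theorem \ref{Theorem 3.4}, with no further computation needed.
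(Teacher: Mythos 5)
Your proposal is correct and is essentially the paper's own argument: the paper derives Corollary \ref{Corollary 3.8} directly from Proposition \ref{Proposition 3.7} (via Theorem \ref{Theorem 3.4}), exactly as you do, with the same dictionary $I^{\ast}=Ann(I)$, $I\wedge J=I\cap J$, $I\odot(I\rightarrow J)=(J:I)\otimes I$ identifying condition (ii) with ``divisible plus $(DN)$'' in the residuated lattice $Id(A)$. Your write-up merely makes explicit the translation the paper leaves implicit, so there is nothing to add.
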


\begin{proposition}
\label{Proposition 3.9}\textit{Let} $A$ \textit{be a commutative ring and} $%
I,J\in Id\left( A\right) $. \textit{The following relations are true:}

1) $I+J=<I\cup J>\subseteq \left( I:\left( I:J\right) \right) ,$ for every $%
I,J\in Id\left( A\right) $;

2) \textit{If} $A$ \textit{is a principal ideal domain, then} $\left(
I:\left( I:J\right) \right) =\left( J:\left( J:I\right) \right) =I+J$,
\textit{for} $I$ \textit{and} $J$ \textit{nonzero ideals};

3) \textit{If} $\mathcal{A}$ \textit{is a ring factor of a principal ideal
domain }$D$\textit{, then} $\left( \mathcal{I}:\left( \mathcal{I}:\mathcal{J}%
\right) \right) =\left( \mathcal{J}:\left( \mathcal{J}:\mathcal{I}\right)
\right) =\mathcal{I}+\mathcal{J}$\textit{, \ for} $\mathcal{I}$ \textit{and}
$\mathcal{J}$ \textit{arbitrary ideals in }$\mathcal{A}$.
\end{proposition}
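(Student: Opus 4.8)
The plan is to treat the three parts in sequence: part 1 follows directly from the residuated-lattice (BCK) structure of $Id(A)$ already established in Proposition \ref{Proposition 3.1}, part 2 is an explicit $\gcd$-computation in a principal ideal domain, and part 3 is obtained by transporting the identity of part 2 through the ideal correspondence of a quotient ring.

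For part 1, since $I+J=<I\cup J>$ is the smallest ideal containing both $I$ and $J$, it suffices to show $I\subseteq (I:(I:J))$ and $J\subseteq (I:(I:J))$. The first is immediate from Remark \ref{Remark 2.1}(7) applied with $K=(I:J)$. For the second, Remark \ref{Remark 2.1}(8) gives $(I:J)\otimes J\subseteq I$; since $\otimes$ is commutative and $(\otimes,\rightarrow)$ form an adjoint pair (LR3), this is equivalent to $J\subseteq (I:(I:J))$. Taking the join yields $I+J\subseteq (I:(I:J))$. (This is exactly condition BCK2 of Proposition \ref{Proposition 3.1} read inside $Id(A)$.)

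For part 2, assume $A$ is a principal ideal domain and write $I=(a)$, $J=(b)$ with $a,b\neq 0$. Put $d=\gcd(a,b)$, so $a=da'$, $b=db'$ with $\gcd(a',b')=1$ and $I+J=(d)$. I would first compute $(I:J)=\{x:a\mid xb\}=(a')$, using that $a\mid xb$ is equivalent to $a'\mid x$ when $\gcd(a',b')=1$. Substituting, $(I:(I:J))=((a):(a'))=\{y:a\mid ya'\}=\{y:d\mid y\}=(d)=I+J$, where the cancellation of $a'$ is legitimate because $A$ is a domain and $a'\neq 0$. The symmetric computation gives $(J:(J:I))=(d)$ as well. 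The nonzero hypothesis is what guarantees $d\neq 0$ and permits the divisibility cancellations.

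For part 3, write $\mathcal{A}=D/N$ with projection $\pi:D\to\mathcal{A}$ and $N$ a nonzero ideal of the principal ideal domain $D$. By the correspondence theorem, the ideals of $\mathcal{A}$ are exactly $\mathcal{I}=\pi(I)$ for ideals $I\supseteq N$ of $D$, and this bijection preserves sums, $\mathcal{I}+\mathcal{J}=\pi(I+J)$. The key step is to verify that it also preserves the ideal quotient, $(\mathcal{I}:\mathcal{J})=\pi((I:J))$: indeed $\overline{x}\,\mathcal{J}\subseteq\mathcal{I}$ holds iff $xJ\subseteq I$, using $N\subseteq I$ and $\pi^{-1}(\mathcal{I})=I$. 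Since $N\neq 0$, every ideal $I\supseteq N$ of $D$ is nonzero, so part 2 applies in $D$ to give $(I:(I:J))=I+J$; pushing this through $\pi$ and using the two preservation facts yields $(\mathcal{I}:(\mathcal{I}:\mathcal{J}))=\mathcal{I}+\mathcal{J}$, and symmetrically for the reversed order. I expect the main obstacle to be precisely this preservation of the quotient operation under $\pi$, together with the bookkeeping of the containment/nonzero hypotheses: without $N\neq 0$ the identity can fail at the zero ideal, since in a domain $(\mathbf{0}:(\mathbf{0}:\mathcal{J}))=\mathcal{A}\neq\mathcal{J}$.
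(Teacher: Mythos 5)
Your proposal is correct, but it takes a genuinely different route from the paper's in two of the three parts. For part 1 the paper argues element-wise: it takes $x=ai+bj\in I+J$ and $y\in(I:J)$ and checks directly that $xy=aiy+bjy\in I$; you instead note that $(I:(I:J))$ is an ideal containing both $I$ (Remark \ref{Remark 2.1}(7)) and $J$ (Remark \ref{Remark 2.1}(8) plus adjointness, i.e.\ BCK2 of Proposition \ref{Proposition 3.1}), which is the same fact packaged lattice-theoretically. Part 2 is essentially the same $\gcd$ computation in both proofs. The real divergence is part 3: the paper redoes the explicit computation inside $\mathcal{A}=D/W$, writing $\mathcal{I}=\langle\widehat{\beta}\rangle$, $\mathcal{J}=\langle\widehat{\gamma}\rangle$ with $\beta\mid\alpha$, $\gamma\mid\alpha$, computing $(\mathcal{I}:\mathcal{J})=\langle\widehat{\beta_{1}}\rangle$ and $(\mathcal{I}:(\mathcal{I}:\mathcal{J}))=\langle\widehat{\delta}\rangle$ for $\delta=\gcd\{\beta,\gamma\}$, and then treating the zero ideal of $\mathcal{A}$ by a separate annihilator computation ($Ann(Ann(\mathcal{J}))=\mathcal{J}$). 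You instead transport the identity of part 2 through the correspondence theorem, after verifying that the bijection $I\mapsto\pi(I)$ between ideals of $D$ containing $N$ and ideals of $\mathcal{A}$ preserves sums and ideal quotients (the latter using $(I:J)\supseteq I\supseteq N$, so the quotient stays inside the correspondence). Your route is shorter, avoids repeating the $\gcd$ argument, and handles the zero ideal of $\mathcal{A}$ uniformly, since it corresponds to the nonzero ideal $N$ of $D$; it also makes explicit the hypothesis $N\neq 0$, which the statement leaves implicit but which is genuinely needed, as the paper's own Example \ref{Example 3.15}(1) on $\mathbb{Z}$ shows (the paper's proof assumes it tacitly through $W=\alpha D$ with the divisibility relations and $\alpha=\gamma\sigma$). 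What the paper's explicit version buys is concrete generators for all the quotient ideals of $D/W$, which is the form reused later in the finite examples and in Lemma \ref{Lemma 5.2}.
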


\begin{proof}
1) Let $x\in I+J$, therefore $x=ai+bj,a,b\in A$ and $i\in I$, $j\in J$. We
have that $\left( I:J\right) =\{y\in A:y\cdot J\subseteq I\}$. For $y\in
\left( I:J\right) $ we have $xy=\left( ai+bj\right) y=aiy+bjy$, with $aiy\in
I$ and $bjy\in I$. It results that, $xy\in I$.

2) If $A$ is a principal ideal domain, then let $a,b\in A$ be two nonzero
elements and $I=<a>$, $J=<b>$ be the principal ideals generated by $a$ and $%
b $.

Let $d=$\textit{gcd}$\{a,b\}$. We have $d=a\gamma +b\beta ,$ $a,b\in A$, $%
a=a_{1}d$ and $b=b_{1}d$, with $1=$\textit{gcd}$\{a_{1},b_{1}\}$.

We will prove that $\left( I:\left( I:J\right) \right) =\left( J:\left(
J:I\right) \right) =<d>=I+J$. First of all, we remark that $\left(
I:J\right) =$ $<a_{1}>$. Indeed, if $y\in \left( I:J\right) $, we have $%
yb_{1}d\in I$, that means $yb_{1}d=$ $a_{1}dq,q\in A$ and $yb_{1}=$ $a_{1}q$%
. Since $1=$\textit{gcd}$\{a_{1},b_{1}\}$, it results $a_{1}\mid y$ and $%
y\in <a_{1}>$. If $y\in <a_{1}>$, we have $y=a_{1}z$ and $yb=a_{1}zb_{1}d\in
I$. By repeating the same proof, since $a_{1}=$\textit{gcd}$\{a,a_{1}\}$, we
obtain that $\left( I:\left( I:J\right) \right) =<d>=I+J$. \ In the same
way, the equality $\left( J:\left( J:I\right) \right) =<d>=I+J$ can be
obtained.

If $I=\{0\}$, since $A$ is an integral domain, we have that $\left( \mathbf{0%
}:\left( \mathbf{0}:J\right) \right) =Ann(Ann(J))=A,$ for every $J\in
Id\left( A\right) \backslash \{0\}$ and $\left( J:\left( J:\mathbf{0}\right)
\right) =J.$

We deduce that if $A$ is a principal ideal domain, then $Id(A)$ is only a
noncommutative BCK-algebra.

3) We apply 2) from above.

Let $D$ be a principal ideal domain and $W=\alpha D,$ $\mathbb{\alpha \in }\
D$, be an ideal in $D$. First of all, we remark that if $D\,$\ is an
integral domain, its factor ring is not allways an integral domain. For
example, $\mathbb{Z}$ and $\mathbb{Z}_{n}$. Let $\mathcal{A}=D/W$ be the
factor ring, which is a principal ring. Let $\mathcal{I},\mathcal{J}$ be two
nonzero ideals of $\mathcal{A}$, $\mathcal{I}=I/W$ and $\mathcal{J}=J/W$,
where $I$ and $J$ are nonzero ideals in $D$, with $W\subseteq I$ and $%
W\subseteq J$. It results that $I=\beta W$ and $J=\gamma W$, $\beta \mid
\alpha $ and $\gamma \mid \alpha $, therefore $\mathcal{I}=<\widehat{\beta }%
> $ and $\mathcal{J}=<\widehat{\gamma }>$, $\beta $, $\gamma \in D$. We will
prove that $\left( \mathcal{I}:\left( \mathcal{I}:\mathcal{J}\right) \right)
=\left( \mathcal{J}:\left( \mathcal{J}:\mathcal{I}\right) \right) =<\widehat{%
\delta }>=\mathcal{I}+\mathcal{J}$, where $\delta =$\textit{gcd}$\{\beta
,\gamma \}$. From the above, we have that $\mathcal{I}+\mathcal{J}\subseteq
\left( \mathcal{I}:\left( \mathcal{I}:\mathcal{J}\right) \right) $ and $%
\mathcal{I}+\mathcal{J}\subseteq \left( \mathcal{J}:\left( \mathcal{J}:%
\mathcal{I}\right) \right) $. Let $\delta =m\beta +n\gamma ,\beta =\delta
\beta _{1}$ and $\gamma =\delta \gamma _{1}$ with $1=$\textit{gcd}$\{\beta
_{1},\gamma _{1}\}$. From 2), we have $\left( \mathcal{I}:\mathcal{J}\right)
=<\widehat{\beta _{1}}>$. If $\widehat{y}\in <\widehat{\beta _{1}}>$, we
have $\widehat{y}=\widehat{\beta _{1}}\widehat{z}$ and $\widehat{y}\widehat{%
\beta }=\widehat{\beta _{1}}\widehat{z}\widehat{\gamma _{1}}\widehat{\delta }%
\in \mathcal{I}$. By repeating the same proof, since $\beta _{1}=$\textit{gcd%
}$\{\beta ,\beta _{1}\}$, we obtain that $\left( \mathcal{I}:\left( \mathcal{%
I}:\mathcal{J}\right) \right) =<\widehat{\delta }>=\mathcal{I}+\mathcal{J}$.
\ In the same way, the equality $\left( \mathcal{J}:\left( \mathcal{J}:%
\mathcal{I}\right) \right) =<\widehat{\delta }>=\mathcal{I}+\mathcal{J}$ can
be obtained.

If $I=\{0\}$ and $J$ is a nonzero ideal in $D,$ the equality is also true.
We must prove that $\left( \mathbf{0}:\left( \mathbf{0}:\mathcal{J}\right)
\right) =\left( \mathcal{J}:\left( \mathcal{J}:\mathbf{0}\right) \right) $,
with $\mathbf{0}$ zero ideal in $\mathcal{A}$, which is equivalent to $%
Ann\left( Ann\left( \mathcal{J}\right) \right) =\mathcal{J}$. For $\mathcal{J%
}=<\widehat{\gamma }>$, we have $Ann\left( \mathcal{J}\right) =<\widehat{%
\sigma }>$, with $\alpha =\gamma \sigma $, therefore $Ann\left( Ann\left(
\mathcal{J}\right) \right) =<\widehat{\gamma }>=\mathcal{J}$.
\end{proof}

From \ Corollary \ref{Corollary 3.6} and Proposition \ref{Proposition 3.9}
(3) we deduce that:

\begin{theorem}
\label{Theorem 3.10}\textit{A ring factor of a principal ideal domain has
Chang property. }
\end{theorem}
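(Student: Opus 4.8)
The plan is to observe that Proposition \ref{Proposition 3.9}(3) already supplies, verbatim, the identity that defines Chang property, so the theorem will follow by a direct appeal to that result together with Definition \ref{Definition 3.3} (or, equivalently, Corollary \ref{Corollary 3.6}). First I would fix a principal ideal domain $D$ and write the factor ring as $\mathcal{A}=D/W$ for some ideal $W$ of $D$. Then I would take two \emph{arbitrary} ideals $\mathcal{I},\mathcal{J}\in Id(\mathcal{A})$ and invoke Proposition \ref{Proposition 3.9}(3), which asserts that $\left(\mathcal{I}:\left(\mathcal{I}:\mathcal{J}\right)\right)=\left(\mathcal{J}:\left(\mathcal{J}:\mathcal{I}\right)\right)=\mathcal{I}+\mathcal{J}$ in such a ring. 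Reading off the second equality gives $\mathcal{I}+\mathcal{J}=\left(\mathcal{J}:\left(\mathcal{J}:\mathcal{I}\right)\right)$, which is exactly the defining condition of Chang property in Definition \ref{Definition 3.3}. Hence $\mathcal{A}$ has Chang property.

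The verification is essentially immediate, so there is no serious obstacle remaining at this stage: all the substantive work was carried out in the proof of Proposition \ref{Proposition 3.9}(3), which splits into the case of two nonzero ideals (handled via the $gcd$ computation pulled back from $D$) and the case where one ideal is zero (handled through $Ann(Ann(\mathcal{J}))=\mathcal{J}$). The only point I would be careful to check is that Proposition \ref{Proposition 3.9}(3) is genuinely stated and proved for arbitrary ideals, including the zero ideal, so that no case is silently omitted when quoting it here.

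Optionally, I could route the conclusion through Corollary \ref{Corollary 3.6} instead of Definition \ref{Definition 3.3}: Proposition \ref{Proposition 3.9}(3) simultaneously yields $\left(\mathcal{I}:\left(\mathcal{I}:\mathcal{J}\right)\right)=\left(\mathcal{J}:\left(\mathcal{J}:\mathcal{I}\right)\right)$ (both sides equalling $\mathcal{I}+\mathcal{J}$), which is precisely condition (iii) of Corollary \ref{Corollary 3.6}, so Chang property follows in that form as well. By Theorem \ref{Theorem 3.4} this is furthermore equivalent to $Id(\mathcal{A})$ being an MV-algebra, which is the conceptual content behind the statement and the reason it is worth isolating.
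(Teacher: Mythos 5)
Your proposal is correct and follows essentially the same route as the paper, which likewise deduces Theorem \ref{Theorem 3.10} directly from Proposition \ref{Proposition 3.9}(3) combined with Corollary \ref{Corollary 3.6} (equivalently, Definition \ref{Definition 3.3}). Your caveat about the zero-ideal case is also well placed, since the paper's proof of Proposition \ref{Proposition 3.9}(3) handles that case separately via $Ann(Ann(\mathcal{J}))=\mathcal{J}$.
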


\begin{example}
\label{Example 3.15}

1) The ring of integers $(\mathbb{Z},\mathbb{+},\cdot )$ does not have Chang
property \ because the relation $\left( I:\left( I:J\right) \right) =\left(
J:\left( J:I\right) \right) $, is not true for $I=\mathbf{0}$. \ Indeed,
since $\mathbb{Z}$ is principal ideal domain, we have $\left( \mathbf{0}%
:\left( \mathbf{0}:J\right) \right) =Ann\left( Ann\left( J\right) \right) =%
\mathbb{Z}$ and $\left( J:\left( J:\mathbf{0}\right) \right) =J$, for every $%
J\in Id\left( \mathbb{Z}\right) .$

2) Let $K$ be a field, $K\left[ X\right] $ the polynomial ring and $f\in K%
\left[ X\right] $. Therefore, from Proposition \ref{Proposition 3.9}, the
lattice of ideals of the quotient ring $A=K\left[ X\right] /\left( f\right) $
is an MV-algebra and has Chang property.

Let $K=\mathbb{R}$ and $f\left( X\right) =\left( X+1\right) \left(
X+2\right) \left( X+3\right) $. Therefore, $A=\mathbb{R}[X]/\left( f\right)
\simeq \mathbb{R}[X]/\left( X+1\right) \times \mathbb{R}[X]/\left(
X+2\right) \times \mathbb{R}[X]/\left( X+3\right) $, from Chinese Remainder
Theorem. From here, we have that $A\simeq \mathbb{R}\times \mathbb{R}\times
\mathbb{R}$. In this case, $Id\left( A\right) $ is a Boolean lattice. Indeed
$Id\left( A\right) =\{0$, $0\times 0\times \mathbb{R}$, $0\times \mathbb{R}%
\times 0$, $\mathbb{R}\times 0\times 0$, $0\times \mathbb{R}\times \mathbb{R}
$, $\mathbb{R}\times 0\times \mathbb{R}$, $\mathbb{R}\times \mathbb{R}\times
0$, $\mathbb{R}\times \mathbb{R}\times \mathbb{R}\}$ and is a Boolean
algebra.

Let $K=\mathbb{R}$ and $f\left( X\right) =\left( X+1\right) ^{2}\left(
X+2\right) $. Therefore, $A=\mathbb{R}[X]/\left( f\right) \simeq \mathbb{R}%
[X]/\left( X+1\right) ^{2}\times \mathbb{R}[X]/\left( X+2\right) \simeq
\mathbb{R}[X]/\left( X+1\right) ^{2}\times \mathbb{R}$, from Chinese
Remainder Theorem. In this case $Id\left( A\right) $ is an MV-algebra.
Indeed, $I=\mathbb{R}\times 0$ is an ideal in $A$, $Ann\left( I\right) =$ $%
0\times \mathbb{R}$. In this case $I$ and $Ann\left( I\right) $ are not
coprime ideals and $Id\left( A\right) $ is not a Boolean algebra is an
MV-algebra.

\textbf{The result can be generalized:} If $\mathbb{K}$ is a field and $%
f=f_{1}^{k_{1}}f_{2}^{k_{2}}...f_{r}^{k_{r}},$ with $f_{i}$ irreducible
polynomials, we have that $A=\mathbb{K}[X]/\left( f\right) \simeq \prod
\mathbb{K}[X]/\left( f_{i}^{k_{i}}\right) $, therefore the algebra $Id\left(
A\right) $ is a Boolean algebra or an MV-algebra (which is not Boolean).
\end{example}

\begin{remark}
Using Example \ref{Example 3.15} (2) we remark that the condition from
Theorem \ref{Theorem 3.10} is a necessary condition for a ring to have Chang
property.
\end{remark}

We recall the fundamental theorem of finite abelian groups:

\begin{theorem}
\label{Theorem 3.12}[HE; 75, Theorem 2.14.1] \textit{Every finite abelian
group is a direct product of cyclic groups of prime power order. Moreover,
the number of terms in the product and the orders of the cyclic groups are
uniquely determined by the group.}
\end{theorem}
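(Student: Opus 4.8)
The plan is to prove the two assertions separately: the \emph{existence} of a decomposition into cyclic groups of prime-power order, and its \emph{uniqueness}. I write $G$ additively and proceed by reductions. For existence, the first step is the \emph{primary decomposition}. Writing $|G| = p_1^{a_1}\cdots p_s^{a_s}$, set $G_{p} = \{x \in G : p^m x = 0 \text{ for some } m\}$ for each prime $p \mid |G|$. Each $G_p$ is a subgroup since $G$ is abelian, and by choosing integers $c_i$ with $\sum_i c_i (|G|/p_i^{a_i}) = 1$ (Bézout, since these cofactors have greatest common divisor $1$) one checks $x = \sum_i c_i(|G|/p_i^{a_i}) x$ with the $i$-th summand lying in $G_{p_i}$; the resulting sum $G = G_{p_1} + \cdots + G_{p_s}$ is direct. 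This reduces existence to the case of a finite abelian $p$-group.

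For a finite abelian $p$-group I would induct on $|G|$, the case $|G| = 1$ being trivial. In the inductive step I pick an element $a$ of maximal order $p^r$ in $G$. The heart of the argument is the splitting lemma: $\langle a \rangle$ is a direct summand, i.e.\ there is a subgroup $H$ with $G = \langle a \rangle \times H$ (internal direct product). Granting this, $|H| < |G|$, so $H$ decomposes into cyclic $p$-groups by the inductive hypothesis, and adjoining $\langle a \rangle$ completes the existence proof.

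For uniqueness I would extract the multiplicities of the cyclic factors from invariants of $G$ that do not refer to any chosen decomposition. Since the primary components $G_p$ are canonical, it suffices to treat a single prime. For an abelian $p$-group $G$ and $k \geq 1$, the subgroups $p^k G = \{p^k x : x \in G\}$ are characteristic, and each quotient $p^{k-1}G/p^k G$ is an $\mathbb{F}_p$-vector space. If $G \cong \prod_i \mathbb{Z}_{p^{e_i}}$, a direct computation gives $\dim_{\mathbb{F}_p}(p^{k-1}G/p^k G) = \#\{i : e_i \geq k\}$; taking successive differences recovers the number of factors of each exponent. Hence the multiset $\{e_i\}$ is determined by $G$, which is exactly uniqueness of the orders and of the number of terms.

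The main obstacle is the splitting lemma. The standard route is to take $H$ maximal among subgroups with $H \cap \langle a \rangle = \{0\}$ and to prove $G = \langle a \rangle + H$. If not, one finds $g \notin \langle a \rangle + H$ with $pg \in \langle a \rangle + H$, writes $pg = ka + h$ with $h \in H$, and applies $p^{r-1}$: since $a$ has \emph{maximal} order, $p^r g = 0$, and $\langle a \rangle \cap H = \{0\}$ then forces $p \mid k$. Setting $g' = g - (k/p)a$ gives $pg' \in H$ while $g' \notin \langle a \rangle + H$, so $H + \langle g' \rangle$ properly enlarges $H$ yet still meets $\langle a \rangle$ trivially, contradicting maximality. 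The maximal-order hypothesis is indispensable precisely at the step forcing $p \mid k$; without it the adjustment of $g$ by a multiple of $a$ need not land inside $H$.
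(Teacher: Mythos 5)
Your proof is correct: the primary decomposition, the induction for $p$-groups via the splitting lemma for the cyclic subgroup generated by an element of maximal order, and the uniqueness argument via the $\mathbb{F}_p$-dimensions of the quotients $p^{k-1}G/p^kG$ are all sound, and the steps you leave implicit (directness of the primary sum, existence of the element $g$, and the check that $H+\langle g'\rangle$ still meets $\langle a\rangle$ trivially) are routine. Note, however, that the paper does not prove this statement at all --- Theorem 3.12 is quoted with a citation to [HE; 75, Theorem 2.14.1] --- and your argument is essentially the classical textbook proof found in that reference, so there is nothing in the paper to compare it against.
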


\bigskip

It is known that the lattice of ideals of a commutative unitary ring is a
modular lattice (with respect to set inclusion), see [Bl; 53].

\begin{corollary}
\label{Corollary 3.13} \textit{\ Any finite commutative unitary ring has
Chang property.}
\end{corollary}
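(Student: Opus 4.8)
The plan is to reduce the statement to two ingredients that are already available in the paper: the fact that each ``prime-power factor'' $\mathbb{Z}_{p^{\alpha}}$ has Chang property, and the fact that Chang property is inherited by finite direct products. By the standing convention of the paper, a finite commutative unitary ring is of the form $A=\mathbb{Z}_{k_{1}}\times \mathbb{Z}_{k_{2}}\times \cdots \times \mathbb{Z}_{k_{r}}$ with $k_{i}=p_{i}^{\alpha_{i}}$, so it suffices to establish these two steps and then combine them.

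For the first ingredient, I would observe that each factor $\mathbb{Z}_{k_{i}}=\mathbb{Z}/(p_{i}^{\alpha_{i}})$ is a ring factor of $\mathbb{Z}$, which is a principal ideal domain. Theorem \ref{Theorem 3.10} then immediately gives that every $\mathbb{Z}_{k_{i}}$ has Chang property; equivalently, by Theorem \ref{Theorem 3.4}, each $Id(\mathbb{Z}_{k_{i}})$ is an MV-algebra.

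For the second ingredient, the key observation is that the ideals of a finite product ring decompose componentwise. Writing $A=A_{1}\times A_{2}$ with $A_{1},A_{2}$ unitary, the orthogonal idempotents $(1,0)$ and $(0,1)$ force every ideal $I\in Id(A)$ to split as $I=I_{1}\times I_{2}$ with $I_{j}\in Id(A_{j})$. One then checks directly that the sum and the ideal quotient are computed slotwise, namely $I+J=(I_{1}+J_{1})\times (I_{2}+J_{2})$ and $(J:I)=(J_{1}:I_{1})\times (J_{2}:I_{2})$; the second identity holds because, for $x=(x_{1},x_{2})$, the inclusion $xI\subseteq J$ is equivalent to $x_{1}I_{1}\subseteq J_{1}$ and $x_{2}I_{2}\subseteq J_{2}$. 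Consequently $(J:(J:I))=(J_{1}:(J_{1}:I_{1}))\times (J_{2}:(J_{2}:I_{2}))$, and if both factors have Chang property this equals $(I_{1}+J_{1})\times (I_{2}+J_{2})=I+J$. An induction on $r$ then transports Chang property from the factors $\mathbb{Z}_{k_{i}}$ to $A$, which completes the argument.

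I expect the main obstacle to be this second step: rigorously justifying that every ideal of the product is a product of ideals (via the idempotent decomposition and unitality of the factors) and that the quotient operation $(\,:\,)$ respects this decomposition. Once these slotwise formulas are in hand, the Chang identity $I+J=(J:(J:I))$ propagates through products purely formally. Alternatively, one may phrase the second step as the assertion that $Id(A)\cong \prod_{i=1}^{r} Id(\mathbb{Z}_{k_{i}})$ as residuated lattices and invoke closure of the variety of MV-algebras under direct products together with Theorem \ref{Theorem 3.4}.
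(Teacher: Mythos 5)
Your proposal is correct and follows essentially the same route as the paper: the paper's proof also writes $A=\mathbb{Z}_{k_{1}}\times \cdots \times \mathbb{Z}_{k_{r}}$, asserts $Id(A)=Id(\mathbb{Z}_{k_{1}})\times \cdots \times Id(\mathbb{Z}_{k_{r}})$, and then applies Theorem \ref{Theorem 3.10} to each factor. The only difference is one of detail, not of method: you explicitly verify the slotwise decomposition of ideals, sums and quotients (the step the paper leaves implicit), and you invoke the standing convention on the form of $A$ directly rather than via Theorem \ref{Theorem 3.12}.
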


\begin{proof}
Let $A$ be a finite commutative unitary ring with $\left\vert A\right\vert
=n=p_{1}^{\alpha _{1}}\cdot ...\cdot p_{r}^{\alpha _{r}}.$ We prove that the
set of all ideals of the rings $A$ forms an MV-algebra $\left( Id\left(
A\right) ,\vee ,\wedge ,\odot ,\rightarrow ,0,1\right) $ in which the order
relation is $\subseteq $, $I\odot J=I\otimes J$, $I^{\ast }=Ann(I)$, $%
I\rightarrow J=\left( J:I\right) $, $I\vee J=I+J$, $I\wedge J=I\cap J$, $%
0=\{0\}$ and $1=A$.\medskip\

From Theorem \ref{Theorem 3.12}, we know that $A$ has the form $A=\mathbb{Z}%
_{k_{1}}\times \mathbb{Z}_{k_{2}}\times ...\times \mathbb{Z}_{k_{r}},$ where
$k_{i}=p_{i}^{\alpha _{i}}$ and $p_{i}$ a prime number, for all $i\in
\{1,2,...,r\}$. \

It results that $Id\left( A\right) =$ $Id(\mathbb{Z}_{k_{1}})\times Id(%
\mathbb{Z}_{k_{2}})\times ...\times Id(\mathbb{Z}_{k_{r}})$. \ Now, we apply
Theorem \ref{Theorem 3.10}.

We recall that in an MV-algebra $(A,\oplus ,\odot ,^{\ast },0,1)$ \ we have $%
x\oplus y=(x^{\ast }\odot y^{\ast })^{\ast }=x^{\ast }\rightarrow y=y^{\ast
}\rightarrow x$ and $x\rightarrow y=x^{\ast }\oplus y=(x\odot y^{\ast
})^{\ast },$ for every $x,y\in A,$ see [COM; 00] and [MU; 07]. Using these
equalities, in MV-algebra \ $Id\left( A\right) $ we obtain:
\begin{equation*}
I\oplus J=Ann(Ann(I)\otimes Ann(J))=(J:Ann(I))=(I:Ann(J))
\end{equation*}

and%
\begin{equation*}
I\rightarrow J=(J:I)=Ann(I)\oplus J=Ann(I\otimes Ann(J)),
\end{equation*}

for every $I,J\in Id\left( A\right) $, since, in this case, $Ann(Ann(I))=I.$
\end{proof}

\begin{remark}
\label{Remark 3.14} If $A$ is a finite commutative ring with $\left\vert
A\right\vert =n=p_{1}^{\alpha _{1}}\cdot ...\cdot p_{r}^{\alpha _{r}},$ then
\textit{the set} $Id\left( A\right) $ \textit{has} $\mathcal{N}_{A}=\overset{%
r}{\underset{i=1}{\prod }}\left( \alpha _{i}+1\right) $ \textit{elements}.
\end{remark}

\begin{remark}
\label{Remark 3.16} \ If the commutative ring $A$ is not finite, then $Id(A)$
is not necessary an MV-algebra. For example, $Id(\mathbb{Z},+,\cdot )$ is
only a noncommutative BCK-algebra, see Example \ref{Example 3.15}.\newline
\end{remark}

The following results can be obtained by the straightforward calculations.

\begin{proposition}
\label{Proposition 3.17} \textit{Let }$A$\textit{\ be a finite commutative
unitary ring. Then:}

(i) $\ (Id(A),\cap ,+,0=\{0\},1=A)$\textit{\ is a bounded distributive
lattice in which }%
\begin{equation*}
I\vee J=I+J=(I:(I:J))=(J:(J:I)),
\end{equation*}%
\begin{equation*}
I\wedge J=I\cap J=(J:I)\otimes I=(I:J)\otimes J,
\end{equation*}%
\textit{for every }$I,J\in Id(A).$

(ii) $Ann(Ann(I))=I,$\textit{\ for every }$I\in Id(A);$

(iii) $I\oplus Ann(I)=A,$\textit{\ for every }$I\in Id(A);$

(iv) $Ann(I\cap J)=Ann(I)+Ann(J),$\textit{\ for every }$I,J\in Id(A).$
\end{proposition}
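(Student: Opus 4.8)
The plan is to route everything through the MV-algebra structure on $Id(A)$ and then transcribe the relevant MV-algebra identities into ideal-theoretic language. Since $A$ is finite, Corollary \ref{Corollary 3.13} shows that $A$ has Chang property, and hence by Theorem \ref{Theorem 3.4} the structure $\left( Id(A),\vee ,\wedge ,\odot ,\rightarrow ,0,1\right) $ is an MV-algebra, with the concrete operations already identified in the proof of Corollary \ref{Corollary 3.13}: $I\vee J=I+J$, $I\wedge J=I\cap J$, $I\odot J=I\otimes J$, $I^{\ast }=Ann(I)$, $I\rightarrow J=(J:I)$, $0=\{0\}$, $1=A$, together with $I\oplus J=Ann\left( Ann(I)\otimes Ann(J)\right) $. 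With these identifications each of the four items becomes a known fact about MV-algebras, so no new machinery is needed.

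For (i) I would invoke that the natural order of any MV-algebra determines a bounded distributive lattice, as recalled in the Preliminaries; applied to $Id(A)$ this yields the bounded distributive lattice $(Id(A),\cap ,+,\{0\},A)$ with join $+$ and meet $\cap $. The two alternative descriptions of the join, $I+J=(I:(I:J))=(J:(J:I))$, are exactly Chang property (Definition \ref{Definition 3.3}) combined with Corollary \ref{Corollary 3.6}(iii), and the two descriptions of the meet, $I\cap J=(J:I)\otimes I=(I:J)\otimes J$, are furnished by Corollary \ref{Corollary 3.6}(iv). Item (ii), namely $Ann(Ann(I))=I$, is the double negation identity $I^{\ast \ast }=I$, again supplied by Corollary \ref{Corollary 3.6}(iv).

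For (iii) I would use the MV-algebra identity $x\oplus x^{\ast }=1$, which holds because $x\oplus x^{\ast }=x^{\ast }\rightarrow x^{\ast }=1$; read off in $Id(A)$ this gives $I\oplus Ann(I)=A$. Alternatively, one can compute directly from the concrete form of $\oplus $: $I\oplus Ann(I)=Ann\left( Ann(I)\otimes Ann(Ann(I))\right) =Ann\left( Ann(I)\otimes I\right) =Ann(\mathbf{0})=A$, using Proposition \ref{Proposition 3.2}(iii) and Remark \ref{Remark 2.1}(4). For (iv), rather than quoting the abstract De Morgan law $(x\wedge y)^{\ast }=x^{\ast }\vee y^{\ast }$, I would give a self-contained derivation from Remark \ref{Remark 2.1}(5): applying $Ann(X+Y)=Ann(X)\cap Ann(Y)$ with $X=Ann(I)$ and $Y=Ann(J)$ yields, via (ii), the identity $Ann\left( Ann(I)+Ann(J)\right) =I\cap J$; applying $Ann$ once more and using (ii) on the left-hand side delivers $Ann(I)+Ann(J)=Ann(I\cap J)$.

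There is essentially no deep obstacle here, since the statement sits downstream of the MV-algebra structure established earlier; the only point requiring care is bookkeeping, namely making sure the abstract MV-operations are consistently matched with their concrete ideal counterparts. In particular one must keep $Ann$ in the role of the negation $^{\ast }$ throughout, take the lattice join to be $+$ (not the product $\otimes $), and use the specific sum $I\oplus J=Ann\left( Ann(I)\otimes Ann(J)\right) $ when verifying (iii). Once these identifications are fixed, each item is either a direct citation of Corollary \ref{Corollary 3.6} or a one-line computation, which is precisely why the statement can be obtained by straightforward calculation.
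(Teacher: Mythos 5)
Your proposal is correct and follows exactly the route the paper intends: the paper offers no explicit proof beyond ``straightforward calculations,'' and its placement of the proposition immediately after Corollary \ref{Corollary 3.13} and Corollary \ref{Corollary 3.6} signals precisely your argument, namely invoking the Chang property to get the MV-algebra structure on $Id(A)$ and then reading off (i)--(iv) from Corollary \ref{Corollary 3.6}, the identity $I\oplus Ann(I)=Ann\left(Ann(I)\otimes I\right)=Ann(\mathbf{0})=A$, and the De Morgan computation via Remark \ref{Remark 2.1}(5). Your bookkeeping of the correspondence between the abstract MV-operations and the ideal operations is accurate, so nothing is missing.
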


We recall that a residuated lattice $(L,\vee ,\wedge ,\odot ,\rightarrow
,0,1)$ in which $x^{2}=x,$ for all $x\in L,$ is called a \emph{Heyting
algebra (or\ G(RL)\ algebra or pseudo Boolean algebra}), see [I; 09], [GDCK;
10], [T; 99].

In [BNM; 10] was proved that unitary commutative rings for which the
semiring of ideals, under ideal sum and ideal product, are Heyting algebras
are exactly Von Neumann regular rings, i.e. commutative rings $A$ in which
for every element $x\in A$ there exists an element $a\in A$ such that $%
x=a\cdot x^{2}.$

\medskip

In the following, we give new characterizations for commutative rings in
which the lattice of ideals is a Heyting algebra.

\begin{proposition}
\label{Proposition 3.18.1} \textit{Let }$(L,\vee ,\wedge ,\odot ,\rightarrow
,0,1)$\textit{\ be a residuated lattice. The following assertions are
equivalent:}

(i) for $x,y,z\in L,$ if $x\odot y\leq z$ and $x\leq y,$ then $x\leq z;$

(ii) for every $z\in L,$ $D_{z}=\{x\in L:z\leq x\}$ is a deductive system of
$L;$

(iii) $x^{2}\leq y$ implies $x\leq y,$ \ for every $x,y\in L;$

(iv) $x\odot y\leq z$ implies $x\odot (x\rightarrow y)\leq z,$ for every $%
x,y,z\in L;$

(v) $x=x^{2},$ \ for every $x\in L.$
\end{proposition}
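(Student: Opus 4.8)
The plan is to take condition $(v)$, idempotency, as the hub and prove it equivalent to each of the other four, relying throughout on three standing facts valid in every residuated lattice: first, $x^{2}=x\odot x\leq x$ always holds (since $x\odot x\leq x\odot 1=x$), so idempotency amounts to the reverse inequality $x\leq x^{2}$; second, $x\odot(x\rightarrow y)\leq y$ (immediate from LR3 applied to $z=x\rightarrow y$) together with the monotonicity of $\odot$; and third, the observation that idempotency is equivalent to $\odot$ agreeing with $\wedge$, because $x\wedge y=(x\wedge y)\odot(x\wedge y)\leq x\odot y\leq x\wedge y$ whenever every element is idempotent. I would first dispatch $(iii)\Leftrightarrow(v)$: the direction $(v)\Rightarrow(iii)$ is trivial since $x^{2}=x$ turns $x^{2}\leq y$ into $x\leq y$, and for $(iii)\Rightarrow(v)$ I would simply set $y=x^{2}$, so that $x^{2}\leq x^{2}$ forces $x\leq x^{2}$, which with $x^{2}\leq x$ gives $x=x^{2}$.

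Next I would obtain $(i)\Leftrightarrow(iii)$ and $(iv)\Leftrightarrow(iii)$ purely by specializing variables. For $(i)\Rightarrow(iii)$, put $x=y$ in $(i)$: the hypothesis $x\odot y\leq z$ becomes $x^{2}\leq z$ and $x\leq y$ holds automatically, so the conclusion $x\leq z$ is exactly $(iii)$. Conversely, for $(iii)\Rightarrow(i)$, from $x\leq y$ and monotonicity of $\odot$ one gets $x^{2}=x\odot x\leq x\odot y\leq z$, whence $(iii)$ yields $x\leq z$. For $(iv)\Rightarrow(iii)$, put $y=x$: then $x\rightarrow x=1$ and $x\odot(x\rightarrow x)=x\odot 1=x$, so $(iv)$ reads ``$x^{2}\leq z$ implies $x\leq z$,'' i.e. $(iii)$. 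For the return $(iii)\Rightarrow(iv)$ I would invoke $\odot=\wedge$ (already available from $(iii)\Leftrightarrow(v)$): given $x\odot y\leq z$, the inequalities $x\odot(x\rightarrow y)\leq x$ and $x\odot(x\rightarrow y)\leq y$ give $x\odot(x\rightarrow y)\leq x\wedge y=x\odot y\leq z$.

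The remaining and only delicate equivalence is $(ii)\Leftrightarrow(v)$, where one must unpack the defining modus-ponens closure of a deductive system ($1\in D$ and $x\in D,\ x\rightarrow y\in D\Rightarrow y\in D$). For $(v)\Rightarrow(ii)$ I would check that each principal up-set $D_{z}=\{x:z\leq x\}$ is a deductive system: it is upward closed and contains $1$ by construction, and if $z\leq x$ and $z\leq x\rightarrow y$ then monotonicity and idempotency give $z=z\odot z\leq x\odot(x\rightarrow y)\leq y$, so $y\in D_{z}$. For the converse $(ii)\Rightarrow(v)$, I would apply the closure property of $D_{x}$ with the witnesses $a=x$ and $b=x^{2}$: here $x\in D_{x}$ trivially, and $x\rightarrow x^{2}\in D_{x}$ because $x\leq x\rightarrow x^{2}$ is equivalent under LR3 to $x^{2}\leq x^{2}$; hence the deductive-system closure forces $x^{2}\in D_{x}$, i.e. $x\leq x^{2}$, giving idempotency.

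I expect the main obstacle to be precisely this $(ii)$ step, since it is the one that does not reduce to a mechanical substitution: the forward direction hinges on using idempotency to collapse $z\odot z$ back to $z$ so that the product $x\odot(x\rightarrow y)$ can absorb it, and the backward direction hinges on spotting the correct witnesses and recognizing $x\rightarrow x^{2}$ as a member of $D_{x}$ via the adjunction. Everything else is a matter of specializing the universally quantified variables and invoking the two elementary inequalities $x^{2}\leq x$ and $x\odot(x\rightarrow y)\leq y$.
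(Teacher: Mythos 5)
Your proof is correct, but it is organized differently from the paper's. The paper proves a cycle $(i)\Rightarrow(ii)\Rightarrow(iii)\Rightarrow(i)$ and then attaches $(iv)$ and $(v)$ to $(iii)$, and its arguments run through the implication calculus: for instance $(iii)\Rightarrow(iv)$ is obtained from $y\rightarrow z\leq (x\rightarrow y)\rightarrow(x\rightarrow z)$ and the exchange $x\rightarrow(y\rightarrow z)=y\rightarrow(x\rightarrow z)$, and $(iii)\Rightarrow(i)$ likewise goes through nested implications rather than through the product. You instead use $(v)$ (equivalently $(iii)$) as a hub and tie every other condition to it by a short substitution or monotonicity argument: your $(iii)\Rightarrow(i)$ is just $x^{2}=x\odot x\leq x\odot y\leq z$, and your $(iii)\Rightarrow(iv)$ exploits the fact that idempotency collapses $\odot$ to $\wedge$, giving $x\odot(x\rightarrow y)\leq x\wedge y=x\odot y\leq z$ --- both noticeably more elementary than the paper's manipulations. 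For $(ii)$, the paper routes through $(i)$ (using $(i)$ to get modus ponens in $D_{z}$, and conversely), while you connect $(ii)$ directly to $(v)$; your witnesses $x$ and $x\rightarrow x^{2}\in D_{x}$ are essentially the paper's own device in its $(ii)\Rightarrow(iii)$ step, specialized to $y=x^{2}$. What the paper's route buys is that all verifications stay in the language of $\rightarrow$, which matches how the result is then transported to ideals via $(J:I)$; what your route buys is modularity and transparency --- each equivalence is a one-line reduction to idempotency, the single non-mechanical step $(ii)\Leftrightarrow(v)$ is cleanly isolated, and the identity $\odot=\wedge$ makes visible why such lattices are exactly the Heyting algebras the paper is after.
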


\begin{proof}
$(i)\Rightarrow (ii).$ Since for every $z\in L,z\leq 1,$ we have that $1\in
D_{z}.$ If $x,x\rightarrow y\in D_{z},$ then $z\leq x,x\rightarrow y.$ Using
(i), $z\leq y,$ hence $y\in D_{z},$ that is, $D_{z}$ is a deductive system
of $L.$

$(ii)\Rightarrow (i).$ Let $x,y,z\in L$ such that $x\odot y\leq z$ and $%
x\leq y.$ Then $y,y\rightarrow z\in D_{x}.$ Since $D_{x}$ is a deductive
system of $L$, we obtain $z\in D_{x}$ so, $x\leq z\mathbf{.}$

$(ii)\Rightarrow (iii).$ Let $x,y\in L$ such that $x^{2}\leq y.$ Then $%
x\rightarrow y\in D_{x}.$ Since $x,x\rightarrow y\in D_{x},$ we deduce that $%
y\in D_{x},$ that is, $x\leq y.$

$(iii)\Rightarrow (iv).$ Let $x,y,z\in L$ such that $x\odot y\leq z$ $.$
Since $y\rightarrow z\leq (x\rightarrow y)\rightarrow (x\rightarrow z),$ we
obtain $1=(x\odot y)\rightarrow z$ $=x\rightarrow (y\rightarrow z)\leq
x\rightarrow \lbrack (x\rightarrow y)\rightarrow (x\rightarrow z)],$ hence $%
x\leq (x\rightarrow y)\rightarrow (x\rightarrow z).$ Finally, we obtain that
$x\leq x\rightarrow \lbrack (x\rightarrow y)\rightarrow z].$

By hypothesis, we deduce that $x\leq (x\rightarrow y)\rightarrow z,$ hence $%
x\odot (x\rightarrow y)\leq z.$

$(iv)\Rightarrow (iii).$ Consider $y,z\in L$ such that $y^{2}\leq z.$ By
hypothesis, $y\odot (y\rightarrow y)\leq z,$ that is, $y\leq z.$

$(iii)\Rightarrow (i).$ Let $x,y,z\in L$ such that $x\odot y\leq z$ and $%
x\leq y.$

We have $1=x\rightarrow (y\rightarrow z)=y\rightarrow (x\rightarrow z)$ $%
\leq (x\rightarrow y)\rightarrow (x\rightarrow (x\rightarrow z)),$ hence $%
x\rightarrow y\leq x\rightarrow (x\rightarrow z).$ Then $x\leq x\rightarrow
z,$ so, we obtain that $x\leq z\mathbf{.}$

$(iii)\Rightarrow (v).$ We have $x^{2}\leq x^{2},$ hence $x\leq x^{2}.$ We
deduce that $x=x^{2},$ \ for every $x\in L.$

$(v)\Rightarrow (iii).$ Let $x,y\in L$ such that $x^{2}\leq y.$ Since $%
x=x^{2},$ we deduce that $x\leq y.$
\end{proof}

\begin{theorem}
\label{Theorem 3.18}\textit{\ Let }$A$\textit{\ be a commutative ring. The
following conditions are equivalent:}

\textit{(i) }$A$\textit{\ is a Von Neumann regular ring;}

\textit{(ii) }$(J:I)\otimes I=I\otimes J=I\cap J,$\textit{\ for every }$%
I,J\in Id(A);$

\textit{(iii) } for \textit{\ }$I,J,K\in Id(A),$ if $I\otimes J\subseteq K$
and $I\subseteq J,$ then $I\subseteq K;$

\textit{(iv) }for every $K\in Id(A),$ $D_{K}=\{I\in Id(A):K\subseteq I\}$ is
a deductive system of the residuated lattice $(Id(A),\cap ,+,\otimes
\rightarrow ,0=\{0\},1=A);$

\textit{(v) }$I\otimes I\subseteq J$ implies $I\subseteq J,$ \ for every $%
I,J\in Id(A);$

\textit{(vi) }$I\otimes J\subseteq K$ implies $I\otimes (J:I)\subseteq K,$
for every \textit{\ }$I,J,K\in Id(A).$
\end{theorem}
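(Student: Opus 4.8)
The plan is to exploit the residuated lattice structure of $(Id(A),\cap ,+,\otimes ,\rightarrow ,0,1)$ and to read off most of the equivalences directly from Proposition \ref{Proposition 3.18.1}. First I would note the dictionary $\odot \mapsto \otimes $, $\leq \mapsto \subseteq $, $x\rightarrow y\mapsto (J:I)$ and $x^{2}\mapsto I\otimes I$, under which conditions (iii), (iv), (v), (vi) of the theorem are word-for-word transcriptions of items (i), (ii), (iii), (iv) of Proposition \ref{Proposition 3.18.1}. Applying that proposition to $L=Id(A)$ therefore shows at once that (iii), (iv), (v), (vi) are mutually equivalent and that each is equivalent to item (v) there, i.e. to the single idempotency condition
\begin{equation*}
I\otimes I=I\quad \text{for every }I\in Id(A).
\end{equation*}
This collapses the theorem to linking this idempotency condition with (i) and with (ii).

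Next I would identify idempotency with Von Neumann regularity. The condition $I\otimes I=I$ for all $I$ is exactly the assertion that the residuated lattice $Id(A)$ is a Heyting algebra, so the result of [BNM; 10] recalled above gives that it is equivalent to (i). Should a self-contained argument be preferred, it is immediate: if $A$ is Von Neumann regular and $x\in I$, then $x=ax^{2}=(ax)\cdot x\in I\otimes I$, whence $I\subseteq I\otimes I\subseteq I$; conversely, specialising $I\otimes I=I$ to $I=<x>$ gives $<x>=<x^{2}>=A\cdot x^{2}$, so $x=ax^{2}$ for some $a\in A$.

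It remains to fit (ii) into the chain. For (ii) $\Rightarrow $ (i) I would specialise $I\otimes J=I\cap J$ to $J=I$, obtaining the idempotency condition $I\otimes I=I\cap I=I$. For (i) $\Rightarrow $ (ii) I would first record that in a Heyting algebra the monoid product coincides with the meet: since $I\otimes J\subseteq I$ and $I\otimes J\subseteq J$ one has $I\otimes J\subseteq I\cap J$, while idempotency of $I\cap J$ and monotonicity give $I\cap J=(I\cap J)\otimes (I\cap J)\subseteq I\otimes J$, so $I\otimes J=I\cap J$. For the first equality of (ii), Remark \ref{Remark 2.1}(8) yields $(J:I)\otimes I\subseteq J$ and trivially $(J:I)\otimes I\subseteq I$, hence $(J:I)\otimes I\subseteq I\cap J$; conversely $J\subseteq (J:I)$ by Remark \ref{Remark 2.1}(7), so $I\cap J\subseteq I\cap (J:I)=(J:I)\otimes I$, the last step again using $\otimes =\cap $. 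This establishes $(J:I)\otimes I=I\otimes J=I\cap J$.

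The bulk of the work is organisational: the genuinely new content sits in Proposition \ref{Proposition 3.18.1} and in the Heyting/Von Neumann correspondence, both available to us. Consequently the only direction needing real care is (i) $\Rightarrow $ (ii), and inside it the single nontrivial point is that idempotency of every ideal forces $\otimes $ to collapse onto $\cap $; once that is in hand the two displayed equalities of (ii) follow from the elementary inclusions of Remark \ref{Remark 2.1}. I would therefore concentrate the verification on invoking $\otimes =\cap $ and the inclusion $J\subseteq (J:I)$ correctly, treating the equivalences among (iii)--(vi) as a direct appeal to Proposition \ref{Proposition 3.18.1}.
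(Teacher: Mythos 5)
Your proposal is correct, and it follows the paper's global decomposition: like the paper, you dispose of the equivalences (iii)--(vi) by a direct appeal to Proposition \ref{Proposition 3.18.1} (the dictionary you set up is exactly right), and you route everything through the idempotency condition $I\otimes I=I$. The differences are local but worth noting. First, the paper leaves the bridge between Von Neumann regularity and idempotency of all ideals implicit, borrowing it from the result of [BNM; 10] recalled just before the theorem; you prove it directly ($x=ax^{2}=(ax)x\in I\otimes I$ for one direction, and $\langle x\rangle=\langle x\rangle\otimes\langle x\rangle=Ax^{2}$ for the other), which makes your argument self-contained. Second, for (i)$\Rightarrow$(ii) the paper works inside the residuated lattice machinery: it proves $(J:I)\otimes I\subseteq I\otimes J$ via adjointness combined with the identity $(((I\otimes J):I):I)=((I\otimes J):(I\otimes I))=((I\otimes J):I)$, gets the reverse inclusion from $J\subseteq (J:I)$, and then shows $I\otimes J=I\cap J$ by observing that any $K\subseteq I,J$ satisfies $K=K^{2}\subseteq I\otimes J$. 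You instead first collapse $\otimes$ onto $\cap$ (using $I\cap J=(I\cap J)\otimes(I\cap J)\subseteq I\otimes J$) and then read both equalities of (ii) off the elementary inclusions of Remark \ref{Remark 2.1}(7),(8). Your verification of this implication is somewhat cleaner and more elementary, while the paper's emphasizes the residuation identities; both are sound, and the structural content (Proposition \ref{Proposition 3.18.1} plus the Heyting/Von Neumann correspondence) is the same in each.
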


\begin{proof}
$\ (i)\Rightarrow (ii).$ Let $I,J\in Id(A).$ Using $LR_{3}$ and the
properties of residuated lattices (see [BT; 03]) we obtain:
\begin{equation*}
(J:I)\otimes I\subseteq ((I\otimes J):(I\otimes I))\Leftrightarrow
(J:I)\otimes I\subseteq ((I\otimes J):I)\Leftrightarrow
\end{equation*}%
\begin{equation*}
(J:I)\subseteq (((I\otimes J):I):I)=((I\otimes J):(I\otimes I))=((I\otimes
J):I).
\end{equation*}%
We deduce that $(J:I)\otimes I\subseteq I\otimes J.$ Since $J\subseteq (J:I),
$ then $I\otimes J\subseteq (J:I)\otimes I,$ so $I\otimes J=(J:I)\otimes I.$
Clearly, $I\otimes J\subseteq I,J.$ To prove that $I\otimes J=I\cap J,$ let $%
K\in Id(A)$ such that $K\subseteq I,J.$ Then $K=K^{2}\subseteq I\otimes J,$
that is, $I\otimes J=I\cap J.$ $\ \ \ \ \ \ \ \ \ \ $

$(ii)\Rightarrow (i).$ For $I=J$ we obtain $I\otimes I=I\cap I=I,$ that is, $%
I\otimes I=I,$ for every $I\in Id(A).$

$\ (i)\Longleftrightarrow (iii)\Longleftrightarrow (iv)\Longleftrightarrow
(v)\Longleftrightarrow (vi).$ Follows by Proposition \ref{Proposition 3.18.1}%
.
\end{proof}

\begin{remark}
\label{Remark 3.19}\textit{\ \ Boolean rings are Von Neumann regular rings.
Indeed, since in a Boolean ring $A$, every ideal is idempotent, using
Theorem \ref{Theorem 3.18}, we obtain that} $Id(A)$\textit{\ \ is a Heyting
algebra. }
\end{remark}

\medskip

We recall the following result that characterizes unitary rings for which
the lattice of ideals is a Boolean algebra:

\begin{theorem}
\label{Theorem 3.20}\textit{[Bl; 53], [CL; 19] \ For a ring }$A,$\textit{\
the following assertions are equivalent:}

\textit{(i) The lattice }$Id(A)$\textit{\ is a Boolean algebra;}

\textit{(ii) }$A$\textit{\ is isomorphic to a finite direct sum of division
rings;}

\textit{(iii) }$A$\textit{\ is isomorphic to a finite direct sum of simple
rings;}

\textit{(iv) Ideals form both a Heyting algebra and an ortholattice.}
\end{theorem}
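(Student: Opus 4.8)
The plan is to separate the four conditions into a purely lattice-theoretic equivalence and a ring-theoretic core, linked by the correspondence between complemented ideals and central idempotents. First I would dispose of $(i)\Leftrightarrow (iv)$ by lattice theory alone: every Heyting algebra is a distributive lattice, and a distributive lattice equipped with an orthocomplementation is exactly a Boolean algebra (the orthocomplement is then the unique lattice complement), which gives $(iv)\Rightarrow (i)$; conversely a Boolean algebra is a Heyting algebra under $I\rightarrow J=I^{\prime }\vee J$ and an ortholattice under its unique complementation, giving $(i)\Rightarrow (iv)$. This step uses nothing about rings beyond the fact, already recorded in the paper, that $Id(A)$ is a bounded lattice.

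Next I would handle the two easy ring-theoretic implications. For $(ii)\Rightarrow (iii)$ it suffices to recall that a division ring has only the trivial two-sided ideals $\mathbf{0}$ and itself, hence is simple, so a finite direct sum of division rings is a finite direct sum of simple rings. For $(iii)\Rightarrow (i)$, write $A\cong S_{1}\oplus \dots \oplus S_{n}$ with each $S_{k}$ simple; using the central idempotents $e_{k}=(0,\dots ,1,\dots ,0)$ one checks that every ideal of $A$ is a product $I_{1}\times \dots \times I_{n}$ with $I_{k}\in Id(S_{k})$, so $Id(A)\cong \prod_{k}Id(S_{k})$. Since each $Id(S_{k})=\{\mathbf{0},S_{k}\}$ is the two-element Boolean algebra, $Id(A)$ is a finite Boolean algebra.

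The real work is $(i)\Rightarrow (iii)$. Assuming $Id(A)$ is Boolean, I would first show every ideal is generated by a central idempotent: if $I$ has complement $J$, then $I+J=A$ and $I\cap J=\mathbf{0}$ force a decomposition $1=e+f$ into orthogonal central idempotents with $I=Ae$ and $J=Af$; thus $I\mapsto e_{I}$ is an order-isomorphism of $Id(A)$ onto the Boolean algebra $B(A)$ of central idempotents. The crucial finiteness step is extracted from the existence of the unit: if $\{e_{i}\}$ were an infinite family of nonzero orthogonal central idempotents, their join in $Id(A)$ would equal $Ae=\sum_{i}Ae_{i}$ for some idempotent $e$, and writing $e$ as a finite sum of elements of the $Ae_{i}$ would force $e_{j}=e\,e_{j}=0$ for all indices $j$ outside that finite set, by orthogonality --- a contradiction. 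Hence $B(A)$ has no infinite orthogonal family and is therefore finite, so $Id(A)\cong B(A)$ has finitely many atoms, which are primitive central idempotents $e_{1},\dots ,e_{n}$ with $1=e_{1}+\dots +e_{n}$ and $A=Ae_{1}\oplus \dots \oplus Ae_{n}$. Since the ideals of $A$ lying below the atom $Ae_{k}$ are only $\mathbf{0}$ and $Ae_{k}$, each factor $Ae_{k}$ is simple, which is $(iii)$.

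It remains to close the loop at $(ii)$, and this is where I expect the genuine obstacle to sit. The implication $(iii)\Rightarrow (ii)$ is \emph{false} for arbitrary rings --- for instance $A=M_{2}(D)$ is simple with $Id(A)=\{\mathbf{0},A\}$ Boolean, yet is not a direct sum of division rings --- so the stated equivalence really forces the simple factors produced above to be division rings. In the commutative setting of this paper this is immediate: a commutative simple ring has no nonzero proper ideals, hence every nonzero element is invertible and the factor is a field, which is a division ring. The hard part of the whole argument is therefore the finiteness extraction in $(i)\Rightarrow (iii)$ together with identifying the atomic factors as simple; reconciling $(ii)$ with $(iii)$ is routine once one restricts, as the paper does, to the commutative case, and is precisely the point one must be careful about when quoting the result for a general ring $A$.
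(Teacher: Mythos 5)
The paper offers no proof of Theorem \ref{Theorem 3.20} at all: it is recalled as a known result and attributed to [Bl; 53] and [CL; 19], so there is nothing internal to compare your argument against --- what you have written is an actual proof where the paper has only a citation. Your argument is essentially the standard one underlying those references, and it is correct: $(i)\Leftrightarrow (iv)$ by pure lattice theory (a Heyting algebra is distributive, and a distributive ortholattice is Boolean, complements being unique in a distributive lattice); $(iii)\Rightarrow (i)$ via $Id(A)\cong \prod_{k}Id(S_{k})\cong \{0,1\}^{n}$; and, for the substantive implication $(i)\Rightarrow (iii)$, the correspondence between complemented ideals and central idempotents ($I+J=A$, $I\cap J=\mathbf{0}$ forces $1=e+f$ with $e,f$ orthogonal central idempotents and $I=Ae$), followed by the finiteness extraction: if $\{e_{i}\}$ were an infinite orthogonal family, then $\sum_{i}Ae_{i}=Ae$ and $e$ lies in a finite subsum, killing the remaining $e_{j}$. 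Two points deserve flagging. First, your step ``$B(A)$ admits no infinite orthogonal family, hence is finite'' silently invokes the standard fact that every infinite Boolean algebra contains an infinite pairwise disjoint family of nonzero elements; this needs a one-line splitting recursion or a citation, but it is not a gap in substance. Second, your observation about clause $(ii)$ is exactly right and is the most valuable part of the review: as literally stated ``for a ring $A$'', one has $(iii)\not\Rightarrow (ii)$ (take $A=M_{2}(D)$, simple with $Id(A)=\{\mathbf{0},A\}$ Boolean, yet not a direct sum of division rings), so the four-way equivalence is only valid under the paper's standing commutativity convention, where a simple unitary ring is a field; this is precisely the setting of [CL; 19], and your proof, restricted to that setting, is complete and correct.
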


\medskip

It is known that a residuated lattice $(L,\vee ,\wedge ,\odot ,\rightarrow
,0,1)$ is a Boolean algebra if and only if $x\vee x^{\ast }=1,$ for every $%
x\in L,$ see [GDCK; 10].

\medskip

We give new necessary and sufficient conditions for rings whose lattice of
ideals is a Boolean algebra:

\begin{theorem}
\label{Theorem 3.21}\textit{\ Let }$A$\textit{\ be a commutative ring and }$%
I\in Id(A).$\textit{\ The following assertions are equivalent:}

\textit{(i) }$I$\textit{\ and }$Ann(I)$\textit{\ are coprime;}

\textit{(ii ) }$I\otimes I=I,$\textit{\ }$Ann(Ann(I))=I,$\textit{\ }$%
(Ann(I):I)+(I:Ann(I))=A$\textit{\ and \ }$Ann(I)\otimes (I:Ann(I))=0.$
\end{theorem}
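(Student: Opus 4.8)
The plan is to prove the two implications separately, and it is convenient to name the four clauses of $(ii)$ as $(a)$ $I\otimes I=I$, $(b)$ $Ann(Ann(I))=I$, $(c)$ $(Ann(I):I)+(I:Ann(I))=A$, and $(d)$ $Ann(I)\otimes(I:Ann(I))=\mathbf{0}$. Throughout I would rely on the elementary containments $I\subseteq(I:J)$ and $(I:J)\otimes J\subseteq I$ (Remark \ref{Remark 2.1}(7),(8)), on the fact that a product of ideals is contained in each of its factors, and on the annihilator identities collected in Proposition \ref{Proposition 3.2}.

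For $(i)\Rightarrow(ii)$ I would fix, using coprimality, a decomposition $1=a+b$ with $a\in I$ and $b\in Ann(I)$. To get $(a)$, for $x\in I$ I write $x=xa+xb$; here $xb\in I\otimes Ann(I)=\mathbf{0}$ by Proposition \ref{Proposition 3.2}(iii), while $xa\in I\otimes I$, so $I\subseteq I\otimes I$ and the reverse inclusion is automatic. Clause $(b)$ follows by the same splitting: $I\subseteq Ann(Ann(I))$ always holds, and for $x\in Ann(Ann(I))$ the product $xb$ vanishes because $x$ annihilates $Ann(I)\ni b$, while $xa\in I$, giving $x\in I$. Clause $(c)$ is immediate once one notes $Ann(I)\subseteq(Ann(I):I)$ and $I\subseteq(I:Ann(I))$ (both instances of Remark \ref{Remark 2.1}(7)), so the sum already contains $I+Ann(I)=A$. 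Finally, for $(d)$, the product $(I:Ann(I))\otimes Ann(I)$ lies in $I$ by Remark \ref{Remark 2.1}(8) and in $Ann(I)$ as a factor, hence in $I\cap Ann(I)$, which equals $\{0\}$ by Remark \ref{Remark 2.3}(2); therefore it is $\mathbf{0}$.

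The converse $(ii)\Rightarrow(i)$ is where the real work lies, and the key idea is to show that the two quotient ideals in clause $(c)$ collapse. First, clause $(d)$ reads $(I:Ann(I))\otimes Ann(I)=\mathbf{0}$, so Proposition \ref{Proposition 3.2}(iv) gives $(I:Ann(I))\subseteq Ann(Ann(I))$, which by $(b)$ equals $I$; combined with $I\subseteq(I:Ann(I))$ this forces $(I:Ann(I))=I$. Second, Proposition \ref{Proposition 3.2}(v) applied with both ideals equal to $I$ gives $Ann(I\otimes I)=(Ann(I):I)$, and since $(a)$ says $I\otimes I=I$, this yields $(Ann(I):I)=Ann(I)$. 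Substituting both simplifications into clause $(c)$ turns $(Ann(I):I)+(I:Ann(I))=A$ directly into $Ann(I)+I=A$, which is exactly coprimality of $I$ and $Ann(I)$. I expect the main obstacle to be precisely recognizing this collapse: each of the four clauses of $(ii)$ is used, and the nontrivial observation is that $(a)$ with Proposition \ref{Proposition 3.2}(v) forces $(Ann(I):I)=Ann(I)$ while $(b)$ and $(d)$ force $(I:Ann(I))=I$, after which $(c)$ gives the conclusion with no further lattice machinery.
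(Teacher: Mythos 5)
Your proof is correct, and the two directions deserve separate comment. Your converse $(ii)\Rightarrow(i)$ is essentially the paper's own argument: both proofs collapse $(Ann(I):I)$ to $Ann(I)$ using $I\otimes I=I$ (your explicit appeal to Proposition \ref{Proposition 3.2}(v) just makes the paper's unstated justification visible), collapse $(I:Ann(I))$ to $I$ using clauses $(b)$ and $(d)$ via Proposition \ref{Proposition 3.2}(iv), and then read coprimality off clause $(c)$. The forward direction, however, is where you genuinely diverge. The paper treats $I$ as a Boolean element of the residuated lattice $(Id(A),\cap,+,\otimes,\rightarrow,\mathbf{0},A)$: from $I+Ann(I)=A$ it invokes the general theory of Boolean elements (and Remark \ref{Remark 2.3}(1)) to obtain $I\otimes I=I$, $Ann(Ann(I))=I$ and $I\cap Ann(I)=\mathbf{0}$ in one stroke, then finishes clauses $(c)$ and $(d)$ exactly as you do. You instead work entirely at the level of ring elements: fixing $1=a+b$ with $a\in I$, $b\in Ann(I)$, and multiplying through, you get $I\subseteq I\otimes I$ and $Ann(Ann(I))\subseteq I$ directly, with no lattice-theoretic input beyond the elementary facts in Remark \ref{Remark 2.1}. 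Your route is more self-contained and transparent to a reader who knows only commutative ring theory; the paper's route is shorter given the residuated-lattice background it assumes, and it makes clear that the forward implication is really an instance of a general fact about Boolean elements in residuated lattices rather than something special to ideal lattices.
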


\begin{proof}
$\ (i)\Rightarrow (ii).$ If $I+Ann(I)=A,$ using Remark \ref{Remark 2.3} (1),
we deduce that $I\otimes I=I\cap I=I.$ Moreover, $I$ is a Boolean element in
the residuated lattice $(Id(A),\cap ,+,\otimes \rightarrow ,0=\{0\},1=A)$,
so $Ann(Ann(I))=I$ and $I\cap Ann(I)=\mathbf{0}.$ Since $Ann(I)\otimes
(I:Ann(I))\subseteq I\cap Ann(I)=\mathbf{0}$ we deduce that $Ann(I)\otimes
(I:Ann(I))=\mathbf{0}.$

Also, $Ann(I)\subseteq (Ann(I):I)$ and $I\subseteq (I:Ann(I)).$ Then $%
A=Ann(I)+I\subseteq (Ann(I):I)+(I:Ann(I)),$ so $(Ann(I):I)+(I:Ann(I))=A.$

$(ii)\Rightarrow (i).$ From $I\otimes I=I=I,$ we have $(Ann(I):I)=Ann(I).$
Since $Ann(Ann(I))=I,$ and $(I:Ann(I))\otimes Ann(I)=\mathbf{0},$ we deduce
that $(I:Ann(I))=I.$ Then $Ann(I)+I=(Ann(I):I)+(I:Ann(I))=A.$ We conclude
that $I$ and $Ann(I)$ are coprime$.$
\end{proof}

\begin{proposition}
\label{Proposition 3.29} \textit{Let} $A$ \textit{be a commutative unitary
ring,} $x\in A,$ \textit{an idempotent element and} $I=<x>$ \textit{be the
ideal generated by }$x$. \textit{Therefore,} $Ann\left( I\right) $ \textit{%
and} $I$ \textit{are coprime ideals.}
\end{proposition}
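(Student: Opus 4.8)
The claim is that if $x \in A$ is idempotent and $I = \langle x \rangle$, then $I$ and $\mathrm{Ann}(I)$ are coprime, i.e. $I + \mathrm{Ann}(I) = A$. The key observation is that idempotency of $x$ gives a direct complement: since $x^2 = x$ we have $x(1-x) = x - x^2 = 0$, so the element $1-x$ annihilates $x$. My plan is to exploit this to exhibit the unit $1$ explicitly as a sum of an element of $I$ and an element of $\mathrm{Ann}(I)$.

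First I would show that $1 - x \in \mathrm{Ann}(I)$. Because $I = \langle x \rangle$, by Remark \ref{Remark 2.1}(2) it suffices to check that $(1-x)$ kills a generator of $I$, and indeed $(1-x)\cdot x = x - x^2 = 0$; since every element of $I$ is of the form $ax$ for $a \in A$ (as $x$ is idempotent, $\langle x \rangle = Ax$), we get $(1-x)\cdot ax = a\cdot(1-x)x = 0$, so $1 - x \in \mathrm{Ann}(I)$. Next I would write the trivial identity
\begin{equation*}
1 = x + (1 - x),
\end{equation*}
and observe that $x \in I$ while $1 - x \in \mathrm{Ann}(I)$. This immediately places $1 \in I + \mathrm{Ann}(I)$, and since $I + \mathrm{Ann}(I)$ is an ideal containing the unit, it must equal $A$. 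By Definition \ref{Definition 2.2} this is exactly the statement that $I$ and $\mathrm{Ann}(I)$ are coprime.

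There is essentially no hard step here: the entire argument hinges on the single algebraic identity $x(1-x)=0$, which is precisely what idempotency provides, together with the elementary fact that an ideal containing $1$ is the whole ring. The only point requiring a little care is the verification that $1-x$ annihilates all of $I$ and not merely the generator $x$; this is where I would invoke that $\langle x\rangle = Ax$ so that a single computation on $x$ propagates to the whole ideal. I would expect this proposition to serve as the concrete bridge feeding into Theorem \ref{Theorem 3.21}, since it produces exactly the coprimality hypothesis (i) from an idempotent, and hence connects idempotent elements of $A$ to Boolean elements of the lattice $Id(A)$.
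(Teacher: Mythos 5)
Your proof is correct and follows essentially the same route as the paper's: both rest on the identity $1 = x + (1-x)$ together with the observation that $(1-x)x = x - x^2 = 0$ forces $1-x \in Ann(I)$, whence $I + Ann(I) = A$. Your version is in fact slightly cleaner, since it handles $x=1$ uniformly rather than splitting it off as a separate case as the paper does (and note the minor slip that $\langle x\rangle = Ax$ holds for \emph{any} $x$ in a commutative unitary ring, not because $x$ is idempotent).
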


\begin{proof}
If $x\not=1$, since $x+1-x=1$, we have that $1-x\not\in I$. Also, since $%
\left( 1-x\right) x=0,$ we have that $\left( 1-x\right) y=0$, for all $y\in
I $, then $1-x\in Ann\left( I\right) $ and $Ann\left( I\right) $ and $I$ are
coprime. If $x=1$, then $I=A$ and $Ann\left( I\right) =\{0\}$.
\end{proof}

\begin{theorem}
\label{Theorem 3.22}\textit{Let }$A$\textit{\ be a finite commutative ring
and }$I\in Id(A).$\textit{\ Then the following assertions are equivalent:}

\textit{(i) }$I$\textit{\ and }$Ann(I)$\textit{\ are coprime;}

\textit{(ii) }$I\otimes I=I.$
\end{theorem}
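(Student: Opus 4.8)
The plan is to prove Theorem 3.22 by establishing the two implications separately, leveraging the finiteness hypothesis to upgrade the general equivalence from Theorem 3.21. The forward direction $(i)\Rightarrow(ii)$ is immediate and requires no finiteness: if $I$ and $Ann(I)$ are coprime, then $I+Ann(I)=A$, and by Remark \ref{Remark 2.3}(1) coprime ideals satisfy $I\otimes Ann(I)=I\cap Ann(I)$. I would then invoke the argument already recorded in Theorem \ref{Theorem 3.21}, where coprimality of $I$ and $Ann(I)$ directly yields $I\otimes I=I\cap I=I$; so this half is essentially a citation of the work done there.

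The substantive content is the converse $(ii)\Rightarrow(i)$, and here finiteness does the heavy lifting. First I would observe that since $A$ is finite, Corollary \ref{Corollary 3.13} applies: $A$ has Chang property, so $Id(A)$ is an MV-algebra, and by Proposition \ref{Proposition 3.17}(ii) we automatically have $Ann(Ann(I))=I$ for every $I\in Id(A)$. This is precisely the property that had to be assumed separately in Theorem \ref{Theorem 3.21}(ii) and is now free. The remaining conditions of Theorem \ref{Theorem 3.21}(ii), namely $(Ann(I):I)+(I:Ann(I))=A$ and $Ann(I)\otimes(I:Ann(I))=\mathbf{0}$, also need to be extracted from the single hypothesis $I\otimes I=I$.

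The key computation is to show that $I\otimes I=I$ forces $(Ann(I):I)=Ann(I)$ and $(I:Ann(I))=I$, exactly as in the $(ii)\Rightarrow(i)$ half of Theorem \ref{Theorem 3.21}. From $I\otimes I=I$ one gets $(Ann(I):I)=Ann(I)$ by a short annihilator-quotient manipulation using Proposition \ref{Proposition 3.2}(v), which gives $Ann(I\otimes I)=(Ann(I):I)$, and $Ann(I\otimes I)=Ann(I)$. Then, using $Ann(Ann(I))=I$ from finiteness together with the fact that $(I:Ann(I))\otimes Ann(I)\subseteq I\cap Ann(I)=\mathbf{0}$, I would deduce $(I:Ann(I))\subseteq Ann(Ann(I))=I$, and since the reverse inclusion $I\subseteq(I:Ann(I))$ is Remark \ref{Remark 2.1}(7), conclude $(I:Ann(I))=I$. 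Combining these, $Ann(I)+I=(Ann(I):I)+(I:Ann(I))=A$, so $I$ and $Ann(I)$ are coprime.

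The main obstacle, such as it is, lies in verifying that $I\otimes I=I$ alone yields $I\cap Ann(I)=\mathbf{0}$, which is needed for the inclusion $(I:Ann(I))\otimes Ann(I)\subseteq I\cap Ann(I)=\mathbf{0}$ to be useful. In the finite MV-algebra $Id(A)$, idempotency $I\otimes I=I$ means $I$ is a \emph{Boolean element}, and Boolean elements satisfy $I\wedge I^{\ast}=0$, i.e.\ $I\cap Ann(I)=\mathbf{0}$; I would justify this either by the characterization in Proposition \ref{Proposition 3.33} (where $x^{2}=x$ together with $x^{\ast\ast}=x$ gives $x\vee x^{\ast}=1$, hence dually $x\wedge x^{\ast}=0$) applied to the residuated lattice $Id(A)$, or directly from the MV-algebra identities. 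Once $I\cap Ann(I)=\mathbf{0}$ is in hand, the rest is the routine quotient-ideal bookkeeping above, and the theorem follows by reducing to Theorem \ref{Theorem 3.21}.
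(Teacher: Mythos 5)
Your overall strategy coincides with the paper's: use finiteness via Corollary \ref{Corollary 3.13} to make $Id(A)$ an MV-algebra, observe that an idempotent ideal is a Boolean element there, and reduce to Theorem \ref{Theorem 3.21}. Most of your steps are correct: $(Ann(I):I)=Ann(I)$ via Proposition \ref{Proposition 3.2}(v), the inclusion $(I:Ann(I))\otimes Ann(I)\subseteq I\cap Ann(I)$, the deduction $(I:Ann(I))\subseteq Ann(Ann(I))=I$, and the Boolean-element fact $I\cap Ann(I)=\mathbf{0}$. However, your closing line is circular as written. You end with $Ann(I)+I=(Ann(I):I)+(I:Ann(I))=A$, but the equality $(Ann(I):I)+(I:Ann(I))=A$ is never established: you flagged it at the outset as one of the conditions ``to be extracted from $I\otimes I=I$,'' yet your extraction produces only the two identities $(Ann(I):I)=Ann(I)$ and $(I:Ann(I))=I$, and under those identities the statement $(Ann(I):I)+(I:Ann(I))=A$ is literally equivalent to the coprimality $I+Ann(I)=A$ you are trying to prove. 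So the final ``$=A$'' assumes the conclusion.

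The gap is easy to fill, and in two ways, both available in the paper. First, prelinearity $(J:I)+(I:J)=A$ holds for \emph{all} pairs of ideals once $Id(A)$ is an MV-algebra (Proposition \ref{Proposition 3.5}(iii), Corollary \ref{Corollary 3.6}(iv)); it requires no idempotency, and this is exactly how the paper's proof supplies the third condition of Theorem \ref{Theorem 3.21}(ii), by quoting $(x\rightarrow y)\vee (y\rightarrow x)=1$ as a general MV law. Second, and shorter: the fact you quote in your own parenthetical, $x\vee x^{\ast}=1$ for a Boolean element, is already the desired conclusion, since in $Id(A)$ the join is $+$ and $^{\ast}=Ann$, so an idempotent (hence Boolean) ideal satisfies $I+Ann(I)=A$ outright, making the quotient-ideal bookkeeping unnecessary. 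One further caution: Proposition \ref{Proposition 3.33} cannot be invoked for a single idempotent element, because all three of its conditions are quantified over every $x\in L$; your fallback ``directly from the MV-algebra identities'' is the correct justification, and it is what the paper's proof actually does when it computes $x^{\ast}\odot (x^{\ast}\rightarrow x)=0$ for a Boolean element $x$.
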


\begin{proof}
$\ (i)\Rightarrow (ii).$ Obviously, by Theorem \ref{Theorem 3.21}.

$\ (ii)\Rightarrow (i).$ Follows by Corollary \ref{Corollary 3.13} and
Theorem \ref{Theorem 3.21}, since in an MV-algebra $(L,\oplus ,^{\ast },0),$
$x^{\ast \ast }=x$ and $(x\rightarrow y)\vee (y\rightarrow x)=1,$ for every $%
x,y\in L.$ Also, if $x\in L$ such that $x\odot x=x,$ we deduce that
$x$ is a Boolean element in MV-algebra $L,$ so, $x^{\ast }$ is also Boolean.
Thus, $x^{\ast }\odot x^{\ast }=x^{\ast }.$

Then $x^{\ast }\odot (x^{\ast }\rightarrow x)=$ $x^{\ast }\odot (x^{\ast
}\rightarrow x^{\ast \ast })=x^{\ast }\odot (x^{\ast }\odot x^{\ast })^{\ast
}=x^{\ast }\odot x^{\ast \ast }=x^{\ast }\odot x=0.$ We deduce that $x^{\ast
}\odot (x^{\ast }\rightarrow x)=0.$

We conclude that in MV-algebra $(Id(A),\oplus ,Ann,0=\{0\})$, for every $%
I\in Id(A),$ if $I\otimes I=I$ \ then $Ann(Ann(I))=I,$ $%
(Ann(I):I)+(I:Ann(I))=A$ and \ $Ann(I)\otimes (I:Ann(I))=\mathbf{0}.$
\end{proof}

\begin{remark}
If $I$ is a non-idempotent ideal in \textit{a finite commutative ring }$A,$%
\textit{\ then }$I$\textit{\ and }$Ann(I)$\ are not coprime. Indeed, we
consider the ring $(\mathbb{Z}_{4},+,\cdot )$ \textit{\  and }$I=\{\widehat{0%
},\widehat{2}\}.$ Then $I\otimes I=\{\widehat{0}\}$\textit{\ and using
Example \ref{Example 3.28} (1), } $I$ and $Ann\left( I\right) $ are not
coprime ideals.
\end{remark}

The following results can be obtained by the straightforward calculations.

\begin{corollary}
\label{Corollary 3.23}\textit{\ Let }$A$\textit{\ be a finite commutative
ring. The following conditions are equivalent:}

\textit{(i) }$(Id(A),\cap ,+,Ann,0=\{0\},1=A)$\textit{\ is a Boolean algebra;%
}

\textit{(ii) }$I\otimes I=I,$\textit{\ for every }$I\in Id(A);$

\textit{(iii) }$I$\textit{\ and }$Ann(I)$\textit{\ are coprime, for every }$%
I\in Id(A);$

\textit{(iv) }$I\cap Ann(I)=0,$\textit{\ for every }$I\in Id(A);$

\textit{(v) }$I\oplus I=I,$\textit{\ for every }$I\in Id(A).$
\end{corollary}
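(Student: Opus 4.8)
The plan is to route every equivalence through the single structural fact supplied by Corollary \ref{Corollary 3.13}: since $A$ is finite, $Id(A)$ is an MV-algebra, and its underlying residuated lattice has $I\odot J=I\otimes J$, $I^{\ast}=Ann(I)$, $I\vee J=I+J$ and $I\wedge J=I\cap J$. Once this is fixed, each of conditions (i)--(v) is a statement about idempotents and complementation in an \emph{involutive} residuated lattice, and the equivalences follow from the MV-algebra identities recorded in Proposition \ref{Proposition 3.33}, from Theorem \ref{Theorem 3.22}, and from the remarks preceding the corollary. I would therefore first write down this translation dictionary explicitly, because all subsequent steps are just rereadings of one-element MV-algebra computations under it.

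For $(i)\Leftrightarrow(ii)$ I would invoke the stated fact that a residuated lattice is a Boolean algebra iff $x\vee x^{\ast}=1$ for every $x$. Because $Id(A)$ is an MV-algebra, the double negation $I^{\ast\ast}=I$ holds automatically, so Proposition \ref{Proposition 3.33} collapses the condition $I\vee I^{\ast}=1$ to $I\odot I=I$, that is $I\otimes I=I$. Hence $Id(A)$ is Boolean exactly when every ideal is idempotent. Step $(ii)\Leftrightarrow(iii)$ is then nothing more than Theorem \ref{Theorem 3.22} applied to each $I\in Id(A)$. For $(iii)\Leftrightarrow(iv)$ I would argue pointwise in the MV-algebra: De Morgan together with involution gives $(I\wedge I^{\ast})^{\ast}=I^{\ast}\vee I^{\ast\ast}=I\vee I^{\ast}$, so $I\vee I^{\ast}=1$ iff $I\wedge I^{\ast}=0$; translating back, $I+Ann(I)=A$ iff $I\cap Ann(I)=0$ (one implication being already Remark \ref{Remark 2.3}(2)). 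Finally, for $(ii)\Leftrightarrow(v)$ I would use $I\oplus I=(I^{\ast}\odot I^{\ast})^{\ast}$: if $I\otimes I=I$ then $I$ is a Boolean element, hence so is $I^{\ast}=Ann(I)$, whence $I\oplus I=(Ann(I)\otimes Ann(I))^{\ast}=Ann(I)^{\ast}=I$, and the converse reverses this chain through $Ann(I)\otimes Ann(I)=Ann(I)$.

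The genuinely substantive input, namely the passage between idempotence of $I$ and coprimality of $I$ and $Ann(I)$, has already been isolated as Theorem \ref{Theorem 3.22}, so the main care-point is not any individual computation but the bookkeeping: making sure the operations are matched correctly under the dictionary ($\odot\leftrightarrow\otimes$, ${}^{\ast}\leftrightarrow Ann$, $\vee\leftrightarrow+$, $\wedge\leftrightarrow\cap$) and that the quantifier ``for every $I$'' is preserved in each direction. This is exactly the ``straightforward calculations'' the authors flag, and I would expect no obstacle beyond keeping the involutive structure in view throughout.
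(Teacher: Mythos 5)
Your proposal is correct and follows essentially the route the paper intends: the paper gives no explicit proof (it flags the corollary as "straightforward calculations"), and its placement makes clear the intended assembly is exactly yours — Corollary \ref{Corollary 3.13} to get the MV-structure on $Id(A)$ with the dictionary $\odot=\otimes$, $^{\ast}=Ann$, $\vee=+$, $\wedge=\cap$, Theorem \ref{Theorem 3.22} for (ii)$\Leftrightarrow$(iii), Proposition \ref{Proposition 3.33} together with the $x\vee x^{\ast}=1$ characterization for (i)$\Leftrightarrow$(ii), and standard involutive/De Morgan and Boolean-element computations for (iv) and (v). Your fill-in of these details is sound, including the quantifier bookkeeping when applying hypothesis (v) to $Ann(I)$ in the converse of (ii)$\Leftrightarrow$(v).
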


Moreover, using the characterization of Boolean elements in MV-algebras we
obtain that a residuated lattice is a Boolean algebra if and only if it is
both Heyting algebra and MV-algebra.

Thus, we conclude that:

\begin{corollary}
\label{Corollary 3.25}\textit{\ Let }$A$\textit{\ be a commutative ring.
Then:}

\textit{(i) }$(Id(A),\cap ,+,Ann,0=\{0\},1=A)$\textit{\ is a Boolean algebra
if and only if }$A$ is a Von Neumann regular ring satisfying Chang property;

\textit{(ii) If }$A$\textit{\ is finite, then }$(Id(A),\cap
,+,Ann,0=\{0\},1=A)$\textit{\ is a Boolean algebra if and only if }$A$ is a
Von Neumann regular ring.
\end{corollary}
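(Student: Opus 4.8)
The plan is to reduce both parts to the structural characterization stated just above the corollary: a residuated lattice is a Boolean algebra if and only if it is simultaneously a Heyting algebra and an MV-algebra. I would work throughout with the residuated lattice $(Id(A),\cap ,+,\otimes ,\rightarrow ,0=\{0\},1=A)$ whose structure is recalled at the beginning of Section 3, and then translate each of the two defining properties into a ring-theoretic statement about $A$.

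For part (i), I would first recall that $Id(A)$ is a Heyting algebra precisely when $I\otimes I=I$ for every $I\in Id(A)$, and that by Theorem \ref{Theorem 3.18} this idempotency of every ideal is exactly the condition that $A$ be a Von Neumann regular ring. Second, by Theorem \ref{Theorem 3.4}, $Id(A)$ is an MV-algebra if and only if $A$ has Chang property. Combining these two translations with the characterization ``Boolean $=$ Heyting $+$ MV'' then gives directly that $Id(A)$ is a Boolean algebra if and only if $A$ is Von Neumann regular and has Chang property, which is the claim of (i).

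For part (ii), the additional ingredient is Corollary \ref{Corollary 3.13}, which asserts that every finite commutative unitary ring automatically has Chang property, so that $Id(A)$ is always an MV-algebra in the finite case. Feeding this into part (i), the MV-algebra (equivalently Chang) hypothesis becomes vacuous, and the only remaining requirement for $Id(A)$ to be a Boolean algebra is the Heyting condition, i.e.\ that $A$ be Von Neumann regular. This yields the stated equivalence for finite rings.

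The step that needs the most care is the justification of the underlying lemma ``Boolean if and only if Heyting and MV''. The forward direction is immediate, but the converse relies on Proposition \ref{Proposition 3.33}: one must check that in a residuated lattice satisfying both $x^{2}=x$ and $x^{\ast\ast}=x$ (the latter being forced by the MV-axioms) one recovers $x\vee x^{\ast}=1$, so that the lattice is complemented and hence Boolean. Equivalently, in the MV-algebra $Id(A)$ the idempotent ideals are precisely the Boolean (complemented) elements, and demanding that \emph{every} ideal be idempotent forces every element to be Boolean, collapsing the MV-algebra onto a Boolean algebra. Once this lemma is in hand, both parts of the corollary follow by the direct substitutions described above.
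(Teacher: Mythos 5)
Your proposal is correct and takes essentially the same route as the paper: the paper derives Corollary \ref{Corollary 3.25} from the observation that a residuated lattice is Boolean iff it is both a Heyting algebra and an MV-algebra (justified via Proposition \ref{Proposition 3.33}), combined with Theorem \ref{Theorem 3.18} (Heyting $\Leftrightarrow$ Von Neumann regular), Theorem \ref{Theorem 3.4} (MV $\Leftrightarrow$ Chang property), and Corollary \ref{Corollary 3.13} for the finite case. Your filling-in of the converse direction of the key lemma via Proposition \ref{Proposition 3.33} ($x^{2}=x$ and $x^{\ast\ast}=x$ imply $x\vee x^{\ast}=1$) is exactly the argument the paper leaves implicit.
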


Using Remark \ref{Remark 3.19} and Corollary \ref{Corollary 3.23} we deduce
that:

\begin{proposition}
\label{Proposition 3.24} If $A$ is a finite Boolean ring, then $(Id(A),\cap
,+,Ann,0=\{0\},1=A)$ is a Boolean algebra.
\end{proposition}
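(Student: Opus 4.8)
The plan is to deduce the statement directly from Corollary \ref{Corollary 3.23}. Since $A$ is a finite commutative ring (recall that Boolean rings are automatically commutative, as $x=x^{2}$ forces $2x=0$ and $xy=yx$), that corollary asserts the equivalence of ``$(Id(A),\cap ,+,Ann,0=\{0\},1=A)$ is a Boolean algebra'' with ``$I\otimes I=I$ for every $I\in Id(A)$''. Hence it suffices to verify that in a finite Boolean ring every ideal is idempotent under the ideal product $\otimes$, and then invoke the implication (ii)$\Rightarrow$(i) of Corollary \ref{Corollary 3.23}.

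First I would record the idempotency of ideals, which is the only substantive point. The inclusion $I\otimes I\subseteq I+I=I$ holds by Remark \ref{Remark 2.1}(1). For the reverse inclusion, any $x\in I$ satisfies $x=x^{2}=x\cdot x$, and since $x\in I$ this exhibits $x$ as an element of $I\otimes I$; thus $I\subseteq I\otimes I$, and therefore $I\otimes I=I$ for every $I\in Id(A)$.

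Equivalently, I could route this through Von Neumann regularity: as $x^{2}=x$ for every $x\in A$, taking $a=1$ gives $x=a\cdot x^{2}$, so $A$ is a Von Neumann regular ring, which is exactly the content of Remark \ref{Remark 3.19}. By Theorem \ref{Theorem 3.18} this makes $Id(A)$ a Heyting algebra, i.e. $I\otimes I=I$ for every $I\in Id(A)$, recovering the same conclusion. Either route is elementary.

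With every ideal idempotent in hand, condition (ii) of Corollary \ref{Corollary 3.23} is satisfied, and the equivalence there yields that $Id(A)$ is a Boolean algebra, completing the argument. I do not expect any genuine obstacle: the idempotency of arbitrary ideals follows immediately from $x=x^{2}$, and the finiteness hypothesis is used only to license the appeal to Corollary \ref{Corollary 3.23}.
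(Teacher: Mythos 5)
Your proposal is correct and follows essentially the same route as the paper, which deduces the proposition precisely from Remark \ref{Remark 3.19} (every ideal of a Boolean ring is idempotent) together with the equivalence (i)$\Leftrightarrow$(ii) of Corollary \ref{Corollary 3.23} for finite commutative rings. Your direct verification that $x=x^{2}$ gives $I\subseteq I\otimes I$ is just an explicit spelling-out of the idempotency the paper cites, so the two arguments coincide.
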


\begin{proposition}
\label{Proposition 3.26} \textit{Let} \thinspace $p_{1},p_{2},...p_{r},$
\textit{be} $r$ \textit{prime but not necessarily distinct numbers}. \textit{%
We consider the ring} $A=\mathbb{Z}_{p_{1}}\times \mathbb{Z}_{p_{2}}\times
...\times \mathbb{Z}_{p_{r}}.$ \textit{Therefore, for each} $I\in Id\left(
A\right) ,$ \textit{we have that} $I$ \textit{and} $Ann\left( I\right) $
\textit{are coprime. The number of ideals in this ring is }$2^{r}$.
\end{proposition}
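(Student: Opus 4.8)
The plan is to exploit the fact that every $\mathbb{Z}_{p_i}$ is a field and that $A$ is therefore a finite direct product of fields. The first step is to describe $Id(A)$ completely. Writing $e_i=(0,\dots ,0,1,0,\dots ,0)\in A$ for the $i$-th canonical idempotent, every ideal $I\in Id(A)$ decomposes as $I=\prod_{i=1}^{r}e_iI$, and each component $e_iI$ is an ideal of $e_iA\cong \mathbb{Z}_{p_i}$. Since $p_i$ is prime, $\mathbb{Z}_{p_i}$ is a field and thus has exactly the two ideals $\{0\}$ and $\mathbb{Z}_{p_i}$. Consequently an ideal of $A$ is exactly a product $I=I_1\times I_2\times \dots \times I_r$ with $I_i\in\{\{0\},\mathbb{Z}_{p_i}\}$, and since each of the $r$ coordinates admits two independent choices, $Id(A)$ has $2^{r}$ elements. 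This settles the count of ideals, which is the second assertion.

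For coprimality I would compute the annihilator coordinatewise. Using Remark \ref{Remark 2.1}(2), for $I=I_1\times\dots\times I_r$ one gets $Ann(I)=Ann(I_1)\times\dots\times Ann(I_r)$, where each annihilator is taken inside the field $\mathbb{Z}_{p_i}$. In a field, $Ann(\{0\})=\mathbb{Z}_{p_i}$ and $Ann(\mathbb{Z}_{p_i})=\{0\}$, so in each coordinate $I_i$ and $Ann(I_i)$ are complementary: $I_i+Ann(I_i)=\mathbb{Z}_{p_i}$ in both cases. Taking the product over $i$ yields $I+Ann(I)=\prod_{i=1}^{r}\bigl(I_i+Ann(I_i)\bigr)=\prod_{i=1}^{r}\mathbb{Z}_{p_i}=A$, so $I$ and $Ann(I)$ are coprime.

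Alternatively, the same conclusion follows from the general theory already developed. Since each coordinate ideal satisfies $I_i\otimes I_i=I_i$, every ideal of $A$ is idempotent, so $A$ is a finite Von Neumann regular ring; Corollary \ref{Corollary 3.23} (or, ideal by ideal, Theorem \ref{Theorem 3.22}) then gives at once that $I$ and $Ann(I)$ are coprime for every $I\in Id(A)$, and in fact that $Id(A)$ is a Boolean algebra.

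The only genuine point requiring care is the structural claim that the ideals of the direct product are precisely the products of ideals of the factors; once this standard decomposition via the orthogonal idempotents $e_i$ is in hand, both the count $2^{r}$ and the coprimality reduce to the trivial two-element ideal lattice of a field, and I expect no further obstacle.
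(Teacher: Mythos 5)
Your proof is correct, but it follows a genuinely different route from the paper's. The paper splits into two cases: when the $p_i$ are distinct it invokes the isomorphism $A\simeq \mathbb{Z}_{p_1 p_2\cdots p_r}$ and writes $1=av+bw$ for the coprime integers $v,w$ attached to an ideal and its annihilator; when primes repeat, it classifies the possible shapes of ideals (up to reordering) and exhibits, for each shape, explicit elements $\alpha\in I$ and $\beta\in Ann(I)$ with $\alpha+\beta=1$, finally counting ideals via $\sum_{k=0}^{r}\binom{r}{k}=2^r$. You instead use the canonical orthogonal idempotents $e_i$ to show once and for all that $Id(A)=\{I_1\times\cdots\times I_r : I_i\in\{\{0\},\mathbb{Z}_{p_i}\}\}$, after which both the count $2^r$ and the coprimality $I+Ann(I)=\prod_i(I_i+Ann(I_i))=A$ are immediate, with no case distinction. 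Your approach is more structural and arguably tighter: the paper's Case 2 description of the ideals is somewhat informal (it lists ideals only ``up to isomorphism''/reordering), whereas the idempotent decomposition is a complete and standard classification, and it generalizes verbatim to any finite direct product of fields. What the paper's argument buys in exchange is explicitness: it displays the actual elements witnessing coprimality inside $\mathbb{Z}_{p_1}\times\cdots\times\mathbb{Z}_{p_r}$, which the authors then reuse in their examples. Your alternative closing argument (every ideal is coordinatewise idempotent, hence $A$ is Von Neumann regular, hence Corollary \ref{Corollary 3.23} or Theorem \ref{Theorem 3.22} applies) is also legitimate and non-circular, since those results precede Proposition \ref{Proposition 3.26} in the paper and do not depend on it.
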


\begin{proof}
First of all, we remark that if $K$ is a field, then the only ideals are $%
\{0\}$ and $K.$ Then, $Ann(\{0\})=K$, $Ann\{K\}=\{0\}$ and it is clear that $%
Ann(\{0\})$ and $K$ are coprime.

\textbf{Case 1. }We assume that $p_{1},p_{2},...,p_{r}$ are distinct prime
numbers. For an arbitrary integer $q,q\in \{1,2,...,r\},$ let $%
v=p_{i_{1}}p_{i_{2}}...p_{i_{q}}$ and $w=p_{i_{q+1}}p_{i_{q+2}}...p_{i_{r}}$%
, where $%
\{p_{i_{1}},p_{i_{2}},...p_{i_{q}},p_{i_{q+1}},p_{i_{q+2}},...p_{i_{r}}\}$
and $\{p_{1},p_{2},...,p_{r}\}$ are the same set. Let $%
s=p_{1}p_{2}...p_{r}=p_{i_{1}}p_{i_{2}},...p_{i_{q}}p_{i_{q+1}}p_{i_{q+2}}...p_{i_{r}}.
$

We have that $A=\mathbb{Z}_{p_{1}}\times \mathbb{Z}_{p_{2}}\times ...\times
\mathbb{Z}_{p_{r}}\simeq \mathbb{Z}_{s}=\mathbb{Z}_{vw}$. Let $I$ be an
ideal of the ring $A\,$. Then $I$ has the form $I\simeq \mathbb{Z}_{v}\simeq
\mathbb{Z}_{p_{i_{1}}}\times \mathbb{Z}_{p_{i_{2}}}\times ...\times \mathbb{Z%
}_{p_{i_{q}}}=\{\widehat{0},\widehat{w},\widehat{2w},\widehat{3w},...,%
\widehat{\left( v-1\right) w}\},$ where $\widehat{x}$ is an element from $A$%
. It results that $Ann\left( I\right) =\{\widehat{0},\widehat{v},\widehat{2v}%
,\widehat{3v},...,\widehat{\left( w-1\right) v}\}$. Since $v$ and $w$ are
coprime integers, there are $a,b\in \mathbb{Z}$ such that $1=av+bw$.
Therefore, we found the elements $\widehat{av}\in Ann\left( I\right) $ and $%
\widehat{bw}\in I$ such that $\widehat{1}=$ $\widehat{av}+\widehat{bw}$.
From here, we get that $I$ and $Ann\left( I\right) $ are coprime ideals.

\textbf{Case 2.} The prime numbers $p_{1},p_{2},...,p_{r}$ are not distinct.
Without losing generality, we suppose that there is a number $s\leq r$ such
that $p_{1}=p_{2}=...=p_{s}=p$ and $p_{s+1},p_{s+2},...,p_{r}$ are distinct
prime numbers not equal to $p$. Let $I$ be an ideal of the ring $A\,=\mathbb{%
Z}_{p_{1}}\times \mathbb{Z}_{p_{2}}\times ...\times \mathbb{Z}_{p_{r}}$.
Then $I$ can have the following forms:

i) $I\simeq \underset{s-time}{\underbrace{\mathbb{Z}_{p}\times \mathbb{Z}%
_{p}\times ...\times \mathbb{Z}_{p}}}$;

ii) $I\simeq \underset{q-time}{\underbrace{\mathbb{Z}_{p}\times \mathbb{Z}%
_{p}\times ...\times \mathbb{Z}_{p}}},$ $q<s$;

iii) $I\simeq \underset{q-time}{\underbrace{\mathbb{Z}_{p}\times \mathbb{Z}%
_{p}\times ...\times \mathbb{Z}_{p}}}\times \mathbb{Z}_{t},q\leq s$ and $%
t=p_{i_{1}}p_{i_{2}}...p_{i_{l}}$, where $%
\{p_{i_{1}},p_{i_{2}},...,p_{i_{l}}\}\subseteq \{p_{s+1},p_{s+2},...,p_{r}\}$
are distinct, $1\leq l\leq r-s$.

If $I\simeq \underset{s-time}{\underbrace{\mathbb{Z}_{p}\times \mathbb{Z}%
_{p}\times ...\times \mathbb{Z}_{p}}}$, then the annihilator is $Ann\left(
I\right) \simeq \mathbb{Z}_{p_{s+1}}\times \mathbb{Z}_{p_{s+2}}\times
...\times \mathbb{Z}_{p_{r}}=\mathbb{Z}_{t}$, for $l=r-s$. The element $%
\alpha =\left( \underset{s-time}{\underbrace{\widehat{1},\widehat{1},...,%
\widehat{1}}},\underset{(r-s)-time}{\underbrace{\mathbf{0},\mathbf{0},...,%
\mathbf{0}}}\right) \in I$, where $\widehat{1}$ is the class modulo $p$ and $%
\mathbf{0}$ are classes in $\mathbb{Z}_{p_{s+1}},\mathbb{Z}_{p_{s+2}},...,%
\mathbb{Z}_{p_{r}}$. In the same time, the element $\beta =\left( \underset{%
s-time}{\underbrace{\widehat{0},\widehat{0},...,\widehat{0}}},\underset{%
(r-s)-time}{\underbrace{\mathbf{1},\mathbf{1},...,\mathbf{1}}}\right) \in
Ann\left( I\right) $, where $\widehat{0}$ is the class modulo $p$ and $%
\mathbf{1}$ are classes in $\mathbb{Z}_{p_{s+1}},\mathbb{Z}_{p_{s+2}},...,%
\mathbb{Z}_{p_{r}}$. With the above notations, we remark that $\alpha +\beta
=1,$ where $1=\left( \underset{s-time}{\underbrace{\widehat{1},\widehat{1}%
,...,\widehat{1}}},\underset{(r-s)-time}{\underbrace{\mathbf{1},\mathbf{1}%
,...,\mathbf{1}}}\right) $ is the unit element in $A$. Therefore $I$ and $%
Ann\left( I\right) $ are coprime.

If $I\simeq \underset{q-time}{\underbrace{\mathbb{Z}_{p}\times \mathbb{Z}%
_{p}\times ...\times \mathbb{Z}_{p}}},$ $q<s$, then $Ann\left( I\right)
\simeq \underset{(s-q)-time}{\underbrace{\mathbb{Z}_{p}\times \mathbb{Z}%
_{p}\times ...\times \mathbb{Z}_{p}}}\times \mathbb{Z}_{p_{s+1}}\times
\mathbb{Z}_{p_{s+2}}...\times \mathbb{Z}_{p_{r}}$. The element $\alpha
=\left( \underset{q-time}{\underbrace{\widehat{1},\widehat{1},...,\widehat{1}%
}},\underset{(s-q)-time}{\underbrace{\widehat{0},\widehat{0},...,\widehat{0}}%
,}\underset{(r-s)-time}{\underbrace{\mathbf{0},\mathbf{0},...,\mathbf{0}}}%
\right) \in I$, where $\widehat{0},\widehat{1}$ are the classes modulo $p$
and $\mathbf{0}$ are classes in $\mathbb{Z}_{p_{s+1}},\mathbb{Z}%
_{p_{s+2}},...,\mathbb{Z}_{p_{r}}$. In the same time, the element $\beta
=\left( \underset{q-time}{\underbrace{\widehat{0},\widehat{0},...,\widehat{0}%
}},\underset{(s-q)-time}{\underbrace{\widehat{1},\widehat{1},...,\widehat{1}}%
,}\underset{(r-s)-time}{\underbrace{\mathbf{0},\mathbf{0},...,\mathbf{0}}}%
\right) \in Ann\left( I\right) $, where $\widehat{0},\widehat{1}$ are the
classes modulo $p$ and $\mathbf{1}$ are classes in $\mathbb{Z}_{p_{s+1}},%
\mathbb{Z}_{p_{s+2}},...,\mathbb{Z}_{p_{r}}$. With the above notations, we
remark that $\alpha +\beta =1,$ where $1=\left( \underset{s-time}{%
\underbrace{\widehat{1},\widehat{1},...,\widehat{1}}},\underset{(r-s)-time}{%
\underbrace{\mathbf{1},\mathbf{1},...,\mathbf{1}}}\right) $ is the unit
element in $A$. Therefore, $I$ and $Ann\left( I\right) $ are coprime.

If $I\simeq \underset{q-time}{\underbrace{\mathbb{Z}_{p}\times \mathbb{Z}%
_{p}\times ...\times \mathbb{Z}_{p}}}\times \mathbb{Z}_{t},q\leq s$ and $%
t=p_{i_{1}}p_{i_{2}}...p_{i_{l}}$, where $%
\{p_{i_{1}},p_{i_{2}},...,p_{i_{l}}\}\subseteq \{p_{s+1},p_{s+2},...,p_{r}\}$
are distinct, $1\leq l\leq r-s$, then the annihilator is $Ann\left( I\right)
\simeq \underset{(s-q)-time}{\underbrace{\mathbb{Z}_{p}\times \mathbb{Z}%
_{p}\times ...\times \mathbb{Z}_{p}}}\times \mathbb{Z}_{u}$, where $u=\prod
p_{j}$, with $p_{j}$ distinct prime numbers $p_{j}\in
\{p_{s+1},p_{s+2},...,p_{r}\}-\{p_{i_{1}},p_{i_{2}},...,p_{i_{l}}\}$. The
element $\alpha =\left( \underset{q-time}{\underbrace{\widehat{1},\widehat{1}%
,...,\widehat{1}}},\underset{l-time}{\underbrace{\mathbf{1},\mathbf{1},...,%
\mathbf{1}},}\underset{(r-q-l)-time}{\underbrace{\mathbf{0},\mathbf{0},...,%
\mathbf{0}}}\right) \in I$, where $\widehat{1}$ is the class modulo $p$ and $%
\mathbf{0,1}$ are classes in $\mathbb{Z}_{p_{q+1}},\mathbb{Z}_{p_{q+2}},...,%
\mathbb{Z}_{p_{r}}$. In the same time, the element $\beta =\left( \underset{%
q-time}{\underbrace{\widehat{0},\widehat{0},...,\widehat{0}}},\underset{%
l-time}{\underbrace{\mathbf{0},\mathbf{0},...,\mathbf{0}},}\underset{%
(r-q-l)-time}{\underbrace{\mathbf{1},\mathbf{1},...,\mathbf{1}}}\right) \in
Ann\left( I\right) $, where $\widehat{0}$ is the class modulo $p$ and $%
\mathbf{0},\mathbf{1}$ are classes in $\mathbb{Z}_{p_{q+1}},\mathbb{Z}%
_{p_{q+2}},...,\mathbb{Z}_{p_{r}}$. With the above notations, we remark that
$\alpha +\beta =1,$ where $1=\left( \underset{q-time}{\underbrace{\widehat{1}%
,\widehat{1},...,\widehat{1}}},\underset{(r-q)-time}{\underbrace{\mathbf{1},%
\mathbf{1},...,\mathbf{1}}}\right) $ is the unit element in $A$.

Therefore $I$ and $Ann\left( I\right) $ are coprime. For the last part of
the proposition, it is clear that for the ring $A$ we have $\complement
_{r}^{0}+\complement _{r}^{1}+\complement _{r}^{2}+...+\complement _{r}^{r}=$
$2^{r}$ ideals.
\end{proof}

\begin{remark}
\label{Remark 3.27} Due to the characterization of finite abelian groups, it
results that the rings $A$ from the above proposition \textit{\ }are the
only finite commutative unitary ring with the property that for each $I\in
Id\left( A\right) $ the ideals $I$ and $Ann\left( I\right) $ are coprime.
\end{remark}

\begin{example}
\label{Example 3.28} 1) Let $A=\mathbb{Z}_{4}=\mathbb{Z}_{2^{2}}.$ The
ideals are $\{\widehat{0}\},\{\widehat{0},\widehat{2}\}$ and $\mathbb{Z}_{4}$%
. For $I=\{\widehat{0},\widehat{2}\}$, the annihilator is $Ann\left(
I\right) =$ $\{\widehat{0},\widehat{2}\}$, therefore $I$ and $Ann\left(
I\right) $ are not coprime ideals.

2) Let $A=\mathbb{Z}_{2}\times \mathbb{Z}_{2}=\{\left( \widehat{0},\widehat{0%
}\right) ,\left( \widehat{0},\widehat{1}\right) ,\left( \widehat{1},\widehat{%
0}\right) ,\left( \widehat{1},\widehat{1}\right) \}$. For $I=\{\left(
\widehat{0},\widehat{0}\right) ,\left( \widehat{0},\widehat{1}\right) \}$,
the annihilator is $Ann\left( I\right) =\{\left( \widehat{0},\widehat{0}%
\right) ,\left( \widehat{1},\widehat{0}\right) \}$. Obviously, $I$ and $%
Ann\left( I\right) $ are coprime ideals.

3) Let $A=\mathbb{Z}_{2}\times \mathbb{Z}_{2}\times \mathbb{Z}_{2}=$\newline
$=\{\left( \widehat{0},\widehat{0},\widehat{0}\right) ,\left( \widehat{0},%
\widehat{1},\widehat{0}\right) ,\left( \widehat{1},\widehat{0},\widehat{0}%
\right) ,\left( \widehat{1},\widehat{1},\widehat{0}\right) ,\left( \widehat{0%
},\widehat{0},\widehat{1}\right) ,\left( \widehat{1},\widehat{0},\widehat{1}%
\right) ,$\newline
$\left( \widehat{0},\widehat{1},\widehat{1}\right) ,\left( \widehat{1},%
\widehat{1},\widehat{1}\right) \}$. Let $I=\{\left( \widehat{0},\widehat{0},%
\widehat{0}\right) ,\left( \widehat{0},\widehat{1},\widehat{0}\right) \}$ be
an ideal. The annihilator is $Ann\left( I\right) =\{\left( \widehat{0},%
\widehat{0},\widehat{0}\right) ,\left( \widehat{1},\widehat{0},\widehat{0}%
\right) ,\left( \widehat{0},\widehat{0},\widehat{1}\right) ,\left( \widehat{1%
},\widehat{0},\widehat{1}\right) \}$. It results that $I$ and $Ann\left(
I\right) $ are coprime ideals and this is true for all ideals of $A$. We
remark that this ring has $8$ ideals.

4) Let $A=\mathbb{Z}_{2}\times \mathbb{Z}_{2}\times \mathbb{Z}_{3}\simeq
\mathbb{Z}_{2}\times \mathbb{Z}_{6}=\{\left( \widehat{0},\widehat{0},%
\overline{0}\right) ,\left( \widehat{0},\widehat{1},\overline{0}\right)
,\left( \widehat{1},\widehat{0},\overline{0}\right) ,\left( \widehat{0},%
\widehat{1},\overline{1}\right) ,$\newline
$\left( \widehat{0},\widehat{1},\overline{2}\right) ,\left( \widehat{1},%
\widehat{1},\overline{2}\right) ,\left( \widehat{0},\widehat{0},\overline{1}%
\right) ,\left( \widehat{1},\widehat{1},\overline{0}\right) ,\left( \widehat{%
1},\widehat{0},\overline{1}\right) ,$\newline
$\left( \widehat{0},\widehat{0},\overline{2}\right) ,\left( \widehat{1},%
\widehat{0},\overline{2}\right) ,\left( \widehat{1},\widehat{1},\overline{1}%
\right) \}.$ For $I=\{\left( \widehat{0},\widehat{0},\overline{0}\right)
,\left( \widehat{0},\widehat{1},\overline{0}\right) \}$, the annihilator is
\newline
$Ann\left( I\right) =\{\left( \widehat{0},\widehat{0},\overline{0}\right)
,\left( \widehat{1},\widehat{0},\overline{0}\right) ,\left( \widehat{0},%
\widehat{0},\overline{1}\right) ,\left( \widehat{1},\widehat{0},\overline{1}%
\right) ,\left( \widehat{0},\widehat{0},\overline{2}\right) ,\left( \widehat{%
1},\widehat{0},\overline{2}\right) \}$ and is coprime to $I.$ For $%
I=\{\left( \widehat{0},\widehat{0},\overline{0}\right) ,\left( \widehat{0},%
\widehat{1},\overline{0}\right) ,\left( \widehat{1},\widehat{0},\overline{0}%
\right) ,\left( \widehat{1},\widehat{1},\overline{0}\right) \}$, the
annihilator is \newline
$Ann\left( I\right) =\{\left( \widehat{0},\widehat{0},\overline{0}\right)
,\left( \widehat{0},\widehat{0},\overline{1}\right) ,\left( \widehat{0},%
\widehat{0},\overline{2}\right) \}$ and is coprime to $I.$

For $I=\{\left( \widehat{0},\widehat{0},\overline{0}\right) ,\left( \widehat{%
1},\widehat{0},\overline{0}\right) \}$, the annihilator is \newline
$Ann\left( I\right) =\{\left( \widehat{0},\widehat{0},\overline{0}\right)
,\left( \widehat{0},\widehat{1},\overline{0}\right) ,\left( \widehat{0},%
\widehat{0},\overline{1}\right) ,\left( \widehat{0},\widehat{1},\overline{1}%
\right) ,\left( \widehat{0},\widehat{0},\overline{2}\right) ,\left( \widehat{%
0},\widehat{1},\overline{2}\right) \}$ and is coprime to $I$, etc.

5) Let $A=\mathbb{Z}_{4}\times \mathbb{Z}_{3}\simeq \mathbb{Z}_{12}$. For $%
I=\{\widehat{0},\widehat{6}\}$, the annihilator is \newline
$Ann\left( I\right) =\{\widehat{0},\widehat{2},\widehat{4},\widehat{6},%
\widehat{8},\widehat{10}\}$ and $Ann\left( I\right) $ is not coprime to $I$.
\medskip

6) If $A$ is a finite integral domain, then $(Id(A),\cap ,+,Ann,0=\{0\},1=A)$
is a Boolean algebra. Indeed, every finite integral domain is a field, so $%
Id(A)=\{\{0\},A\}\simeq L_{2}$ is a Boolean algebra.

7) If $K$ is a field, then $Id(K)=\{\mathbf{0},K\}$ and $I$ and $Ann(I)$ are
coprime for every $I\in Id(K).$
\end{example}

\medskip

Using Corollary \ref{Corollary 3.13} and Proposition \ref{Proposition 3.26}
we obtain:

\begin{corollary}
\label{Corollary 3.30} \textit{If }$A$\textit{\ is a finite commutative ring
with }$\left\vert A\right\vert =n=p_{1}^{\alpha _{1}}\cdot ...\cdot
p_{r}^{\alpha _{r}},$\textit{\ then its set of ideals is an MV-algebra. Of
all its representations, only if }$A$\textit{\ is isomorphic to the ring }$%
\underset{\alpha _{1}-time}{\underbrace{\mathbb{Z}_{p_{1}}\times \mathbb{Z}%
_{p_{1}}\times ...\times \mathbb{Z}_{p_{1}}}\times ...\times }\underset{%
\alpha _{r}-time}{\underbrace{\mathbb{Z}_{p_{r}}\times \mathbb{Z}%
_{p_{r}}\times ...\times \mathbb{Z}_{p_{r}}}}$\textit{\ the lattice of its
ideals is a Boolean algebra.}
\end{corollary}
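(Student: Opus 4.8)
The plan is to separate the statement into its two assertions and reduce each to material already in hand. The first assertion---that $Id(A)$ is an MV-algebra---is immediate: since $A$ is a finite commutative unitary ring, Corollary \ref{Corollary 3.13} gives that $A$ has Chang property, and Theorem \ref{Theorem 3.4} then yields at once that $Id(A)$ is an MV-algebra. No further computation is needed here.

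For the second assertion I would argue both implications through the idempotent-ideal criterion of Corollary \ref{Corollary 3.23}, by which $Id(A)$ is a Boolean algebra if and only if $I$ and $Ann(I)$ are coprime for every $I\in Id(A)$ (equivalently, every ideal of $A$ is idempotent). For the sufficiency direction, suppose $A$ is isomorphic to the displayed product in which each prime field $\mathbb{Z}_{p_{i}}$ occurs $\alpha_{i}$ times. This is precisely a ring of the type handled in Proposition \ref{Proposition 3.26} (Case 2 covers the repeated primes), so there $I$ and $Ann(I)$ are coprime for every $I\in Id(A)$, and Corollary \ref{Corollary 3.23} delivers that $Id(A)$ is Boolean.

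For the necessity direction I would invoke the fundamental theorem of finite abelian groups (Theorem \ref{Theorem 3.12}) to write $A\cong \mathbb{Z}_{q_{1}^{\beta_{1}}}\times \cdots \times \mathbb{Z}_{q_{s}^{\beta_{s}}}$ with each $q_{j}$ prime, so that $Id(A)\cong \prod_{j}Id(\mathbb{Z}_{q_{j}^{\beta_{j}}})$. The crux is to rule out exponents $\beta_{j}\geq 2$: in that case the ideal $<q_{j}>$ of $\mathbb{Z}_{q_{j}^{\beta_{j}}}$ satisfies $<q_{j}>\otimes <q_{j}>=<q_{j}^{2}>\subsetneq <q_{j}>$ (since $q_{j}\notin <q_{j}^{2}>$ when $\beta_{j}\geq 2$), so the corresponding coordinate ideal of $A$ is non-idempotent, and Corollary \ref{Corollary 3.23} forbids $Id(A)$ from being Boolean. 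Hence every $\beta_{j}=1$, i.e. $A$ is a direct product of prime fields; comparing orders through $n=|A|=\prod_{j}q_{j}=p_{1}^{\alpha_{1}}\cdots p_{r}^{\alpha_{r}}$ then forces exactly $\alpha_{i}$ of the factors to be $\mathbb{Z}_{p_{i}}$, which is the displayed ring. Alternatively, once coprimality of $I$ and $Ann(I)$ holds for all $I$, Remark \ref{Remark 3.27} identifies $A$ outright as such a product of prime fields, and only the order comparison remains.

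I expect the only genuine subtlety to lie in this necessity direction, and the work there is organizational rather than computational. One must combine the decomposition $Id(A)\cong \prod_{j}Id(\mathbb{Z}_{q_{j}^{\beta_{j}}})$ with the fact that a finite product of MV-algebras is Boolean exactly when every factor is, and observe that each factor $Id(\mathbb{Z}_{q_{j}^{\beta_{j}}})$ is a chain of $\beta_{j}+1$ elements, hence Boolean only for $\beta_{j}=1$. The single concrete computation $<q_{j}>\otimes <q_{j}>=<q_{j}^{2}>$ is what eliminates the higher prime powers, while the remainder is bookkeeping with the prime factorization of $n$ and an appeal to the uniqueness clause of Theorem \ref{Theorem 3.12}.
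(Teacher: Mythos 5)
Your proposal is correct and follows essentially the same route as the paper: Corollary \ref{Corollary 3.13} (via Theorem \ref{Theorem 3.4}) for the MV-algebra assertion, and Proposition \ref{Proposition 3.26} combined with the idempotency/coprimality criterion of Corollary \ref{Corollary 3.23} for the Boolean characterization. The only divergence is one of detail: the paper delegates the necessity direction to the unproved Remark \ref{Remark 3.27}, whereas you make it explicit by decomposing $Id(A)\cong \prod_{j}Id(\mathbb{Z}_{q_{j}^{\beta_{j}}})$ and exhibiting the non-idempotent ideal $<q_{j}>$ when $\beta_{j}\geq 2$, which fills in exactly what the paper's argument implicitly requires.
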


\medskip Using the equivalent characterizations for Boolean elements in
residuated lattices (see [BT; 03]), we obtain necessary and sufficient
conditions for rings in which the lattice of ideals is a Boolean algebra:

\begin{corollary}
\label{Corollary 3.32} \textit{Let }$A$\textit{\ be a commutative unitary
ring. The following assertions are equivalent:}

\textit{(i) }$(Id(A),\cap ,+,Ann,0=\{0\},1=A)$\textit{\ is a Boolean algebra;%
}

\textit{(ii) }$I$\textit{\ and }$Ann(I)$\textit{\ are coprime, for every }$%
I\in Id(A);$

\textit{(iii) }$(I:Ann(I))=I,$\textit{\ for every }$I\in Id(A);$

\textit{(iv) For }$J\in Id(A),$\textit{\ }$Ann(J)\subseteq J$\textit{\
implies }$J=A;$

\textit{(v) For }$I,J\in Id(A),$\textit{\ }$I\subseteq (J:Ann(J))$\textit{\
\ implies }$I\subseteq J;$

\textit{(vi) For }$I,J,K\in Id(A),$\textit{\ }$I\subseteq (J:Ann(K))$\textit{%
\ and }$J\subseteq K;$\textit{\ implies }$I\subseteq K;$

\textit{(vii ) For }$I,J\in Id(A),$\textit{\ }$(J:I)\subseteq I$\textit{\ \
implies }$I=A.$
\end{corollary}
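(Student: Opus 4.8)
The plan is to reduce the statement to the known fact (recalled just before the corollary, see [GDCK; 10]) that a residuated lattice is a Boolean algebra if and only if $x\vee x^{\ast}=1$ for every element $x$, and then to read each condition through the residuated-lattice structure of $Id(A)$, in which $I\rightarrow J=(J:I)$, $I^{\ast}=Ann(I)$, the join is $+$, the meet is $\cap$, and the top element is $A$. Under this dictionary the equality $I\vee I^{\ast}=A$ is precisely the coprimality of $I$ and $Ann(I)$, so the known fact gives $(i)\Leftrightarrow(ii)$ at once, and it remains to show that $(iii)$--$(vii)$ are, for every $I$, equivalent reformulations of $I+Ann(I)=A$.

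For the core equivalences I would run the cycle $(ii)\Rightarrow(iii)\Rightarrow(v)\Rightarrow(iv)\Rightarrow(ii)$ and attach $(v)\Leftrightarrow(vi)$ and $(iv)\Leftrightarrow(vii)$. The step $(ii)\Rightarrow(iii)$ is the heart of the argument: writing $D=(I:Ann(I))$, one always has $I\subseteq D$ by Remark \ref{Remark 2.1}(7), and assuming $(ii)$ one computes, using distributivity of $\otimes$ over $+$, that $D=D\otimes A=D\otimes(I+Ann(I))=(D\otimes I)+(D\otimes Ann(I))\subseteq I+I=I$, because $D\otimes Ann(I)=(I:Ann(I))\otimes Ann(I)\subseteq I$ by Remark \ref{Remark 2.1}(8) and $D\otimes I\subseteq I$; hence $D=I$, which is $(iii)$. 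Then $(iii)\Rightarrow(v)$ is immediate, since $(iii)$ rewrites $(J:Ann(J))$ as $J$. Next $(v)\Rightarrow(iv)$ follows because $Ann(J)\subseteq J$ forces $(J:Ann(J))\supseteq(J:J)=A$ by antitonicity of $(J:{-})$, so taking $I=A$ in $(v)$ yields $J=A$. Finally $(iv)\Rightarrow(ii)$ is obtained by testing $(iv)$ on $J=I+Ann(I)$: by Remark \ref{Remark 2.1}(5), $Ann(I+Ann(I))=Ann(I)\cap Ann(Ann(I))\subseteq Ann(I)\subseteq I+Ann(I)$, so $(iv)$ gives $I+Ann(I)=A$.

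The two side equivalences are short monotonicity arguments. For $(v)\Leftrightarrow(vi)$: taking $K=J$ in $(vi)$ is literally $(v)$, while conversely $J\subseteq K$ gives $(J:Ann(K))\subseteq(K:Ann(K))$, so $I\subseteq(J:Ann(K))$ forces $I\subseteq(K:Ann(K))$ and $(v)$ applied with $K$ in the role of $J$ yields $I\subseteq K$. For $(iv)\Leftrightarrow(vii)$: specializing $J=\mathbf{0}$ in $(vii)$ turns $(J:I)$ into $Ann(I)$, recovering $(iv)$; conversely, from $\mathbf{0}\subseteq J$ one gets $Ann(I)=(\mathbf{0}:I)\subseteq(J:I)$, so the hypothesis $(J:I)\subseteq I$ of $(vii)$ implies $Ann(I)\subseteq I$ and $(iv)$ delivers $I=A$. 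All these identities are exactly the element-wise characterizations of a Boolean element recorded in [BT; 03], so one may alternatively cite them directly instead of reproving them here.

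I expect the only genuinely non-mechanical points to be the two specializations that break the ``for every $I$'' symmetry, namely choosing $J=I+Ann(I)$ in $(iv)\Rightarrow(ii)$ and $J=\mathbf{0}$ in $(vii)\Rightarrow(iv)$, together with the distributivity computation in $(ii)\Rightarrow(iii)$; everything else is a matter of keeping straight which inclusions (such as $I\subseteq(I:J)$ and $(I:J)\otimes J\subseteq I$) hold unconditionally in $Id(A)$ versus which ones encode Boolean-ness.
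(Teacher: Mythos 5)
Your proposal is correct: every step checks out, from the identification of (i) with (ii) via the fact (recalled in the paper from [GDCK; 10]) that a residuated lattice is a Boolean algebra iff $x\vee x^{\ast }=1$ for every $x$, through the cycle $(ii)\Rightarrow (iii)\Rightarrow (v)\Rightarrow (iv)\Rightarrow (ii)$, to the two specializations giving $(v)\Leftrightarrow (vi)$ and $(iv)\Leftrightarrow (vii)$. The difference from the paper is one of execution rather than strategy: the paper supplies no argument at all for this corollary, merely asserting that it follows from the equivalent characterizations of Boolean elements in residuated lattices in [BT; 03], applied elementwise to the residuated lattice $Id(A)$. You actually prove those characterizations, and you do so in concrete ideal-theoretic terms: the key step $(ii)\Rightarrow (iii)$ uses unitality ($D\otimes A=D$) and distributivity of the ideal product over the ideal sum together with Remark \ref{Remark 2.1}(7),(8); the step $(iv)\Rightarrow (ii)$ tests (iv) on $J=I+Ann(I)$ via Remark \ref{Remark 2.1}(5); and $(vii)\Leftrightarrow (iv)$ is the specialization $J=\mathbf{0}$ plus monotonicity of the quotient in its numerator. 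Note that your computation for $(ii)\Rightarrow (iii)$ is really the general residuated-lattice proof in disguise, since $\odot $ distributes over $\vee $ and $x\odot 1=x$ in any residuated lattice, so your argument would equally reprove the relevant fragment of [BT; 03] rather than only its instance in $Id(A)$. What the paper's citation buys is brevity and placement of the result inside the general theory; what your version buys is a self-contained verification whose only inputs are elementary properties of ideal quotients and annihilators, which is worth having given that the paper never spells out which characterizations from [BT; 03] it invokes.
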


Moreover, using some connections between Heyting algebras, MV-algebras and
involutive residuated lattices we obtain necessary and sufficient conditions
for which the lattice of ideals in commutative unitary rings is a Boolean
algebra. The following results can be obtained by the straightforward
calculations, using Proposition \ref{Proposition 3.33}:

\begin{corollary}
\label{Corollary 3.34}\textit{\ Let }$A$\textit{\ be a commutative unitary
ring. The following assertions are equivalent:}

\textit{(i) }$A$ is a Von Neumann regular ring in which
\begin{equation*}
Ann(Ann(I))=I\mathit{,}
\end{equation*}%
\textit{\ for every }$I\in Id(A);$

(ii) $A$ is a Von Neumann regular ring that satisfies the condition
\begin{equation*}
Ann(J)=\mathbf{0}\Rightarrow J=A,
\end{equation*}%
for $J\in Id(A);$

\textit{(iii) }$(Id(A),\cap ,+,Ann,0=\{0\},1=A)$\textit{\ is a Boolean
algebra.}
\end{corollary}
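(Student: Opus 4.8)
The plan is to prove Corollary~\ref{Corollary 3.34} by showing the cycle of implications $(i)\Rightarrow(ii)\Rightarrow(iii)\Rightarrow(i)$, leaning on Proposition~\ref{Proposition 3.33} and the earlier characterizations of Von Neumann regularity and Boolean algebras. The governing observation is that, in the residuated lattice $(Id(A),\cap,+,\otimes,\rightarrow,0,1)$, Von Neumann regularity is exactly the idempotency condition $I\otimes I=I$ (Theorem~\ref{Theorem 3.18}, via $(i)\Leftrightarrow(v)$), i.e.\ $x^{2}=x$ for every element $x=I$. The double-negation hypothesis $Ann(Ann(I))=I$ is exactly $x^{\ast\ast}=x$, since $I^{\ast}=Ann(I)$. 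So the three clauses of Proposition~\ref{Proposition 3.33} translate verbatim into ring-theoretic statements about $Id(A)$, and that Proposition supplies the equivalences we need.

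First I would establish $(i)\Leftrightarrow(ii)$. Both assume $A$ is Von Neumann regular, so $x^{2}=x$ holds for all $x\in Id(A)$; the only difference is the second condition. Clause $(i)$ says $x^{\ast\ast}=x$ for all $x$, and clause $(ii)$ says $[x^{\ast}=0\Rightarrow x=1]$ (reading $Ann(J)=\mathbf{0}$ as $J^{\ast}=0$ and $J=A$ as $J=1$). These are precisely conditions $(i)$ and $(ii)$ of Proposition~\ref{Proposition 3.33}, which are equivalent in any residuated lattice once $x^{2}=x$ is known. Thus the equivalence of $(i)$ and $(ii)$ is immediate from that Proposition.

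Next I would prove $(i)\Rightarrow(iii)$ and $(iii)\Rightarrow(i)$. For the forward direction, $(i)$ gives both $x^{2}=x$ and $x^{\ast\ast}=x$ in $Id(A)$, so by Proposition~\ref{Proposition 3.33}$(i)\Rightarrow(iii)$ we get $x\vee x^{\ast}=1$, i.e.\ $I+Ann(I)=A$ for every $I$; by the remark recalled just before Theorem~\ref{Theorem 3.21} (a residuated lattice is Boolean iff $x\vee x^{\ast}=1$), $Id(A)$ is a Boolean algebra. Conversely, if $Id(A)$ is Boolean then $x\vee x^{\ast}=1$ holds, and Proposition~\ref{Proposition 3.33}$(iii)\Rightarrow(i)$ returns both $x^{2}=x$ and $x^{\ast\ast}=x$; the first says $A$ is Von Neumann regular (Theorem~\ref{Theorem 3.18}) and the second is the annihilator condition $Ann(Ann(I))=I$. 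This closes the cycle.

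I do not expect a genuine obstacle here, since the whole statement is a faithful translation of Proposition~\ref{Proposition 3.33} through the dictionary $x^{2}=x\leftrightarrow$ Von Neumann regular, $x^{\ast\ast}=x\leftrightarrow Ann(Ann(I))=I$, and $x\vee x^{\ast}=1\leftrightarrow$ Boolean. The one point requiring care is the bookkeeping of $0$ and $1$ in clause $(ii)$: I must confirm that $Ann(J)=\mathbf{0}$ is the correct reading of $J^{\ast}=0$ and that $J=A$ is $J=1$, which holds because $0=\{0\}$, $1=A$, and $J^{\ast}=Ann(J)$ in $Id(A)$. With that identification fixed, the corollary follows, as the paper states, by straightforward application of Proposition~\ref{Proposition 3.33}.
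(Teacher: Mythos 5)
Your proposal is correct and takes essentially the same route as the paper: the paper gives no separate argument for Corollary~\ref{Corollary 3.34}, stating only that it follows by straightforward calculation from Proposition~\ref{Proposition 3.33} applied to the residuated lattice $(Id(A),\cap,+,\otimes,\rightarrow,0=\{0\},1=A)$, which is exactly the translation you carry out (Von Neumann regularity $\leftrightarrow$ $I\otimes I=I$ via Theorem~\ref{Theorem 3.18} and Proposition~\ref{Proposition 3.18.1}, $Ann(Ann(I))=I\leftrightarrow x^{\ast\ast}=x$, $Ann(J)=\mathbf{0}\Rightarrow J=A\leftrightarrow[y^{\ast}=0\Rightarrow y=1]$, and the recalled fact that a residuated lattice is Boolean iff $x\vee x^{\ast}=1$). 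Your bookkeeping of $0=\{0\}$, $1=A$, $I^{\ast}=Ann(I)$ is the right reading, so nothing is missing.
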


\section{\textbf{Connections with binary block codes}}

In [FV; 20] have been defined binary block codes associated to MV-algebras
and Wajsberg algebras and in [FHSV; 20] a classification of these algebras
was done.

One of the question which arise is what these codes really represent.

In Section 3, we proved that the lattice of ideals of a finite commutative
ring is a Boolean algebra or an MV-algebra, see Corollary \ref{Corollary
3.30}.

Using this result, in this section we construct binary block codes
associated to the lattice of ideals in a finite commutative ring $%
A=\{x_{1},x_{2},...,x_{n}\}$ with $n$ elements.

From Remark \ref{Remark 3.14}, $Id\left( A\right) ,$ the set of its ideals
of $A,$ \ has $\mathcal{N}_{A}$ elements (obviously, $\mathcal{N}_{A}\leq n$%
). From Corollary \ref{Corollary 3.30}, $Id\left( A\right) $ can be a
Boolean algebra or an MV-algebra (that is not Boolean) and it can generates
a binary block codes. To each ideal $I_{\alpha }$ from $Id\left( A\right) $
we associate a codeword $\alpha $ of length $n,\alpha =\left( \alpha
_{1},...,\alpha _{n}\right) ,\alpha _{i}=1$ if $x_{i}\in $ $I_{\alpha }$ and
$\alpha _{i}=0$, otherwise.

We denote with $\mathcal{C}_{A}$ the set of these codewords, which is a
binary block code. The obtained block code $\mathcal{C}_{A}=%
\{w_{1},w_{2},...,w_{\mathcal{N}_{A}}\}$, with lexicographic order is a
totally ordered set. On $\mathcal{C}_{A}$ we can define a Boolean algebra
structure, if $\mathcal{N}_{A}$ is a power of $2$, or an MV-algebra
structure, otherwise. This algebra generates a binary block code $\mathcal{C}%
_{2}$.

The code $\mathcal{C}_{2}$ is called the \textit{reduced code} of the code $%
\mathcal{C}_{A}$. It is clear that all finite commutative rings $A$ with the
set $Id\left( A\right) $ having the same number of elements, $\mathcal{N}%
_{A} $, generate the same reduced binary block code with $\mathcal{N}_{A}$
codewords of length $n$.

By using the above definitions from Section 2, we have the following
correspondence between the operations of an MV-algebra $M:$
\begin{equation*}
x\odot y=\left( x^{\ast }\oplus y^{\ast }\right) ^{\ast }\text{ and }x\oplus
y=\left( x^{\ast }\odot y^{\ast }\right) ^{\ast }.
\end{equation*}%
In the associated Wajsberg algebra, we have that
\begin{equation*}
x\rightarrow y=x^{\ast }\oplus y=\left( x\odot y^{\ast }\right) ^{\ast }%
\text{.}
\end{equation*}%
Since, from Corollary \ref{Corollary 3.30}, the associated lattice of ideals
in the ring $A$ is a Wajsberg algebra, then the Wajsberg implication is
\begin{equation*}
I\rightarrow J=Ann\left( I\otimes Ann\left( J\right) \right) ,
\end{equation*}%
for every $I,J\in Id(A).$

It is known that ([FHSV; 20]\thinspace ) if $S$ is a nonempty set and\textit{%
\ }$(W,\rightarrow ,^{\ast },1)$\textit{\ }is a Wajsberg algebra then:

\begin{definition}
\label{Definition 4.1.} A mapping $f:S\rightarrow W$ is called a \textit{%
W-function} on $S$ and a map $f_{w}:S\rightarrow \{0,1\},w\in W$, such that
\begin{equation*}
f_{w}\left( x\right) =1\text{, if and only if \ }w\rightarrow f\left(
x\right) =1\text{, for every }x\in S\text{,}
\end{equation*}%
is called a \textit{cut function} \textit{of the map} $f~.$ \newline
The subset
\begin{equation*}
S_{w}=\{x\in S~:~w\rightarrow f\left( x\right) =1\}\subseteq S
\end{equation*}
is called a \textit{cut subset} of the set $S$.
\end{definition}

If $f:S\rightarrow W$ \ is a W-function on $S$ and we define on $W$ the
binary relation
\begin{equation*}
\forall w_{1},w_{2}\in W,w_{1}\sim w_{2}~~\text{if~and~only~if~~}%
S_{w_{1}}=S_{w_{2}},
\end{equation*}%
then, this relation is an equivalence relation on $W$.

For every $w\in W,$ we denote by $\widetilde{w}$ the equivalence class of $w$%
.\medskip

Now, let $A$ be a finite commutative ring with $n$ ideals, $S=\{1,...,n\}$
be a nonempty set and\textit{\ }$(Id(A),\rightarrow ,Ann,1=A)$\textit{\ }be
the Wajsberg algebra of ideals (see Corollary \ref{Corollary 3.30}).

Using above notations, to each equivalence class $\widetilde{I}$ (with $I\in
Id(A))$, will correspond the codeword $f_{I}=I_{1}I_{2}...I_{n}$, with $%
I_{i}=j$, if and only if $f_{I}\left( i\right) =j,i\in S,j\in \{0,1\}$%
.\medskip

\begin{example}
\label{Example 4.2} We consider \ the commutative ring $\ A=(\mathbb{Z}%
_{n},+,\cdot ).$

\textbf{Case 1,} $n=4$\textbf{.}

i) For $A=\mathbb{Z}_{4}$, the lattice $Id\left( \mathbb{Z}_{4}\right) $ has
$3$ elements, $Id\left( \mathbb{Z}_{4}\right) =\{\widehat{0},\{\widehat{0},%
\widehat{2}\},\mathbb{Z}_{4}\}=\{O,R,E\}$. Since $AnnO=E$, $AnnE=O$, $AnnR=R$
\ and $R\otimes R=O,$ so using Corollaries \ref{Corollary 3.13} and \ref%
{Corollary 3.23}, the obtained lattice is a Wajsberg (MV) algebra, with the
implication given in the below table:
\begin{equation*}
\begin{tabular}{l|lll}
$\rightarrow $ & $O$ & $R$ & $E$ \\ \hline
$O$ & $E$ & $E$ & $E$ \\
$R$ & $R$ & $E$ & $E$ \\
$E$ & $O$ & $R$ & $E$%
\end{tabular}%
.
\end{equation*}

The code $\mathcal{C}_{A}$ attached to the algebra $Id\left( \mathbb{Z}%
_{4}\right) $ is $\mathcal{C}_{\mathbb{Z}_{4}}=\{0001,0101,1111\}$. Its
reduced code is $\mathcal{C}_{2}=\{111,011,001\}$.

ii) For $A=\mathbb{Z}_{2}\times \mathbb{Z}_{2}=\{\left( \widehat{0},\widehat{%
0}\right) ,\left( \widehat{0},\widehat{1}\right) ,\left( \widehat{1},%
\widehat{0}\right) ,\left( \widehat{1},\widehat{1}\right) \}$, we obtain the
lattice $Id\left( \mathbb{Z}_{2}\times \mathbb{Z}_{2}\right) =\{\left(
\widehat{0},\widehat{0}\right) ,\{\left( \widehat{0},\widehat{0}\right)
,\left( \widehat{0},\widehat{1}\right) \},\{\left( \widehat{0},\widehat{0}%
\right) ,\left( \widehat{1},\widehat{0}\right) \},\mathbb{Z}_{2}\times
\mathbb{Z}_{2}\}=\{O,R,B,E\}$, which is a Wajsberg (Boolean) algebra since $%
I^{2}=I,$ for every $I\in Id(\mathbb{Z}_{2}\times \mathbb{Z}_{2}),$ see
Corollary \ref{Corollary 3.23}. Also, $AnnO=E,AnnE=O,AnnR=B,AnnB=R$,\ thus,
the implication table is
\begin{equation*}
\begin{tabular}{l|llll}
$\rightarrow $ & $O$ & $R$ & $B$ & $E$ \\ \hline
$O$ & $E$ & $E$ & $E$ & $E$ \\
$R$ & $B$ & $E$ & $B$ & $E$ \\
$B$ & $R$ & $R$ & $E$ & $E$ \\
$E$ & $O$ & $R$ & $B$ & $E$%
\end{tabular}%
.
\end{equation*}%
The code associated to the lattice $Id\left( \mathbb{Z}_{2}\times \mathbb{Z}%
_{2}\right) $ is $\mathcal{C}_{\mathbb{Z}_{2}\times \mathbb{Z}%
_{2}}=\{1000,1100,1010,1111\}$. The reduced code is $\mathcal{C}%
_{2}=\{1111,0101,0011,0001\}$.

\textbf{Case 2}, $n=6$.

For $A=\mathbb{Z}_{6}$, the lattice $Id\left( \mathbb{Z}_{6}\right) $ has $4$
elements, $Id\left( \mathbb{Z}_{6}\right) =\{\widehat{0},\{\widehat{0},%
\widehat{3}\},\{\widehat{0},\widehat{2},\widehat{4}\},\mathbb{Z}%
_{6}\}=\{O,R,B,E\}$. Since $AnnO=E$, $AnnE=O$, $AnnR=B$, $AnnB=R$ and $I$ $%
^{2}=I,$ for every $I\in Id(\mathbb{Z}_{6}{}),$ the obtained lattice is a
Wajsberg (Boolean) algebra, with the implication table

\begin{equation*}
\begin{tabular}{l|llll}
$\rightarrow $ & $O$ & $R$ & $B$ & $E$ \\ \hline
$O$ & $E$ & $E$ & $E$ & $E$ \\
$R$ & $B$ & $E$ & $B$ & $E$ \\
$B$ & $R$ & $R$ & $E$ & $E$ \\
$E$ & $O$ & $R$ & $B$ & $E$%
\end{tabular}%
.
\end{equation*}%
\qquad

The code attached to the lattice $Id\left( \mathbb{Z}_{6}\right) $ is $%
\mathcal{C}_{\mathbb{Z}_{6}}=\{000001,001001,010101,$ $111111\}$ and the
reduced code is $\mathcal{C}_{2}=\{1111,0101,0011,0001\}$, the same as in
the Case 1, ii).

The case $A=\mathbb{Z}_{2}\times \mathbb{Z}_{3}=\{\left( \widehat{0},%
\overline{0}\right) $,$\left( \widehat{0},\overline{1}\right) $,$\left(
\widehat{0},\overline{2}\right) $,$\left( \widehat{1},\overline{0}\right) $,$%
\left( \widehat{1},\overline{1}\right) $,$\left( \widehat{1},\overline{2}%
\right) \}$ is similar with the above case, since $\mathbb{Z}_{2}\times
\mathbb{Z}_{3}\simeq $ $\mathbb{Z}_{6}$.

\textbf{Case 3}, $n=8$.

i) For $A=\mathbb{Z}_{8}$, the lattice $Id\left( \mathbb{Z}_{8}\right) $ has
$4$ elements, $Id\left( \mathbb{Z}_{8}\right) =\{\widehat{0},\{\widehat{0},%
\widehat{4}\},\{\widehat{0},\widehat{2},\widehat{4},\widehat{6}\},$ $\mathbb{%
Z}_{8}\}=\{O,R,B,E\}$. Since $AnnO=E$, $AnnE=O$, $AnnR=B$, $AnnB=R$ and $%
R\otimes R=O,$ $B\otimes B=R,$ using Corollary \ref{Corollary 3.23}, the
obtained lattice is a Wajsberg (MV) algebra, with the implication table
given in the below table:
\begin{equation*}
\begin{tabular}{l|llll}
$\rightarrow $ & $O$ & $R$ & $B$ & $E$ \\ \hline
$O$ & $E$ & $E$ & $E$ & $E$ \\
$R$ & $B$ & $E$ & $E$ & $E$ \\
$B$ & $R$ & $B$ & $E$ & $E$ \\
$E$ & $O$ & $R$ & $B$ & $E$%
\end{tabular}%
.
\end{equation*}%
The code attached to the lattice $Id\left( \mathbb{Z}_{8}\right) $ is $%
\mathcal{C}_{\mathbb{Z}_{8}}=\{00000001,00010001,01010101,$ $11111111\}$ and
the reduced code is $\mathcal{C}_{2}=\{1111,0111,0011,0001\}$.

ii) For $A=\mathbb{Z}_{2}\times \mathbb{Z}_{4}=\{\left( \widehat{0},%
\overline{0}\right) $, $\left( \widehat{0},\overline{1}\right) $, $\left(
\widehat{0},\overline{2}\right) $, $\left( \widehat{0},\overline{3}\right) $%
, $\left( \widehat{1},\overline{0}\right) $, $\left( \widehat{1},\overline{1}%
\right) $, $\left( \widehat{1},\overline{2}\right) $, $\left( \widehat{1},%
\overline{3}\right) \}$, the lattice of ideals is \newline
$Id\left( \mathbb{Z}_{2}\times \mathbb{Z}_{4}\right) $=$\{\left( \widehat{0},%
\overline{0}\right) $,$\{\left( \widehat{0},\overline{0}\right) $,$\left(
\widehat{0},\overline{1}\right) $,$\left( \widehat{0},\overline{2}\right) $,$%
\left( \widehat{0},\overline{3}\right) \}$,\newline
$\{\left( \widehat{0},\overline{0}\right) ,\left( \widehat{1},\overline{0}%
\right) ,\left( \widehat{0},\overline{2}\right) ,\left( \widehat{1},%
\overline{2}\right) \}$,$\{\left( \widehat{0},\overline{0}\right) ,\left(
\widehat{0},\overline{2}\right) \}$, $\{\left( \widehat{0},\overline{0}%
\right) ,\left( \widehat{1},\overline{0}\right) \}$, $\mathbb{Z}_{2}\times
\mathbb{Z}_{4}\}=\{O,B,D,R,C,E\}$. Since $AnnO=E$, $AnnE=O$, $AnnR=D$, $%
AnnB=C$, $AnnD=R$, $AnnC=B$ \ and $R\otimes R=O,$ using Corollary \ref%
{Corollary 3.23}, the obtained lattice is a Wajsberg (MV) algebra, with the
implication table given below:%
\begin{equation*}
\begin{tabular}{l|llllll}
$\rightarrow $ & $O$ & $R$ & $B$ & $C$ & $D$ & $E$ \\ \hline
$O$ & $E$ & $E$ & $E$ & $E$ & $E$ & $E$ \\
$R$ & $D$ & $E$ & $E$ & $D$ & $E$ & $E$ \\
$B$ & $C$ & $D$ & $E$ & $C$ & $D$ & $E$ \\
$C$ & $B$ & $B$ & $B$ & $E$ & $E$ & $E$ \\
$D$ & $R$ & $B$ & $B$ & $D$ & $E$ & $E$ \\
$E$ & $O$ & $R$ & $B$ & $C$ & $D$ & $E$%
\end{tabular}%
.
\end{equation*}%
The code attached to the lattice $Id\left( \mathbb{Z}_{2}\times \mathbb{Z}%
_{4}\right) $ is $\mathcal{C}_{\mathbb{Z}_{2}\times \mathbb{Z}%
_{4}}=\{00000001$, $00000101$, $00001111$, $00010001$, $01010101$,$%
11111111\} $ and the reduced code is $\mathcal{C}_{2}=\{111111$, $011011$, $%
001001$, $000111$, $000011$, $00000001\}$.

iii) For $A=\mathbb{Z}_{2}\times \mathbb{Z}_{2}\times \mathbb{Z}%
_{2}=\{\left( \widehat{0},\widehat{0},\widehat{0}\right) $, $\left( \widehat{%
0},\widehat{0},\widehat{1}\right) $, $\left( \widehat{0},\widehat{1},%
\widehat{0}\right) $, $\left( \widehat{0},\widehat{1},\widehat{1}\right) $, $%
\left( \widehat{1},\widehat{0},\widehat{0}\right) $, $\left( \widehat{1},%
\widehat{0},\widehat{1}\right) $, $\left( \widehat{1},\widehat{1},\widehat{0}%
\right) $, $\left( \widehat{1},\widehat{1},\widehat{1}\right) $ $\}$, we
obtain the lattice \ $Id\left( \mathbb{Z}_{2}\times \mathbb{Z}_{2}\times
\mathbb{Z}_{2}\right) =\{\left( \widehat{0},\widehat{0},\widehat{0}\right) $%
, $\{\left( \widehat{0},\widehat{0},\widehat{0}\right) ,\left( \widehat{0},%
\widehat{0},\widehat{1}\right) \}$, $\{\left( \widehat{0},\widehat{0},%
\widehat{0}\right) ,\left( \widehat{0},\widehat{1},\widehat{0}\right) \}$, $%
\{\left( \widehat{0},\widehat{0},\widehat{0}\right) ,\left( \widehat{1},%
\widehat{0},\widehat{0}\right) \}$, $\{\left( \widehat{0},\widehat{0},%
\widehat{0}\right) ,\left( \widehat{0},\widehat{0},\widehat{1}\right) $, $%
\left( \widehat{0},\widehat{1},\widehat{0}\right) ,\left( \widehat{0},%
\widehat{1},\widehat{1}\right) \}$, $\{\left( \widehat{0},\widehat{0},%
\widehat{0}\right) ,\left( \widehat{0},\widehat{0},\widehat{1}\right) $, $%
\left( \widehat{1},\widehat{0},\widehat{0}\right) ,\left( \widehat{1},%
\widehat{0},\widehat{1}\right) \}$, $\{\left( \widehat{0},\widehat{0},%
\widehat{0}\right) ,\left( \widehat{0},\widehat{1},\widehat{0}\right) $, $%
\left( \widehat{1},\widehat{0},\widehat{0}\right) ,\left( \widehat{1},%
\widehat{1},\widehat{0}\right) \}$, $\mathbb{Z}_{2}\times \mathbb{Z}%
_{2}\times \mathbb{Z}_{2}\}=\{O,X,Y,T,Z,U,V,E\}$, which is a Wajsberg
(Boolean) algebra. Since $AnnO=E$, $AnnE=O$, $AnnX=V$, $AnnY=U$, $AnnZ=T$, $%
AnnT=Z$, $AnnU=Y$, $AnnV=X$, the implication is given in the below table:%
\begin{equation*}
\begin{tabular}{l|llllllll}
$\rightarrow $ & $O$ & $X$ & $Y$ & $Z$ & $T$ & $U$ & $V$ & $E$ \\ \hline
$O$ & $E$ & $E$ & $E$ & $E$ & $E$ & $E$ & $E$ & $E$ \\
$X$ & $V$ & $E$ & $V$ & $E$ & $V$ & $E$ & $V$ & $E$ \\
$Y$ & $U$ & $U$ & $E$ & $E$ & $U$ & $U$ & $E$ & $E$ \\
$Z$ & $T$ & $U$ & $V$ & $E$ & $T$ & $U$ & $V$ & $E$ \\
$T$ & $Z$ & $Z$ & $Z$ & $Z$ & $E$ & $E$ & $E$ & $E$ \\
$U$ & $Y$ & $Z$ & $Y$ & $Z$ & $V$ & $E$ & $V$ & $E$ \\
$V$ & $X$ & $X$ & $Z$ & $Z$ & $U$ & $U$ & $E$ & $E$ \\
$E$ & $O$ & $X$ & $Y$ & $Z$ & $T$ & $U$ & $V$ & $E$%
\end{tabular}%
\text{.}
\end{equation*}%
The code attached to the lattice $Id\left( \mathbb{Z}_{2}\times \mathbb{Z}%
_{2}\times \mathbb{Z}_{2}\right) $ is $\mathcal{C}_{\mathbb{Z}_{2}\times
\mathbb{Z}_{2}\times \mathbb{Z}_{2}}=\{00000001$, $00000011$, $00000101$, $%
00010001$, $00001111$, $00110011$, $01010101$, $11111111\}$, which is
similar to its reduced code $\mathcal{C}_{2}$.
\end{example}

\begin{definition}
\label{Definition 4.3} ([Li; 99])Let $\mathcal{C}$ be a code. The \textit{%
Hamming distance} $d\left( c_{1},c_{2}\right) $, between two codewords $%
c_{1},c_{2}$ of the same length, is the number of positions in which the
corresponding symbols are different. The minimum Hamming distance of the
code $\mathcal{C}$, denoted $d_{H}$, is%
\begin{equation*}
d_{H}=\text{\textit{min}}\{d\left( c_{1},c_{2}\right) ,c_{1},c_{2}\in
\mathcal{C},c_{1}\not=c_{2}\}\text{.}
\end{equation*}

The code $\mathcal{C}$ is considered to be $k$-error detecting if and only
if $d_{H}$ between any two of its codewords is at least $k+1$. The code $%
\mathcal{C}$ is $k$-errors correcting if and only if $d_{H}$ between any two
of its codewords is at least $2k+1$. Therefore, if $d_{H}\geq 2$, the code $%
\mathcal{C}$ is an error detecting code. If $d_{H}\geq 3$, the code $%
\mathcal{C}$ is an error correcting code.
\end{definition}

\begin{remark}
\label{Remark 4.4} The codes generated by a Wajsberg (MV) algebra is not
error detecting, nor error correcting codes, since $d_{H}$ is always $1$.
\end{remark}

\begin{proposition}
\label{Proposition 4.5} Let $A$ \textit{be a finite commutative unitary
ring, }$A=\mathbb{Z}_{k_{1}}\times \mathbb{Z}_{k_{2}}\times ...\times
\mathbb{Z}_{k_{r}},$ \textit{where} $k_{i}=p_{i}^{\alpha _{i}}$, $p_{i}$
\textit{a prime number,} \textit{for all} $i\in \{1,2,...,r\}$.

\textit{i) If\ }$p_{i}\geq 3$\textit{, for all} $i\in \{1,2,...,r\}$\textit{%
, then the attached code} $\mathcal{C}_{A}$ \textit{is an error detecting
code.}

\textit{ii) If\ }$p_{i}\geq 5$\textit{, for all }$i\in \{1,2,...,r\}$\textit{%
, then the attached code} $\mathcal{C}_{A}$ \textit{is an error correcting
code.}
\end{proposition}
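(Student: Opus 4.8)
The plan is to reduce the statement about the code $\mathcal{C}_A$ to an elementary estimate on cardinalities of ideals. By the construction of $\mathcal{C}_A$, the codeword attached to an ideal $I$ carries a $1$ in position $x$ precisely when $x\in I$; hence the Hamming distance between the codewords of two ideals $I,J\in Id(A)$ equals the number of elements lying in exactly one of them, namely $d(I,J)=|I|+|J|-2|I\cap J|$. By Definition \ref{Definition 4.3} it suffices to prove that the minimum distance satisfies $d_H\geq 2$ in case (i) and $d_H\geq 3$ in case (ii), so I would fix two distinct ideals $I\neq J$ and bound $d(I,J)$ from below.

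First I would exploit the product decomposition. Since $A=\mathbb{Z}_{k_1}\times\cdots\times\mathbb{Z}_{k_r}$, every ideal splits as $I=I_1\times\cdots\times I_r$ with $I_j\in Id(\mathbb{Z}_{k_j})$, and the ideals of $\mathbb{Z}_{p_j^{\alpha_j}}$ form a single chain $\{0\}\subset p_j^{\alpha_j-1}\mathbb{Z}_{k_j}\subset\cdots\subset\mathbb{Z}_{k_j}$ with cardinalities $1,p_j,\dots,p_j^{\alpha_j}$. In particular any two ideals of $\mathbb{Z}_{k_j}$ are comparable, so with $m_j=\min(|I_j|,|J_j|)$ and $M_j=\max(|I_j|,|J_j|)$ one has $|I_j\cap J_j|=m_j$ and, because distinct cardinalities in the chain differ by a factor at least $p_j$, $M_j\geq p_jm_j$ whenever $I_j\neq J_j$. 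Thus $|I|=\prod_j|I_j|$, $|J|=\prod_j|J_j|$ and $|I\cap J|=\prod_j m_j$.

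Next I would isolate the coordinates that matter. Put $S=\{j:I_j\neq J_j\}$, which is nonempty since $I\neq J$. Factoring out the common coordinates gives $d(I,J)=P\cdot D_S$ with $P=\prod_{j\notin S}m_j\geq 1$ and $D_S=\prod_{j\in S}|I_j|+\prod_{j\in S}|J_j|-2\prod_{j\in S}m_j$, so it is enough to bound $D_S$. Since $|I_j||J_j|=m_jM_j$ for every $j$, the AM--GM inequality gives $\prod_{j\in S}|I_j|+\prod_{j\in S}|J_j|\geq 2\sqrt{\prod_{j\in S}m_jM_j}$; using $M_j\geq p_jm_j$ and writing $\mu=\prod_{j\in S}m_j\geq 1$, $\pi=\prod_{j\in S}p_j$, this yields $D_S\geq 2\mu(\sqrt{\pi}-1)\geq 2(\sqrt{\pi}-1)$.

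Finally I would finish by integrality. In case (i) every $p_j\geq 3$ forces $\pi\geq 3$, hence $D_S\geq 2(\sqrt{3}-1)>1$; as $D_S$ is a positive integer this gives $D_S\geq 2$, so $d(I,J)\geq 2$ and $\mathcal{C}_A$ is error detecting. In case (ii) every $p_j\geq 5$ forces $\pi\geq 5$, hence $D_S\geq 2(\sqrt{5}-1)>2$, so $D_S\geq 3$, giving $d(I,J)\geq 3$ and $\mathcal{C}_A$ error correcting. The delicate point is the case $|S|\geq 2$, where $d(I,J)$ no longer reduces to a single difference of cardinalities; the AM--GM step is exactly what keeps the lower bound uniform in $|S|$, and one checks it is sharp by taking $I=\{0\}$ and $J$ the minimal nonzero ideal supported on the factor with the smallest prime, which gives $d(I,J)=p_{\min}-1$.
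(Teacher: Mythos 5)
Your proof is correct, and it supplies an actual argument where the paper has essentially none: the paper's entire proof of Proposition \ref{Proposition 4.5} is the bare assertion that $d_H$ is at least $2$ when all $p_i\geq 3$ and at least $3$ when all $p_i\geq 5$. Your route --- identifying the Hamming distance with the symmetric difference $|I|+|J|-2|I\cap J|$, splitting ideals along the product decomposition, using the chain structure of $Id(\mathbb{Z}_{p^{\alpha}})$ to get $|I_j\cap J_j|=m_j$ and $M_j\geq p_jm_j$, and then AM--GM plus integrality to handle pairs differing in several coordinates --- is sound at every step, and your closing observation actually sharpens the statement: the minimum distance equals $p_{\min}-1$, so under hypothesis (ii) one even has $d_H\geq 4$. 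One simplification worth noting: the AM--GM step (the only delicate point, needed because mixing maxima and minima across coordinates breaks the naive termwise bound) can be avoided entirely. Set $K=I\cap J$ and let $a=[I:K]$, $b=[J:K]$ be the indices of $K$ as an additive subgroup of $I$ and of $J$; then $d(I,J)=|K|(a+b-2)$. Since $I\neq J$, not both indices are $1$, and any index $>1$ divides $|A|=\prod_i p_i^{\alpha_i}$, hence is at least $p_{\min}$. This gives $d(I,J)\geq p_{\min}-1$ in one line, without invoking the product decomposition of ideals at all, and it proves the proposition for an arbitrary finite commutative ring, not only those of the displayed form.
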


\begin{proof}
If $p_{i}\geq 3$, for all $i\in \{1,2,...,r\}$, we have that $d_{H}$ is
minimum $2$. If $p_{i}\geq 5$, for all $i\in \{1,2,...,r\}$, then $d_{H}$ is
minimum $3$.
\end{proof}

\begin{remark}
\label{Remark 4.6.} Let $W$ be a Wajsberg (MV or Boolean) algebra with$~%
\mathcal{N}_{A}=\overset{r}{\underset{i=1}{\prod }}\left( \alpha
_{i}+1\right) $ elements and $\mathcal{C}_{W}$ its attached binary block
code. There is a finite unitary commutative ring $A=\mathbb{Z}_{k_{1}}\times
\mathbb{Z}_{k_{2}}\times ...\times \mathbb{Z}_{k_{r}},$ where $%
k_{i}=p_{i}^{\alpha _{i}}$, $p_{i}$ a prime number, for all $i\in
\{1,2,...,r\}$, with lattice of ideals $Id\left( A\right) $ having $\mathcal{%
N}_{A}$ elements and its attached binary block code $\mathcal{C}_{A}$ such
that $\mathcal{C}_{W}$ is the reduced code of the code $\mathcal{C}_{A}$.
\end{remark}

\section{Generation of finite MV-algebras using finite commutative rings}

In this section, using the results obtained in Section 3, we present a way
to generate finite\ MV-algebras using finite commutative rings.

We recall that an MV-algebra is finite if and only if it is isomorphic to a
finite product of totally ordered MV-algebras, see [HR; 99].

Also, it is known that, on a finite totally ordered set there is only one
way to define an MV-algebra, see [FRT; 84]$.$

Now, let $n\geq 2$ be a natural number.

If we consider the decomposition of $n$ in factors greater than $1$, then
this decomposition is not unique. We denote by $\pi (n)$ the number of all
such decompositions.

We conclude that for every natural number $n\geq 2$ there are $\pi (n)$
non-isomorphic MV-algebras with $n$ elements which are obtained as a finite
product of totally ordered MV-algebras. Furthermore, there is only one
MV-algebra with $n$ elements which is a chain.

We deduce that:

\begin{proposition}
\label{Proposition 5.0} For every natural number $n\geq 2,$ there are $\pi
(n)+1$ non-isomorphic MV-algebras with $n$ elements.
\end{proposition}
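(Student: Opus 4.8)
The plan is to set up a bijection between isomorphism classes of MV-algebras having $n$ elements and the unordered factorizations of $n$ into factors greater than $1$, and then to count the latter. First I would combine the two structural facts recalled above: by [HR; 99] every finite MV-algebra $M$ is isomorphic to a product $C_{1}\times \cdots \times C_{k}$ of finite totally ordered MV-algebras, and by [FRT; 84] each finite chain $C_{i}$ is determined up to isomorphism by its cardinality $m_{i}=\left\vert C_{i}\right\vert \geq 2$. Since $\left\vert C_{1}\times \cdots \times C_{k}\right\vert =m_{1}\cdots m_{k}$, an MV-algebra with $n$ elements produces a factorization $n=m_{1}\cdots m_{k}$ with every $m_{i}\geq 2$, and conversely every such factorization yields an $n$-element MV-algebra that is a product of chains.

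The key step is to prove that the multiset $\{m_{1},\dots ,m_{k}\}$ is an isomorphism invariant of $M$, so that distinct factorizations give non-isomorphic algebras. For this I would first observe that a finite chain with at least two elements is directly indecomposable: its only complemented elements are $0$ and $1$, so its Boolean center is $\{0,1\}$ and it admits no nontrivial direct product decomposition. Since a finite MV-algebra decomposes uniquely, up to order and isomorphism, into directly indecomposable factors, any two isomorphic products of chains must carry the same multiset of factor cardinalities. This is the part I expect to be the main obstacle, because it is exactly here that one rules out accidental isomorphisms between products arising from different factorizations of the same $n$ (for instance, distinguishing a product $C_{1}\times C_{2}$ with $|C_{1}|\cdot |C_{2}|=n$ from a single chain of size $n$); everything before and after is bookkeeping.

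With this invariance established, the assignment $M\mapsto \{m_{1},\dots ,m_{k}\}$ is a bijection between isomorphism classes of $n$-element MV-algebras and unordered factorizations of $n$ into factors $\geq 2$. Finally I would split these factorizations according to the number of factors. The single-factor case $n=n$ corresponds to the unique chain with $n$ elements furnished by [FRT; 84], while the factorizations using at least two factors are, by definition, counted by $\pi (n)$. Adding the one chain to these $\pi (n)$ genuine products gives exactly $\pi (n)+1$ non-isomorphic MV-algebras with $n$ elements, as asserted.
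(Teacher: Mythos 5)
Your proof is correct and follows essentially the same route as the paper: decompose a finite MV-algebra into a product of totally ordered MV-algebras via [HR; 99], use [FRT; 84] to identify each chain up to isomorphism by its cardinality, and count the unordered factorizations of $n$ into factors $\geq 2$, with the single chain accounting for the extra $+1$. The only difference is that you explicitly justify why distinct factorizations yield non-isomorphic algebras --- via the direct indecomposability of nontrivial chains (Boolean center $\{0,1\}$) and the uniqueness of direct decomposition of finite MV-algebras --- a step the paper treats as immediate, so your write-up is a more rigorous rendering of the same argument rather than a different one.
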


\begin{example}
\label{Example 5.1} For $n=8,$ we have $8=2\cdot 4=2\cdot 2\cdot 2.$ Thus,
we have two types (up to an isomorphism) of MV-algebras with $8$ elements
obtained as a finite product of MV chains. In addition, we have only one
MV-algebra with $8$ elements which is a chain.Therefore, there are three
types of MV-algebras (up to an isomorphism) with $8$ elements.
\end{example}

\textbf{Table 1 }present a summary for the number of MV-algebras and Boolean
algebras with $n\leq 8$ elements:

\begin{equation*}
\text{\textbf{Table 1 }}
\end{equation*}

\medskip

\textbf{\ \ }%
\begin{tabular}{llllllll}
& $n=2$ & $n=3$ & $n=4$ & $n=5$ & $n=6$ & $n=7$ & $n=8$ \\
MV-alg & $1$ & $1$ & $2$ & $1$ & $2$ & $1$ & $3$ \\
Boole alg & $1$ & $-$ & $1$ & $-$ & $-$ & $-$ & $1$%
\end{tabular}%
. \medskip

An interesting thing is the relationship between the number of MV-algebras
with $n\geq 2$ elements and the number of MV-chains with $n$ elements (only
one) or the number of MV-algebras which are Boolean algebras (only one if $n$
is a power of $2$).

\medskip

Using Corollary \ref{Corollary 3.30}, if $A$ is a finite commutative ring,
its lattice of ideals is a Wajsberg (MV) algebra or a Wajsberg (Boolean)
algebra.

\textbf{Table 2 }present a basic summary for the structure of the lattice of
ideals in a finite and commutative ring $A$ with $2\leq n\leq 10$ elements:

\begin{equation*}
\text{\textbf{Table\ 2\ }}
\end{equation*}

\medskip

\ \
\begin{tabular}{lll}
$\left\vert A\right\vert \mathbf{=n}$ & $A$\textbf{\ is isomorphic to one of
the rings:} & $Id(A)$\textbf{\ is:} \\
$n=2$ & $Z_{2}$ & Boolean algebra \\
$n=3$ & $Z_{3}$ & Boolean algebra \\
$n=4$ & $Z_{4}$ & MV-algebra \\
& $Z_{2}\times Z_{2}$ & Boolean algebra \\
$n=5$ & $Z_{5}$ & Boolean algebra \\
$n=6$ & $Z_{6}\simeq Z_{3}\times Z_{2}$ & Boolean algebra \\
$n=7$ & $Z_{7}$ & Boolean algebra \\
$n=8$ & $Z_{8}$ & MV-algebra \\
& $Z_{4}\times Z_{2}$ & MV-algebra \\
& $Z_{2}\times Z_{2}\times Z_{2}$ & Boolean algebra \\
$n=9$ & $Z_{9}$ & MV-algebra \\
& $Z_{3}\times Z_{3}$ & Boolean algebra \\
$n=10$ & $Z_{10}\simeq Z_{5}\times Z_{2}$ & Boolean algebra%
\end{tabular}

\medskip

One interesting thing is that for any finite commutative ring, just for one
representation of this ring, the lattice of ideals is a Boolean algebra. In
all other cases, this lattice is an MV-algebra which is not a Boolean
algebra.

Thus, in order to generate all MV-algebras with $n\geq 2$ elements it is
suffices to find finite commutative rings with $n$ ideals.

\begin{remark}
\label{Remark 5.1} All finite MV-algebras (up to an isomorphism) with $n\geq
2$ elements correspond to finite commutative rings $A$ in which $\left\vert
Id(A)\right\vert =n.$
\end{remark}

\begin{lemma}
\label{Lemma 5.2} If $p\geq 2$ is a prime number and $k\geq 1$ is a natural
number, then $(Id(Z_{p^{k}}),\cap ,+,\otimes ,\rightarrow
,0=\{0\},1=Z_{p^{k}})$ is the only MV-chain (up to an isomorphism) with $k+1$
elements.
\end{lemma}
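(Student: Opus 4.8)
The plan is to describe the ideals of $\mathbb{Z}_{p^{k}}$ explicitly, observe that they form a chain with $k+1$ elements, invoke the earlier results to conclude that this chain carries an MV-algebra structure, and then appeal to the uniqueness of the MV-structure on a finite chain to obtain the ``only MV-chain up to isomorphism'' claim. First I would determine $Id(\mathbb{Z}_{p^{k}})$. Since $\mathbb{Z}_{p^{k}}=\mathbb{Z}/(p^{k})$ is a factor ring of the principal ideal domain $\mathbb{Z}$, every ideal is principal and corresponds to a divisor of $p^{k}$; concretely the ideals are $<\widehat{p^{j}}>$ for $j\in\{0,1,\dots,k\}$, with $<\widehat{p^{0}}>=\mathbb{Z}_{p^{k}}$ and $<\widehat{p^{k}}>=\{0\}$. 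Because $<\widehat{p^{j}}>\subseteq<\widehat{p^{i}}>$ exactly when $i\leq j$, these ideals are linearly ordered by inclusion, giving the chain $\{0\}\subsetneq<\widehat{p^{k-1}}>\subsetneq\cdots\subsetneq<\widehat{p}>\subsetneq\mathbb{Z}_{p^{k}}$. This shows that $(Id(\mathbb{Z}_{p^{k}}),\subseteq)$ is totally ordered with exactly $k+1$ elements, consistent with Remark \ref{Remark 3.14}, which gives $\mathcal{N}_{A}=\alpha_{1}+1=k+1$.

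Next I would establish the MV-algebra structure. Because $\mathbb{Z}_{p^{k}}$ is a ring factor of the principal ideal domain $\mathbb{Z}$, Theorem \ref{Theorem 3.10} gives that it has Chang property (alternatively, Corollary \ref{Corollary 3.13} applies since the ring is finite), and then Theorem \ref{Theorem 3.4} yields that $Id(\mathbb{Z}_{p^{k}})$ is an MV-algebra. Combined with the previous step, $(Id(\mathbb{Z}_{p^{k}}),\cap,+,\otimes,\rightarrow,0=\{0\},1=\mathbb{Z}_{p^{k}})$ is an MV-algebra whose underlying lattice is a chain, that is, an MV-chain with $k+1$ elements.

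Finally, for the uniqueness I would invoke the fact recalled at the beginning of Section 5 (from [FRT; 84]) that on a finite totally ordered set there is exactly one way to define an MV-algebra. Hence any MV-chain with $k+1$ elements is isomorphic to $Id(\mathbb{Z}_{p^{k}})$, which proves that it is the only such MV-chain up to isomorphism. The main obstacle is really only the bookkeeping in the first step: verifying that the complete list of ideals of $\mathbb{Z}_{p^{k}}$ is exactly $\{<\widehat{p^{j}}>:0\leq j\leq k\}$ and that they are totally ordered by inclusion. Everything after that is a direct application of results already proved (Theorems \ref{Theorem 3.4} and \ref{Theorem 3.10}, or Corollary \ref{Corollary 3.13}) together with the cited classification of finite MV-chains, so no genuinely new difficulty arises.
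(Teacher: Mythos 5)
Your proposal is correct and follows essentially the same route as the paper: list the ideals $<\widehat{p^{j}}>$, $0\leq j\leq k$, observe they form a chain, invoke the finite-ring results (the paper uses Corollary \ref{Corollary 3.30}, you use Theorem \ref{Theorem 3.10}/Corollary \ref{Corollary 3.13} with Theorem \ref{Theorem 3.4}, which amounts to the same thing) to get the MV-structure, and appeal to the uniqueness of the MV-algebra structure on a finite chain from [FRT; 84]. The only difference is cosmetic: the paper additionally writes out the operations $\rightarrow$, $Ann$ and $\oplus$ explicitly (used in later examples), whereas you make the uniqueness step via [FRT; 84] explicit, which the paper leaves implicit in the Section 5 preamble.
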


\begin{proof}
Obviously, the ring $(Z_{p^{k}},+,\cdot )$ has $k+1$ ideals: $I_{0}=\{0\},$ $%
I_{1}=\widehat{p^{k-1}}Z_{p^{k}},$ ..., $I_{k-2}=\widehat{p^{2}}Z_{p^{k}},$ $%
I_{k-1}=\widehat{p}Z_{p^{k}},$ $I_{k}=Z_{p^{k}}.$ Since $I_{0}\subseteq
I_{1}\subseteq I_{2}\subseteq ...\subseteq I_{k},$ using Corollary \ref%
{Corollary 3.30}, $Id(Z_{p^{k}})$ is an MV-chain. Moreover, for every $%
i,j\in \{0,...,k\}$ we have $I_{i}\rightarrow I_{j}=Z_{p^{k}}$ if $i\leq j$
and $I_{k-i+j}$ otherwise. Also, $I_{i}^{\ast }=Ann(I_{i})=$ $I_{k-i}$ for
every $i\in \{0,...,k\}.$ Moreover, for every $i,j\in \{0,...,k\},$
\begin{equation*}
I_{i}\oplus I_{j}=Ann(I_{i})\rightarrow I_{j}=I_{k-i}\rightarrow
I_{j}=Z_{p^{k}}\text{ if }k\leq i+j\text{ and }I_{i+j}\text{ otherwise.}
\end{equation*}
\end{proof}

\begin{example}
We generate all finite MV-algebras $M$ with $n=6=2\cdot 3$ elements (up to
an isomorphism). \ Thus, we have $\pi (6)+1=2$ types of non-isomorphic
MV-algebras: \ one MV-algebra is an MV-chain (for example, $Id(Z_{32})$
which has $6$ ideals, since $32=2^{5},$ see Lemma \ref{Lemma 5.2}) and one
MV-algebra is a product of \ MV-chains (for example, $Id(Z_{2}\times Z_{4})$
which has $2\cdot 3=6$ ideals).

\textbf{Case 1.} In $(\mathbb{Z}_{32},+,\cdot )$ the lattice of ideals has $%
6 $ elements: \newline
$Id\left( \mathbb{Z}_{32}\right) $=$\{I_{0}=\{\widehat{0}\}$, $\ I_{1}=%
\widehat{16}\mathbb{Z}_{32}=\{\widehat{0},\widehat{16}\},$ $I_{2}=\widehat{8}%
\mathbb{Z}_{32}=\{\widehat{0},\widehat{8},\widehat{16},\widehat{24}\},$ $%
I_{3}=\widehat{4}\mathbb{Z}_{32}=\{\widehat{0},\widehat{4},\widehat{8},...,%
\widehat{28}\},$ $I_{4}=\widehat{2}\mathbb{Z}_{32}=\{\widehat{0},\widehat{2},%
\widehat{4},...,\widehat{30}\}$, $,$ $I_{5}=\mathbb{Z}_{32}\}$. Using
Corollary \ref{Corollary 3.23}, since $I_{1}^{2}=I_{0},$ the obtained
lattice is an MV chain $(M,\oplus ,^{\ast },O)$ \ with $M=\{O,B,D,R,C,E\}$
in which $O\leq B\leq $ $D\leq R\leq C\leq E$ and the addition table is
given below:%
\begin{equation*}
\begin{tabular}{l|llllll}
$\oplus $ & $O$ & $R$ & $B$ & $C$ & $D$ & $E$ \\ \hline
$O$ & $O$ & $R$ & $B$ & $C$ & $D$ & $E$ \\
$R$ & $R$ & $E$ & $C$ & $E$ & $E$ & $E$ \\
$B$ & $B$ & $C$ & $D$ & $E$ & $R$ & $E$ \\
$C$ & $C$ & $E$ & $E$ & $E$ & $E$ & $E$ \\
$D$ & $D$ & $E$ & $R$ & $E$ & $C$ & $E$ \\
$E$ & $E$ & $E$ & $E$ & $E$ & $E$ & $E$%
\end{tabular}%
.
\end{equation*}%
Using Lemma \ref{Lemma 5.2}, in this MV-algebra, $O^{\ast }=E$, $E^{\ast }=O$%
, $R^{\ast }=D$, $B^{\ast }=C$, $D^{\ast }=R$ and $C^{\ast }=B$. \

\textbf{Case 2.} In $\mathbb{Z}_{2}\times \mathbb{Z}_{4}$ the lattice of
ideals has $6$ elements, see Example \ref{Example 4.2}, Case 3 (ii). \newline
Using Corollary \ref{Corollary 3.30}, we obtain an MV algebra $(M,\oplus
,^{\ast },O)$ \ with $M=\{O,B,D,R,C,E\}$ in which $O\leq R,C\leq $ $D\leq
E,O\leq R\leq B\leq E$ and $C,R$ respective $D,B$ are incomparable.

The addition table is given below:%
\begin{equation*}
\begin{tabular}{l|llllll}
$\oplus $ & $O$ & $R$ & $B$ & $C$ & $D$ & $E$ \\ \hline
$O$ & $O$ & $R$ & $B$ & $C$ & $D$ & $E$ \\
$R$ & $R$ & $B$ & $B$ & $D$ & $E$ & $E$ \\
$B$ & $B$ & $B$ & $B$ & $E$ & $E$ & $E$ \\
$C$ & $C$ & $D$ & $E$ & $C$ & $D$ & $E$ \\
$D$ & $D$ & $E$ & $E$ & $D$ & $E$ & $E$ \\
$E$ & $E$ & $E$ & $E$ & $E$ & $E$ & $E$%
\end{tabular}%
.
\end{equation*}%
Since $^{\ast }=Ann$ $\ $\ in this $MV-$algebra, $O^{\ast }=E$, $E^{\ast }=O
$, $R^{\ast }=D$, $B^{\ast }=C$, $D^{\ast }=R$ and $C^{\ast }=B.$
\end{example}

In the following, in \textbf{Table 3,} using Corollaries \ref{Corollary 3.23}%
, \ref{Corollary 3.30} and Lemma \ref{Lemma 5.2}, we sum briefly describe a
way to generate finite\ MV-algebras with $2\leq n\leq 8$ elements.

In Table 3, the first column contains the number $n$ of elements of the
finite MV-algebra $M.$ The second column corresponds to the number of
MV-algebras with $n$ elements. The third column contains the rings $A$ which
generates these MV-algebras and the subvariety of MV algebras to which the
lattice of ideals $Id(A)$ belongs. \ In this column, $p\geq 2$ is a prime
number. \medskip

\begin{equation*}
\text{\textbf{Table 3:}}
\end{equation*}

\textbf{\ }%
\begin{tabular}{lll}
$\left\vert M\right\vert \mathbf{=n}$ & \textbf{Nr of MV} & \textbf{Rings
which generates MV} \\
$n=2$ & $1$ & $\mathbb{Z}_{p}$ (Boole chain) \\
$n=3$ & $1$ & $\mathbb{Z}_{p^{2}}$ (MV chain) \\
$n=4$ & $2$ & $\mathbb{Z}_{p^{3}}$ (MV chain) and $\mathbb{Z}_{p}\times
\mathbb{Z}_{p}$ (Boole) \\
$n=5$ & $1$ & $\mathbb{Z}_{p^{4}}$ (MV chain) \\
$n=6$ & $2$ & $\mathbb{Z}_{p^{5}}$ (MV chain) and $\mathbb{Z}_{p}\times
\mathbb{Z}_{p^{2}}$ (MV) \\
$n=7$ & $1$ & $\mathbb{Z}_{p^{6}}$ (MV chain) \\
$n=8$ & $3$ & $\mathbb{Z}_{p^{7}}$ (MV chain) and $\mathbb{Z}_{p}\times
\mathbb{Z}_{p^{3}}$ (MV) and $\mathbb{Z}_{p}\times \mathbb{Z}_{p}\times
\mathbb{Z}_{p}$ (Boole)%
\end{tabular}

\begin{equation*}
\end{equation*}

\textbf{Conclusions.} MV-algebras are algebraic structures corresponding to

\L ukasiewicz $\infty -$ valued propositional logic. In this paper using the
connections between some subvarieties of residuated lattices, we present a
way to generate all (up to an isomorphism) finite MV-algebras using rings.
For this, we investigated some properties of the lattice of ideals in
commutative rings and we show that, for finite rings of the form 
$A=\mathbb{Z}_{k_{1}}\times \mathbb{Z}_{k_{2}}\times ...\times \mathbb{Z}%
_{k_{r}},$ where $k_{i}=p_{i}^{\alpha _{i}}$ and $p_{i}$ a prime number, for
all $i\in \{1,2,...,r\}$, this lattice is a Boolean algebra or an MV-algebra
(which is not Boolean). As a further research, we intend to continue this
study by extended it to other classes of logical algebras. Also we will
investigate the codes attached to such structures, since we can have here
two directions of research. First direction, by studying of some logical
algebras, we will try to find when their attached codes are good and
performing. Logical algebras can be a way to define codes. We are looking
for those structures which give us codes with good parameters. For the first
time we find such a structure, attached to a logical algebra, on which we
can find codes with Hamming distance greater that $3$ (see Proposition \ref%
{Proposition 4.5} ). The second direction is the reverse of the first
direction. The study of the codes can give us new properties and
applications of logical algebras.

\begin{equation*}
\end{equation*}

\textbf{Acknowledgments.} The authors thank the referees for their
suggestions and remarks which helped us to improve this paper.

\begin{equation*}
\end{equation*}%
\textbf{References}%
\begin{equation*}
\end{equation*}

[AAT; 96] Abujabal, H.A.S., Aslam, M., Thaheem, A.B., \textit{A
representation of bounded commutative BCK-algebras}, Internat. J. Math. \&
Math. Sci., 19(4)(1996), 733-736.

[BD; 74] Balbes, R., Dwinger, P., \textit{Distributive lattices}, Columbia,
Missouri: University of Missouri Press. XIII, 1974.

[BN; 09] Belluce, L.P., Di Nola, A., \textit{Commutative rings whose ideals
form an MV-algebra}, Math. Log. Quart., \textbf{55} (5) (2009), 468-486.

[BNM; 10] Belluce, L.P., Di Nola, A., Marchioni, E., (2010) \textit{Rings
and G\"{o}del algebras}. Algebra Univ 64(1--2) (2010), 103--116.

[Bl; 53] Blair, R.L, \textit{Ideal lattices and the structure of rings},
Trans. Am. Math. Soc., 75(1953), 136--153.

[BT; 03] \ Blount, K., Tsinakis, C., \textit{The structure of residuated
lattices}, Internat. J. Algebra Comput., \textbf{13} (4) (2003), 437-461.

[BP; 02] Busneag, D., Piciu, D., \textit{Lectii de algebra}, Ed.
Universitaria, Craiova, 2002.

[COM; 00] Cignoli, R.; D'Ottaviano, I.M.L.; Mundici, D. \textit{Algebraic
Foundations of many-valued Reasoning}. Trends in Logic-Studia Logica Library
7, Dordrecht: Kluwer Acad. Publ.\textbf{\ 2000}.\medskip

[CL; 19] Chajda, I., L\"{a}nger, H., \textit{Commutative rings whose ideal
lattices are complemented,} Asian-European J Math 3:1950039 (2019).

[CHA; 58] Chang, C.C.,\textit{\ Algebraic analysis of many-valued logic},
Trans. Amer. Math. Soc. 88(1958), 467-490.

[Di; 38] Dilworth, R.P., \textit{Abstract residuation over lattices}, Bull.
Am. Math. Soc. 44(1938), 262--268.

[FHSV; 20] Flaut, C., Hoskova-Mayerova, S., Saeid, A.B., Vasile, R., \textit{%
Wajsberg algebras of} \textit{order} $n(n\leq 9)$, Neural Computing and
Applications, 32(2020), 13301--13312.

[FV; 20] Flaut, C., Vasile, R., \textit{Wajsberg algebras arising from
binary block codes}, Soft Computing, 24(2020), 6047-6058.

[FRT; 84] Font, J., M., Rodriguez, A., J., Torrens, A., \textit{Wajsberg
Algebras}, Stochastica, 8(1) (1984), 5-30.

[GDCK; 10] Van Gasse, B., Deschrijver, G., Cornelis, C., Kerre, E., \textit{%
Filters of residuated lattices and triangle algebras}, Inf. Sci., \textbf{180%
} (16) (2010), 3006-3020.

[HE; 75] Hernstein, I.N.,\textit{\ Topics in algebra}, 2end edition, John
Wiley\&Son, New York, 1975.

[HR; 99] H\H{o}hle, U., Rodabaugh, S. E., Mathematics of fuzzy sets:logic,
topology and measure theory, Springer, Berlin, 1999.

[I; 09] Iorgulescu, A., \textit{Algebras of logic as BCK algebras, }A.S.E.,
Bucharest, 2009.

[Li; 99] Lint J.H., Introduction to Coding Theory, third edition, Graduate
Texts in Mathematics, 86, Springer Verlag, Berlin, 1999.

[Me-Ju; 94] Meng, J., Jun, Y. B., \textit{BCK-algebras}, Kyung Moon Sa Co.
Seoul, Korea, 1994.

[Mu; 07] Mundici, D., \textit{MV-algebras-a short tutorial}, Department of
Mathematics \textit{Ulisse Dini}, University of Florence, 2007.

[Pi; 07] Piciu, D., \textit{Algebras of Fuzzy Logic}, Editura Universitaria,
Craiova, 2007.

[TT; 22] Tchoffo Foka, S. V., Tonga, M., \textit{Rings and residuated
lattices whose fuzzy ideals form a Boolean algebra}, Soft Comput., 26 (2022)
535-539.

[T; 99] Turunen, E., \textit{Mathematics Behind Fuzzy Logic}.
Physica-Verlag, \textbf{1999}.

[WD; 39] Ward, M., Dilworth, R.P., \textit{Residuated lattices}, Trans. Am.
Math. Soc. 45(1939), 335--354.

\begin{equation*}
\end{equation*}

Cristina Flaut

{\small Faculty of Mathematics and Computer Science, Ovidius University,}

{\small Bd. Mamaia 124, 900527, Constan\c{t}a, Rom\^{a}nia,}

{\small \ http://www.univ-ovidius.ro/math/}

{\small e-mail: cflaut@univ-ovidius.ro; cristina\_flaut@yahoo.com}

\bigskip

Dana Piciu

{\small Faculty of \ Science, University of Craiova, }

{\small A.I. Cuza Street, 13, 200585, Craiova, Romania,}

{\small http://www.math.ucv.ro/dep\_mate/}

{\small e-mail: dana.piciu@edu.ucv.ro, piciudanamarina@yahoo.com}

\end{document}